\newcommand{\chapter}{\section}
\newtheorem{theorem}{Theorem}[section]
\newtheorem{proposition}[theorem]{Proposition}
\newtheorem{lemma}[theorem]{Lemma}
\theoremstyle{definition}
\newtheorem{definition}[theorem]{Definition}
\theoremstyle{remark}
\newtheorem{remark}[theorem]{Remark}
\newtheorem{remarks}[theorem]{Remarks}
\numberwithin{equation}{section}
\newcommand{\CH}{{\rm CH}}
\newcommand{\Spec}{{\rm Spec\,}}
\newcommand{\0}{\emptyset}
\newcommand{\sB}{{\mathcal B}}
\newcommand{\sC}{{\mathcal C}}
\newcommand{\sD}{{\mathcal D}}
\newcommand{\sE}{{\mathcal E}}
\newcommand{\sF}{{\mathcal F}}
\newcommand{\sI}{{\mathcal I}}
\newcommand{\sL}{{\mathcal L}}
\newcommand{\sM}{{\mathcal M}}
\newcommand{\sO}{{\mathcal O}}
\newcommand{\sR}{{\mathcal R}}
\newcommand{\sZ}{{\mathcal Z}}
\newcommand{\A}{{\mathbb A}}
\renewcommand{\L}{{\mathbb L}}
\renewcommand{\P}{{\mathbb P}}
\newcommand{\Z}{{\mathbb Z}}
\newcommand{\id}{{\operatorname{\rm Id}}}
\newcommand{\Sch}{{\operatorname{\mathbf{Sch}}}}
\newcommand{\<}{\langle}
\renewcommand{\>}{\rangle}
\renewcommand{\dim}{{\operatorname{\rm dim}}} 
\newcommand{\Div}{{\operatorname{div}}}
\newcommand{\uu}{\underline}
\newcommand{\oo}{\otimes}
\newcommand{\Sm}{{\mathbf{Sm}}}
\newcommand{\Proj}{{\operatorname{Proj}}}
\newcommand{\cn}{{\tilde{c}_1}}
\newcommand{\Sym}{{\operatorname{Sym}}}
\newcommand{\Om}{{\Omega}}
\newcommand{\lci}{l.c.i.\ }
\newcommand{\can}{\text{can}}
\newcommand{\ind}[1]{}
\newcommand{\inp}[1]{}
\newcommand{\res}{\text{res}}
\newcommand{\un}[1]{\underline{#1}}
\newcommand{\+}{\widehat{+}}
\begin{document}
\setcounter{tocdepth}{1}

\title{Intersection theory in algebraic cobordism}

\author[M.~Levine]{Marc~Levine}
\address{Universit\"at Duisburg-Essen,
Fakult\"at Mathematik, Campus Essen, 45117 Essen, Germany}
\email{marc.levine@uni-due.de}

\begin{abstract}
We give a  more detailed construction of the operation ``intersection with a pseudo-divisor'' in algebraic cobordism. Using arguments in \cite[\S 6.2, 6.3]{CobordismBook}, this gives a new proof of the contra variant functoriality of algebraic cobordism for l.c.i. morphisms of schemes of finite type over a field of characteristic zero.
\end{abstract}

\subjclass[2010]{Primary
14E15 %% Global theory and resolution of singularity
Secondary 55N22. %% Cobordism and formal group law
}

\maketitle

\tableofcontents
\section*{Introduction}
One important aspect of algebraic cobordism, as constructed in \cite{CobordismBook}, is the existence of a good theory of pull-back maps for arbitrary \lci morphisms (always working over a field of characteristic zero). The key ingredient which makes this possible is the {\em intersection map}
\[
D(-):\Omega_*(X)\to \Omega_{*-1}(|D|)
\]
for an effective Cartier divisor (more generally a {\em pseudo-divisor}) $D$  with support  $|D|$ on a finite type $k$-scheme $X$. The construction of the intersection map in  \cite[\S 6.2, 6.3]{CobordismBook} is accomplished with the help of a  refined cobordism group $\Omega_*(X)_D$, which  admits an explicitly defined intersection map
\[
D(-)_D:\Omega_*(X)_D\to \Omega_{*-1}(|D|)
\]
and   a ``forgetful map'' $\Omega_*(X)_D\to \Omega_*(X)$. The forgetful map is shown to be an isomorphism \cite[theorem 6.4.12]{CobordismBook}; the map $D(-)$ is defined by composing $D(-)_D$ with the inverse of the forgetful map. Auxiliary refinements $\Omega_*(X)_{D|D'}$ of $\Omega_*(X)$ are also defined, and are used to prove properties of the intersection map, most importantly, the commutativity property $D(D'(x))=D'(D(x))$ in $\Omega_{*-2}(|D|\cap |D'|)$ for $x\in\Omega_*(X)$ and pseudo-divisors $D$, $D'$ on $X$.

This approach has been criticized, as it is not proven (nor is it necessary for the construction) that the auxiliary groups $\Omega_*(X)_{D|D'}$ are themselves isomorphic to $\Omega_*(X)$. Fulton has suggested that one should be able to define a series of refined groups $\Omega_*(X)_{D_1,\ldots, D_n}$ for pseudo-divisors $D_1,\ldots, D_n$ on $X$, related by intersection maps and forgetful maps, so that all forgetful maps are isomorphisms, and giving rise to all necessary relations among the intersection maps. 

Carrying out such a program is the purpose of this paper. We define groups $\Omega_*(X)_{D_1,\ldots, D_n}$, with intersection maps
\[
D(-)_{D_*}:\Omega_*(X)_{D, D_1,\ldots, D_n}\to \Omega_{*-1}(|D|)_{D_1,\ldots, D_n}
\]
and forgetful maps
\[
\res_{D_*, D/D_*}:\Omega_*(X)_{D_1,\ldots, D_n, D}\to \Omega_*(X)_{D_1,\ldots, D_n}.
\]
We show that the forgetful maps are isomorphisms and that the resulting intersection maps
\[
D(-):\Omega_*(X)\to \Omega_{*-1}(|D|)
\]
have all the properties needed to give rise to pullback maps in algebraic cobordism for \lci morphisms via the technique of deformation to the normal bundle.  The groups $\Omega_*(X)_D$ (for a single pseudo-divisor) and the intersection map $D(-)_D:\Omega_*(X)_D\to \Omega_{*-1}(|D|)$ are exactly the same as those defined in \cite{CobordismBook}. The passage from the intersection maps to the \lci pullbacks is not discussed here;  the arguments and constructions of \cite[\S6.5, \S6.6]{CobordismBook}, relying on the properties of the intersection map detailed in proposition~\ref{prop:IntersectionProperties}, will do that. 

The requirement that  each forgetful map should be an isomorphism has led to a definition of the groups $\Omega_*(X)_{D_1,\ldots, D_n}$ that for $n=2$ differs from the groups $\Omega_*(X)_{D_1|D_2}$ defined in \cite{CobordismBook}. We hope that the more uniform approach pursued here has made the arguments more transparent and natural.

We fix a base-field $k$,  let $\Sch_k$ denote the category of separated finite type $k$-schemes and let $\Sm_k$ denote the full subcategory of smooth quasi-projective $k$-schemes. We often drop the qualifier ``separated'' and refer to an object of $\Sch_k$  as a finite type $k$-scheme. Although many of the constructions do not require resolution of singularities, or characteristic zero, we will assume that $k$ has characteristic zero. 
It would be possible to work over a perfect field of positive characteristic, assuming resolution of singularities, but we have preferred to avoid this (at present illusory) generality as it allows us to shorten the argument for lemma~\ref{lem:StrongDim} by using Bertini's theorem. The corresponding result \cite[lemma 6.1.13]{CobordismBook} was proven without the use of Bertini's theorem and a similar proof would work in this setting. 

We will be using only the following properties of ``resolution of singularities over $k$'': The field $k$ should be perfect. For each integral $X\in \Sch_k$, there is a projective birational map $q:Y\to X$ with $Y$ in $\Sm_k$ such that $q$ is an isomorphism over the smooth locus in $X$. In addition, if $D$ is an effective Cartier divisor on $X$, whose restriction to a smooth open subscheme $U\subset X$ is a simple normal crossing divisor, there is a projective birational map $q:Y\to X$ with $Y$ in $\Sm_k$ such that $q$ is an isomorphism over $U$,  and  $q^*(D)+E$ is a simple normal crossing divisor on $Y$, where $E$ is the exceptional locus of $q$. For proofs of these facts over a field of characteristic zero, we refer the reader to \cite{BM, Cut, EV, Kollar}.

In section~\ref{sec:DivI} we define the refined cobordism groups $\Omega_*(X)_{D_1,\ldots, D_n}$. We introduce the notion of admissibility of an effective simple normal crossing divisor $E$ with respect to $D_*:=D_1,\ldots, D_n$ and define the divisor class $[E\to |E|]_{D_*}\in \Omega_*(|E|)_{D_*}$ for an admissible  simple normal crossing divisor $E$; these classes will play a central role in our definition of the intersection map.  The groups $\Omega_*(X)_{D_*}$ admit 1st Chern class operators for line bundles on $X$ and we discuss some basic properties of these operators and their relation with the divisor classes. In section~\ref{sec:Intersection} we construct the intersection map on generators for   $\Omega_*(X)_{D_*}$ and derive some of its important properties. In section~\ref{sec:Descent}, we show that intersection map descends to a map on $\Omega_*(X)_{D_*}$. We establish the relation of commutativity of intersection maps in section~\ref{sec:Relations} and show  under certain technical conditions the equality of intersection with linearly equivalent divisors. We use these relations and a construction of explicit ``distinguished liftings'' to prove that the forgetful maps are isomorphisms, theorem~\ref{thm:Moving}, in section~\ref{sec:MovLem}. We conclude in section~\ref{sec:IntersectionMap} with the definition of the intersection map on $\Omega_*(-)$ and a proof of its main properties. 

We wish to mention explicitly, that although we refer to our  theorem~\ref{thm:Moving}, stating that the forgetful maps are isomorphisms,  as a `moving lemma', the arguments rely for their algebro-geometric input on resolution of singularities. Techniques using projecting cones or any of the other aspects of the classical Chow's moving lemma or its more modern extension to moving lemmas for Bloch's cycle complexes do not appear. If one would return to the Chow groups via the isomorphism $\Omega^*\otimes_{\L}\Z\cong \CH^*$ of \cite[Theorem 4.5.1]{CobordismBook}, one recovers Fulton's definition of intersection with a pseudo-divisor, and our results say nothing about the earlier proofs of the contravariant functoriality of the Chow groups of smooth varieties that rely on Chow's moving lemma. The added difficulty here is due to the fact that the generators of algebraic cobordism are smooth varieties rather than integral closed subschemes and one needs a modified version of intersection with a pseudo-divisor that takes this into account. 

\section{Refined cobordism}\label{sec:DivI}  

\subsection{Pseudo-divisors}\ind{pseudo-divisor} Let $X$ be a finite
type $k$-scheme. Following Fulton \cite{Fulton}, a {\em pseudo-divisor} $D$ on $X$ is\ind{Fulton, William}
a triple $D:=(Z, \sL, s)$, where $Z\subset X$ is a closed subset, $\sL$ is an invertible sheaf on $X$, and $s$ is a section of $\sL$ on $X$, such that the subscheme $s=0$ has support contained in $Z$;  one identifies triples $(Z, \sL, s)$, $(Z, \sL', s')$ if there is an isomorphism $\phi:\sL\to \sL'$ with $s'=\phi(s)$. In particular, having fixed $\sL$, the section $s$ is determined exactly up to a global unit on $X$. If we have a morphism $f:Y\to X$, we define $f^*(Z,\sL,s):=(f^{-1}(Z), f^*\sL,f^*s)$; clearly $(fg)^*(D)=g^*(f^*D)$ for a pseudo-divisor $D$. Also, an effective Cartier divisor $D$ on $X$ uniquely determines a pseudo-divisor $(|D|,\sO_X(D), s_D)$, where
$s_D:\sO_X\to\sO_X(D)$ is the canonical section and $|D|$ is the support of $D$.

We call $Z$ the {\em support}  of a pseudo-divisor $D:=(Z,\sL,s)$, and write
$Z=|D|$.   We let $\Div D$ denote the subscheme $s=0$, and write $\sO_X(D)$ for $\sL$. If $X$ is in $\Sm_k$, if $|D|=|\Div D|$ and if this subset has pure codimension one on $X$, then we identify $D$ with the Cartier divisor $\Div D$.

We will not be needing the full flexibility of all pseudo-divisors. In this paper, we will always assume that the support $Z$ of a pseudo-divisor $D:=(Z,\sL,s)$ is given by the closed subset $s=0$. Thus,  a pseudo-divisor on a smooth irreducible $Y$ is either  $(|D|, \sO_Y(D), s_D)$ for an effective Cartier divisor $D$ on $Y$, or $(Y, \sL, 0)$ for some invertible sheaf $\sL$ on $Y$. 

The zero pseudo-divisor is $(\0,\sO_X,1)$. If we have pseudo-divisors $D=(Z,\sL,s)$ and $D'=(Z',\sL',s')$, define $D+D'=(Z\cup Z',\sL\otimes\sL',s\otimes s')$. A pseudo-divisor $C$ is a {\em sub-pseudo-divisor} of a pseudo-divisor $D$ if there is an effective Cartier divisor $E$ with $C+E=D$. We say that $C$ is {\em supported in $D$} if $C$ is a sub-pseudo-divisor of $mD$ for some integer $m\ge1$. If $f:Y\to X$ is a morphism of finite type $k$-schemes and $C$ and $D$ are pseudo-divisors on $X$ with $C$ supported in $D$, then $|f^*C|\subset |f^*D|$ and, if $f^*D$ is a Cartier divisor on $Y$, necessarily effective, then $f^*C$ is also an effective  Cartier divisor on $Y$, with support contained in  $|f^*D|=|\Div f^*D|$.

\subsection{Refined cobordism cycles} 
Let $X$ be a finite type $k$-scheme and $D_1$,$\ldots$, $D_r$  pseudo-divisors on $X$. We proceed to define a series of groups
\[
\sZ_*(X)_{D_*}\to\uu{\sZ}_*(X)_{D_*}\to\uu{\Om}_*(X)_{D_*}\to\L_*\oo\uu{\Om}_*(X)_{D_*}
\to\Omega_*(X)_{D_*}
\]
analogous to the sequence
\[
\sZ_*(X)\to\uu{\sZ}_*(X)\to\uu{\Om}_*(X)\to\L_*\oo\uu{\Om}_*(X)
\to\Omega_*(X)
\]
used to define $\Omega_*(X)$ in  \cite[\S2.4]{CobordismBook}; in this section we construct the group $\sZ_*(X)_{D_*}$.

Let  $E=\sum_{i=1}^mn_iE_i$ be an effective simple normal crossing on a scheme $W\in\Sm_k$  with irreducible components $E_1,\ldots, E_m$. For each $J\in \{0, 1\}^m$, $J=(j_1,\ldots, j_m)$, we have the {\em  face}
\[
E^J:=\cap_{j_i=1}E_i,
\]
which is smooth over $k$ and has codimension $|J|:=\sum_ij_i$ on $W$.   All Cartier divisors we will be using here will be effective and will often refer to an effective simple normal crossing divisor as a simple normal crossing divisor.

Let $X$ be a finite type $k$-scheme. We recall from \cite[\S 2.1.2]{CobordismBook} the set $\sM(X)$ of isomorphism classes of projective morphisms $f:Y\to X$, with $Y$ in $\Sm_k$ (where ``isomorphism" means isomorphism over $X$). $\sM(X)$ is a monoid under disjoint union and is the free monoid on the isomorphism classes of $f:Y\to X$ with $Y$ irreducible.

Let $\sM(X)_D$ be the submonoid of $\sM(X)$ generated by $f:Y\to X$, with $Y$ irreducible, and with either $f(Y)\subset|D|$, or with $\Div f^*D$ a simple normal crossing divisor on $Y$.  We extend this construction as follows.

\begin{definition} \label{def:AdmissibleMaps}Let $X$ be a finite type $k$-scheme, and let $D_1,\ldots, D_r$ be pseudo-divisors on $X$.\\
i) Let $f:Y\to X$ be a morphism, with $Y$ irreducible. Let $I_f=\{i\ |\ |f^*D_i|\neq Y\}$.  If $I_f\neq\0$, let $i_0$ be the smallest $i\in I_f$ and call $D_{i_0}$ the  {\em leading} pseudo-divisor for $f$.   If $I_f=\0$, we say that   $f$ has no leading pseudo-divisor.  \\[5pt]
ii) Let $f:Y\to X$ be a morphism with $Y\in\Sm_k$. Suppose $Y$ is irreducible. We say $f$ that is {\em admissible} with respect to $D_1,\ldots, D_r$ if  for each $s$, $1\le s\le r$, the scheme-theoretic intersection $\cap_{i=1}^s\Div f^*D_i$ is either $Y$, or is empty, or is a Cartier divisor with support a simple normal crossing divisor on $Y$. If $Y$ is not irreducible, $f$ is admissible if the restriction of $f$ to each irreducible component of $Y$ is so.  The submonoid $\sM(X)_{D_1,\ldots, D_r}$ of $\sM(X)$ is the subset consisting of those $f:Y\to X$ that are admissible with respect to $D_1,\ldots, D_r$.
\\
iii)  Let $f:Y\to X$ be a morphism, with $Y\in \Sm_k$. An effective Cartier divisor $E$ on $Y$ is called {\em admissible} with respect to $D_1,\ldots, D_r$ if $E$  is a simple normal crossing divisor on $Y$ and,  for each face $E^J$ of $E$, the composition $E^J\to Y\to X$ is  admissible with respect to $D_1,\ldots, D_r$.
\end{definition} 

When the collection of pseudo-divisors is understood we sometimes write $\sM(X)_{D_*}$ for $\sM(X)_{D_1,\ldots, D_r}$.  If $D_1,\ldots, D_r$ and $D'_1,\ldots, D'_s$ are two sequences of pseudo-divisors on $X$, we write $D_*, D_*'$ for the sequence $D_1,\ldots, D_r$, $D_1',\ldots, D_s'$. For an $X$-scheme $f:X'\to X$, we write $f^*(D_*)$ for the sequence of pseudo-divisors $f^*(D_1),\ldots, f^*(D_r)$ and often write $\sM(X')_{D_*}$ for $\sM(X')_{f^*(D_*)}$ if the map $f$ is understood.

\begin{remarks} (1) For $f:Y\to X$ in $\sM(X)$, $f$ is in $\sM(X)_{D_*}$ exactly when $\id_Y$ is in $\sM(Y)_{D_*}$.   \\[5pt]
(2) Suppose $X$ is irreducible, $D_1,\ldots, D_r$ pseudo-divisors on $X$. Write $I_{\id_X}=\{i_0, i_1,\ldots, i_s\}$ and let $D^{ess}_j=D_{i_j}$. Then $\sM(X)_{D_*}=\sM(X)_{D^{ess}_*}$. Also, $\sM(X)_\0=\sM(X)$.\\[5pt]
(3) Let $f:Y\to X$ be in $\sM(X)_{D_*}$ with $Y$ irreducible. Suppose that $f$ has leading pseudo-divisor $D_{i_0}$ and take $s$ with  $i_0\le s\le r$. Let $D_{i_0}^1,\ldots, D_{i_0}^m$ be the irreducible components of the simple normal crossing divisor $\Div f^*D_{i_0}$. Then the intersection  $E_s:=\cap_{i=i_0}^s\Div f^*D_i$ is an effective Cartier divisor of the form $\sum_jm_jD_{i_0}^j$; in particular, $E_s$  is  a simple normal crossing divisor on $Y$  with $0\le E_s\le \Div f^*D_{i_0}$.\\[5pt]
(4) The condition in definition~\ref{def:AdmissibleMaps}(ii) is equivalent to the following: For $f:Y\to X$ with $Y$ irreducible in $\Sm_k$, $f$ is admissible if $f$ has no leading pseudo-divisor or $f$ has leading pseudo-divisor $D_{i_0}$ and

\addtolength{\textwidth}{-15pt}
\ \hskip2pt\begin{minipage}[c]{\textwidth} 
\ \\
\hbox to0pt{\hss a)\ }$\Div f^*D_{i_0}$ is a simple normal crossing divisor on $Y$.\\
\hbox to0pt{\hss b)\ }For all $s$ with $i_0\le s\le r$, the scheme-theoretic intersection $\cap_{i=i_0}^s\Div f^*D_i$ is a Cartier divisor on $Y$, or is empty.
\end{minipage}
\addtolength{\textwidth}{15pt}
\end{remarks}

\begin{lemma}\label{lem:AdmissibleSupport} Let $D_1,\ldots, D_r$ be pseudo-divisors on $X\in \Sch_k$,  let $f:Y\to X$ be a morphism with $Y\in \Sm_k$, $Y$ irreducible, and  with $\id_Y\in \sM(Y)_{D_*}$.  Let $E$ be an effective Cartier divisor on $Y$. Suppose that $D_{i_0}$ is the leading pseudo-divisor for $f$ and that $E+\Div f^*D_{i_0}$ is a simple normal crossing divisor on $Y$. Then $E$ is admissible with respect to $D_1,\ldots, D_r$. \end{lemma}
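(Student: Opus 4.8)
The plan is to verify directly the two conditions in Definition~\ref{def:AdmissibleMaps}(iii): that $E$ is a simple normal crossing divisor on $Y$, and that for every face $E^J$ the composite $g_J\colon E^J\to Y\xrightarrow{f}X$ is admissible with respect to $D_1,\ldots,D_r$. The first condition is essentially free: $\supp E\subseteq\supp(E+\Div f^*D_{i_0})$ is a union of irreducible components of the simple normal crossing divisor $E+\Div f^*D_{i_0}$, hence is itself a simple normal crossing divisor, and $E$ is an effective Cartier divisor. For the second condition, since admissibility of a map with reducible smooth source is checked on irreducible components and each face $E^J$ is smooth over $k$, I would fix an irreducible component $Y'$ of a face $E^J$, set $g\colon Y'\to X$ to be the composite, and verify Definition~\ref{def:AdmissibleMaps}(ii) for $g$: for every $s$ with $1\le s\le r$, the scheme-theoretic intersection $\bigcap_{i=1}^s\Div g^*D_i$ should be $Y'$, or empty, or an effective Cartier divisor with simple normal crossing support on $Y'$.

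The key identity is $\bigcap_{i=1}^s\Div g^*D_i=\big(\bigcap_{i=1}^s\Div f^*D_i\big)\cap Y'$, scheme-theoretic intersection commuting with restriction to a closed subscheme. Here one uses the structure of $\bigcap_{i=1}^s\Div f^*D_i$ coming from the leading pseudo-divisor. For $i<i_0$ we have $i\notin I_f$, so $|f^*D_i|=Y$, and by the standing convention that the support of a pseudo-divisor is its zero subscheme this means $\Div f^*D_i=Y$; hence for $s<i_0$ the intersection $\bigcap_{i=1}^s\Div f^*D_i$ is $Y$ and restricts to $Y'$, which is allowed. For $s\ge i_0$ those factors drop out, and Remark (3) applies to $f$ (which lies in $\sM(X)_{D_*}$, has irreducible source, and has leading pseudo-divisor $D_{i_0}$): it gives that $\bigcap_{i=i_0}^s\Div f^*D_i$ is empty or an effective Cartier divisor $G_s$ with $0\le G_s\le\Div f^*D_{i_0}$, so in particular $\supp G_s$ is a union of irreducible components of $\Div f^*D_{i_0}$.

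The heart of the argument — and the step I expect to need the most care — is then to show that $G_s\cap Y'$ is $Y'$, or empty, or Cartier with simple normal crossing support on $Y'$, using only the hypothesis that $E+\Div f^*D_{i_0}$ is a simple normal crossing divisor. The point is that the irreducible components of $E$ together with those of $\Div f^*D_{i_0}$ form a transversal configuration, étale-locally a union of coordinate hyperplanes, and $Y'$ is an irreducible component of an intersection of some of the components of $E$, hence étale-locally a coordinate subspace. So either some component of $G_s$ contains $Y'$, in which case its local defining equation vanishes identically on $Y'$ and $G_s\cap Y'=Y'$; or no component of $G_s$ contains $Y'$, in which case transversality forces each component $F_k$ of $G_s$ that meets $Y'$ to cut out an effective Cartier divisor $F_k\cap Y'$ on $Y'$ which is smooth and meets the others transversally, so that $G_s\cap Y'=\sum_k m_k(F_k\cap Y')$ is an effective Cartier divisor on $Y'$ with simple normal crossing support (or empty, if all $F_k\cap Y'=\emptyset$). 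The only routine bookkeeping left is that the scheme-theoretic intersection with $Y'$ really carries $G_s=\sum_k m_kF_k$ to $\sum_k m_k(F_k\cap Y')$ with the correct multiplicities, which follows at once by working in local coordinates adapted to $E+\Div f^*D_{i_0}$. Running both cases over all $s$ shows $g$, and hence every composite $E^J\to X$, is admissible, so $E$ is admissible with respect to $D_1,\ldots,D_r$.
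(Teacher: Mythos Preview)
Your proof is correct and follows essentially the same approach as the paper's. The only organizational difference is that the paper uses the equivalent characterization of admissibility from Remark~(4): it splits on whether $Y'\subset|D_{i_0}|$, identifies the leading pseudo-divisor $D_{i_1}$ for $g$ accordingly, and then checks only that $\Div g^*D_{i_1}$ is SNC and that the higher intersections are Cartier, whereas you verify Definition~\ref{def:AdmissibleMaps}(ii) directly for every $s$ by splitting on whether some component of $G_s$ contains $Y'$. The underlying geometry---that $G_s\le\Div f^*D_{i_0}$ and that $Y'$ is a face of the SNC divisor $E+\Div f^*D_{i_0}$---is identical in both arguments.
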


\begin{proof} To simplify the notation, we may replace $X$ with $Y$, $D_*$ with $f^*(D_*)$ and $f$ with $\id_Y$. We may therefore assume that $X$ is in $\Sm_k$, is irreducible and $f=\id_X$.

Under our assumption, $E$ is a simple normal crossing divisor; so  let $F$  be an irreducible component of a face $E^J$ of $E$ and let $g:F\to X$ be the inclusion  $F\to  X$.  

Suppose that $F$ is not contained in $|D_{i_0}|$. Then $D_{i_0}$ is also the leading pseudo-divisor for $g$. Since $F$ is a component of a face of the simple normal crossing divisor $E+\Div D_{i_0}$, $\Div g^*D_{i_0}$ is a simple normal crossing divisor on $F$. Similarly, for all $s$, $i_0\le s\le r$ the scheme-theoretic intersection on $F$, $\cap_{i=i_0}^s\Div g^*D_i$ is just $g^{-1}(\cap_{i=i_0}^s\Div D_i)$. As $\cap_{i=i_0}^s\Div D_i$ is a Cartier divisor on $X$, with support in $\Div D_{i_0}$,  it follows that $\cap_{i=i_0}^s\Div g^*D_i$ is a Cartier divisor on $F$.

Suppose that $F$ is contained in $|D_{i_0}|$. If $g$ has no leading pseudo-divisor, then clearly $F\to X$ is in $\sM(X)_{D_*}$; if $g$ has leading pseudo-divisor $D_{i_1}$, then $i_0<i_1$ and $F\subset \Div D_i$ for $i_0\le i<i_1$. Thus,    $\Div g^*D_{i_1}=g^{-1}(\cap_{i=i_0}^{i_1}\Div D_i)$.  Since $\cap_{i=i_0}^{i_1}\Div D_i$ is a Cartier divisor on $X$ with $0\le \cap_{i=i_0}^{i_1}\Div D_i\le \Div D_{i_0}+E$, and $F$ is a face of the simple normal crossing divisor $\Div D_{i_0}+E$, it follows that $\Div g^*D_{i_1}$ is a simple normal crossing divisor on $F$. Similarly, for all $s$ with $i_1\le s\le r$, $\cap_{i=i_1}^s g^*\Div D_i$ is a Cartier divisor on $F$, and thus $g:F\to X$ is in $\sM(X)_{D_*}$.
\end{proof}

\begin{lemma} \label{lem:AdmissibleIntersection}  Let $D_1,\ldots, D_r$ be pseudo-divisors on $X\in \Sch_k$,  let $f:Y\to X$ be a morphism with $Y\in \Sm_k$, $Y$ irreducible, and with $\id_Y\in\sM(Y)_{D_*}$. Suppose that $D_{i_0}$ is the leading pseudo-divisor for $f$. Then $\Div f^*D_{i_0}$ is simple normal crossing divisor on $Y$, admissible with respect to $D_{i_0+1},\ldots, D_r$.
\end{lemma}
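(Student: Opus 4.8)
The plan is to imitate the structure of the proof of Lemma~\ref{lem:AdmissibleSupport}. First I would reduce to the case $f=\id_X$ by replacing $X$ with $Y$, each $D_i$ with $f^*D_i$, and $f$ with $\id_Y$; so assume from now on that $X\in\Sm_k$ is irreducible, $\id_X\in\sM(X)_{D_*}$, and $D_{i_0}$ is the leading pseudo-divisor for $\id_X$, and set $E:=\Div D_{i_0}$. The first assertion, that $E$ is a simple normal crossing divisor on $X$, is immediate from part~a) of the fourth remark following Definition~\ref{def:AdmissibleMaps}; alternatively, by minimality of $i_0$ one has $\Div D_i=X$ for $i<i_0$, whence $E=\bigcap_{i=1}^{i_0}\Div D_i$, which by admissibility of $\id_X$ is $X$, empty, or a Cartier divisor with simple normal crossing support, and it is not $X$ since $|D_{i_0}|\neq X$. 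If $E=\emptyset$ the second assertion is vacuous, so assume $E\neq\emptyset$ and write $E=\sum_{j=1}^m n_jD_{i_0}^j$ for its decomposition into irreducible components.

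For the second assertion, by Definition~\ref{def:AdmissibleMaps}(iii) and the reduction to irreducible components in Definition~\ref{def:AdmissibleMaps}(ii), it suffices to show that for every irreducible component $F$ of a face $E^J$ of $E$ the inclusion $g\colon F\to X$ is admissible with respect to $D_{i_0+1},\dots,D_r$. Since $F$ is a component of a face of $E=\Div D_{i_0}$ we have $F\subseteq|D_{i_0}|$. If $g$ has no leading pseudo-divisor among $D_{i_0+1},\dots,D_r$, then $g$ is admissible by the fourth remark after Definition~\ref{def:AdmissibleMaps} and we are done; otherwise let $D_{i_1}$, with $i_0<i_1\le r$, be the leading pseudo-divisor of $g$ for this sequence, so that $F\subseteq|D_i|$ for all $i$ with $i_0\le i<i_1$. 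The key observation, exactly as in the second half of the proof of Lemma~\ref{lem:AdmissibleSupport}, is that since $F$ is reduced and contained in $|\Div D_i|$ for $i_0\le i<i_1$, the scheme-theoretic preimage $g^{-1}(\Div D_i)$ equals $F$ for those $i$; reinserting these ``missing'' factors then gives
\[
\bigcap_{i=i_1}^{s}\Div g^*D_i \;=\; g^{-1}\Bigl(\bigcap_{i=i_0}^{s}\Div D_i\Bigr)\;=\;g^{-1}(E_s)
\]
for every $s$ with $i_1\le s\le r$, where $E_s:=\bigcap_{i=i_0}^{s}\Div D_i$.

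Now I would apply the third remark following Definition~\ref{def:AdmissibleMaps}: for $i_0\le s\le r$ the divisor $E_s$ is an effective Cartier divisor of the form $\sum_j m_j(s)D_{i_0}^j$ with $0\le E_s\le E$. Restricting a local equation $\prod_j h_j^{m_j(s)}$ of $E_s$ (with $h_j$ a local equation for $D_{i_0}^j$) to $F$, one sees that $g^{-1}(E_s)$ is all of $F$ precisely when some component $D_{i_0}^j$ containing $F$ occurs in $E_s$ with $m_j(s)>0$, and otherwise equals the effective Cartier divisor $\sum_j m_j(s)\,(D_{i_0}^j\cap F)$ on $F$, which has simple normal crossing support because $E$ does and $F$ is a component of one of its faces. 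But the first alternative cannot occur: $g^{-1}(E_s)\subseteq g^{-1}(\Div D_{i_1})=\Div g^*D_{i_1}$, whose underlying set $|g^*D_{i_1}|$ is a \emph{proper} subset of $F$ by the choice of $i_1$. Hence $g^{-1}(E_s)$ is an effective Cartier divisor on $F$ with simple normal crossing support, or is empty. Taking $s=i_1$ shows $\Div g^*D_{i_1}$ is a simple normal crossing divisor on $F$, and letting $s$ range over $i_1\le s\le r$ gives the remaining admissibility condition in the form of the fourth remark after Definition~\ref{def:AdmissibleMaps}; thus $g$ is admissible with respect to $D_{i_0+1},\dots,D_r$, as required.

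I expect the step requiring the most care to be the bookkeeping around the pseudo-divisors $D_{i_0},\dots,D_{i_1-1}$ that lie ``between'' the leading divisor $D_{i_0}$ of $\id_X$ and the leading divisor $D_{i_1}$ of $g$: the clean device is the identity $g^{-1}(\Div D_i)=F$ whenever $F\subseteq|\Div D_i|$, which lets one rewrite $\bigcap_{i=i_1}^{s}\Div g^*D_i$ as $g^{-1}(E_s)$ and bring the third remark after Definition~\ref{def:AdmissibleMaps} to bear. One must also verify that the potential collapse $g^{-1}(E_s)=F$ is excluded, and this is exactly where the defining property of the leading pseudo-divisor $D_{i_1}$ enters.
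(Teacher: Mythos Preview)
Your proof is correct; it essentially reproduces the argument of Lemma~\ref{lem:AdmissibleSupport} in the special case $E=\Div D_{i_0}$, checking directly that each face of $E$ is admissible with respect to $D_{i_0+1},\ldots,D_r$.

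The paper's proof is shorter because it simply \emph{invokes} Lemma~\ref{lem:AdmissibleSupport} rather than reproving it. Since $\Div D_{i_0}+\Div D_{i_0}$ is a simple normal crossing divisor, Lemma~\ref{lem:AdmissibleSupport} applied with $E=\Div D_{i_0}$ gives at once that $\Div D_{i_0}$ is admissible with respect to the full sequence $D_1,\ldots,D_r$. Then a one-line observation finishes: for any face $F$ of $\Div D_{i_0}$ one has $F\subset|D_i|$ for all $i\le i_0$, so $\Div i_F^*D_i=F$ for those $i$, and hence $\cap_{i=1}^{s}\Div i_F^*D_i=\cap_{i=i_0+1}^{s}\Div i_F^*D_i$ for all $s>i_0$; admissibility with respect to $D_1,\ldots,D_r$ is therefore the same as admissibility with respect to $D_{i_0+1},\ldots,D_r$. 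Your approach has the virtue of being self-contained, while the paper's buys brevity by reusing the previous lemma as a black box.
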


\begin{proof} As above, we reduce to the case $Y=X$, $f=\id_Y$. Since  $\id_Y$ is in $\sM(Y)_{D_*}$, it follows that $\Div D_{i_0}$ is a simple normal crossing divisor on $Y$. From lemma~\ref{lem:AdmissibleSupport} (with $E=\Div D_{i_0}$), it follows that $\Div D_{i_0}$ is admissible with respect to $D_1,\ldots, D_r$. If now $F$ is a face of $\Div D_{i_0}$, then the inclusion $i_F:F\to Y$ is in $\sM(Y)_{D_*}$. However, $Y=|D_i|$ for $i=1,\ldots, i_0-1$ and $F\subset |D_{i_0}|$, so either $i_F$ has no leading pseudo-divisor or $i_F$ has leading pseudo-divisor $D_{i_1}$ with $i_0<i_1\le r$. Therefore $i_F$ is in $\sM(Y)_{D_{i_0+1},\ldots, D_r}$; as $F$ was an arbitrary face of $\Div D_{i_0}$, it follows that $\Div D_{i_0}$ is admissible  with respect to $D_{i_0+1},\ldots, D_r$.
\end{proof}

Recall from  \cite[definition 2.1.6]{CobordismBook} the notion of a  cobordism cycle over $X$, namely, a tuple $(f:Y\to X, L_1,\ldots, L_m)$ with $Y\in\Sm_k$. $Y$ irreducible,  $f$ a morphism representing a class in $\sM(X)$ and the $L_i$ line bundles on $Y$. Two   cobordism cycles $(f:Y\to X, L_1,\ldots, L_m)$  and $(f':Y'\to X, L'_1,\ldots, L'_m)$   are isomorphic if there is an isomorphism $\phi:Y\to Y'$ over $X$ and a permutation $\sigma$ such that $\phi^*L'_i\cong L_{\sigma(i)}$ for $i=1,\ldots m$.  We have the free abelian group $\sZ_*(X)$ on the isomorphism classes of cobordism cycles, graded by giving $(f:Y\to X, L_1,\ldots, L_m)$ degree $\dim_kY-m$. 

\begin{definition} For $X\in \Sch_k$ with pseudo-divisors $D_1,\ldots, D_r$, we let $\sZ_*(X)_{D_*}$ be the graded subgroup of $\sZ_*(X)$ generated by the cobordism cycles $(f:Y\to X, L_1,\ldots, L_m)$ with $f:Y\to X$ in $\sM(X)_{D_*}$. 
\end{definition}

For $0\le s\le r$ and $D'_*$ the sequence $D_1, \ldots, D_s$, we have the inclusion
\[
\res_{D_*/D_*'}:\sM_*(X)_{D_*}\to \sM_*(X)_{D'_*}
\]
which defines the inclusion $\res_{D_*/D'_*}:\sZ_*(X)_{D_*}\to\sZ_*(X)_{D'_*}$.

\subsection{The dimension axiom} We define the group $\un{\sZ}_*(X)_{D_*}$.

\begin{definition} Let $X$ be in $\Sch_k$ and let $D_1,\ldots, D_r$ be pseudo-divisors on $X$. Let $\<\sR_*^{Dim}\>(X)_{D_*}$  be the subgroup of $\sZ_*(X)_{D_*}$ generated by cobordism cycles of the form
\[
(f:Y\to X, \pi^*(L_1),\ldots, \pi^*(L_m),M_1,\ldots, M_s),
\]
where $\pi:Y\to Z$ is a smooth quasi-projective morphism, $Z$ is in $\Sm_k$, $L_1,\ldots, L_m$ are line bundles on $Z$ and $m>\dim_kZ$. We set
\[
\uu{\sZ}_*(X)_{D_*}:= \sZ_*(X)_{D_*}/\<\sR_*^{Dim}\>(X)_{D_*}.
\]
\end{definition}

We have functoriality for smooth quasi-projective morphisms of relative dimension $d$, $f:X'\to X$:
\begin{align*}
&f^*:\sM(X)_{D_*}\to \sM(X')_{f^*(D_*)};\hskip5pt f^*(g:Y\to X):=p_1:X'\times_XY\to X',\\
&f^*:\sZ_*(X)_{D_*}\to \sZ_{*+d}(X')_{f^*(D_*)};\\&\hskip150pt f^*(g, L_1,\ldots, L_m):=(f^*g, p_2^*L_1,\ldots, p_2^*L_m)\\
&f^*:\uu{\sZ}_*(X)_{D_*}\to \uu{\sZ}_{*+d}(X')_{f^*(D_*)};\\&\hskip150pt f^*(g, L_1,\ldots, L_m):=(f^*g, p_2^*L_1,\ldots, p_2^*L_m),
\end{align*}
and push-forward maps for projective morphisms $f:X'\to X$:
\begin{align*}
&f_*:\sM(X')_{f^*(D_*)}\to \sM(X)_{D_*};&f_*(g:Y\to X'):=f\circ g:Y\to X,\\
&f_*:\sZ_*(X')_{f^*(D_*)}\to \sZ_*(X)_{D_*};&f_*(g, L_1,\ldots, L_m):=(f_*(g), L_1,\ldots, L_m),\\
&f_*:\uu{\sZ}_*(X')_{f^*(D_*)}\to \uu{\sZ}_*(X)_{D_*};&f_*(g, L_1,\ldots, L_m):=(f_*(g), L_1,\ldots, L_m).
\end{align*}
Also, for $L\to X$ a line bundle on $X$, we have the Chern class endomorphism
\[
\cn(L):\sZ_*(X)_{D_*}\to \sZ_{*-1}(X)_{D_*},
\]
defined as for $\sZ_*(X)$:
\[
\cn(L)((f:Y\to X,L_1,\ldots, L_r)):=(f:Y\to X,L_1,\ldots, L_r, f^*L).
\]
This descends to the locally nilpotent endomorphism
\[
\cn(L):\uu{\sZ}_*(X)_{D_*}\to\uu{\sZ}_{*-1}(X)_{D_*}.
\]
The operation of product over $k$ defines external products
\[
\times:\sZ_*(X)\otimes\sZ_*(X')_{D_*}\to
\sZ_*(X\times_kX')_{D_*},
\]
which descend to $\uu{\sZ}_*(-)_{-}$.  

For $0\le s\le r$ and for $D_*'$ the subsequence $D_1,\ldots, D_s$,  the map $\res_{D_*/D'_*}:\sZ_*(X)_{D_*}\to \sZ_*(X)_{D'_*}$ descends to a homomorphism
\[
\res_{D_*/D'_*}:\uu{\sZ}_*(X)_{D_*}\to \uu{\sZ}_*(X)_{D'_*}.
\]

\begin{remark}\label{rem:Injective} The homomorphism $\res_{D_*/D'_*}:\uu{\sZ}_*(X)_{D_*}\to \uu{\sZ}_*(X)_{D'_*}$ is a split injection. Indeed, we  have the identity
\[
\<\sR_*^{Dim}\>(X)_{D_*}= \<\sR_*^{Dim}\>(X)_{D'_*} \cap \sZ_*(X)_{D_*}
\]
and both inclusions $\<\sR_*^{Dim}\>(X)_{D_*}\subset \sZ_*(X)_{D_*}$, $\<\sR_*^{Dim}\>(X)_{D'_*}\subset {\sZ}_*(X)_{D'_*}$ are inclusions of free subgroups on subsets of the generators of the free abelian groups $\sZ_*(X)_{D_*}$, $\sZ_*(X)_{D'_*}$.
In particular, taking $D'_*$ to be the empty set of pseudo-divisors, we may identify  $\L_*\otimes \uu{\sZ}_*(X)_{D_*}$ with  this $\L_*$-submodule of $\L_*\otimes\uu{\sZ}_*(X)$; this allows us to speak of the intersection $\L_*\otimes \uu{\sZ}_*(X)_{D_*}\cap \L_*\otimes \uu{\sZ}_*(X)_{D'_*}$ for two (or more) sequences of pseudo-divisors on $X$, the intersection taking place in $\L_*\otimes\uu{\sZ}_*(X)$. We define intersections
$\uu{\sZ}_*(X)_{D_*}\cap\uu{\sZ}_*(X)_{D'_*}$, ${\sZ}_*(X)_{D_*}\cap\uu{\sZ}_*(X)_{D'_*}$ or $\sM_*(X)_{D_*}\cap \sM_*(X)_{D'_*}$ similarly, the intersections taking place in $\un{\sZ}_*(X)$, $\sZ_*(X)$ and $\sM_*(X)$, respectively.
\end{remark}

The maps $\res_{D_*/D'_*}$ are all natural with respect to the operations described above,  in particular, taking $D'_*$ to be the empty set of pseudo-divisors, the operations on $\sM_*(-)_{D_*}$,  $\sZ_*(-)_{D_*}$ and $\un{\sZ}_*(-)_{D_*}$ are compatible with the corresponding ones on $\sM_*$, $\sZ_*$ and $\un{\sZ}_*$ via $\res_{D_*/\0}$. Consequently, all relations and compatibilities among these operations that hold in $\sM(-)$, $\sZ_*(-)$ or $\un{\sZ}_*(-)$ hold for $\sM(-)_{D_*}$, $sZ_*(-)_{D_*}$ or $\un{\sZ}_*(-)_{D_*}$  as well.

\subsection{Good position} We define a notion of ``good position" of a divisor $E$ with respect to a sequence of pseudo-divisors $D_1,\ldots, D_r$.

\begin{definition}\label{Def:GoodPosition} 
Let $Y$ be in $\Sm_k$, irreducible, and let  $D_1,\ldots, D_r$ be pseudo-divisors on $Y$ such that $\id_Y$ is in $\sM(Y)_{D_*}$. Let $E$ be an effective simple normal crossing divisor on $Y$.  We say that $E$ is {\em in  good position with respect to $D_*$} if either
 
\addtolength{\textwidth}{-15pt}
\ \hskip5pt\begin{minipage}[c]{\textwidth}
\ \\
\hbox to0pt{\hss i)\ }$\id_Y$ has no leading pseudo-divisor,\\
\hbox to0pt{\hss or\hskip7pt} \\
\hbox to0pt{\hss ii)\ }$\id_Y$ has leading pseudo-divisor $D_{i_0}$ and $E+\Div D_{i_0}$ is a simple normal crossing divisor on $Y$.
\end{minipage}
\addtolength{\textwidth}{15pt}
\\
We extend this notion to Cartier divisors $E$ on a not necessarily irreducible  $Y\in\Sm_k$ by requiring that $E\cap Y_i$ is in good position with respect to $D_*$ for each irreducible component $Y_i$ of $Y$.
\end{definition}

\begin{remarks}\label{rem:GenPosition} (1) For  $f:Y\to X$ in $\sM(X)_{D_*}$ with  $E$ a simple normal crossing divisor on $Y$,  if $E$ is in good position with respect to $D_*$, then $E$ is admissible with respect to $D_*$; this is lemma~\ref{lem:AdmissibleSupport}.   \\
(2) If $E$ is in good position with respect to $D_*$ on some $Y\in\Sm_k$ and $C$ is an effective Cartier divisor with $|C|\subset |E|$, then $C$ is also in good position with respect to $D_*$.
\end{remarks}

\begin{lemma}\label{lem:GoodPosition} Let $D, D_1, \ldots, D_r$ be pseudo-divisors on $X$, let $f:Y\to X$ be morphism with $Y\in \Sm_k$, $Y$ irreducible, and with $f$ in $\sM(X)_{D, D_*}$. Let $E$ be a simple normal crossing divisor on $Y$, in good position with respect to $D, D_*$. Suppose that $f(Y)\not\subset |D|$. Then\\[5pt]
(1)  $\Div f^*D$ is a simple normal normal crossing divisor on $Y$, admissible for $D_*$ and in good position with respect to $D, D_*$. \\[5pt]
(2) For each irreducible component $F$ of a face of $\Div f^*D$, if $F\not\subset |E|$, then $F\cap E$ is a simple normal crossing divisor on $F$, in good position with respect to $D_*$.\\[5pt]
(3) Let $F$ be an irreducible component of a face of $E+\Div f^*D$ with $F\subset |f^*D|$. Then the inclusion $F\to Y$ is in $\sM(Y)_{D_*}$.\\[5pt]
(4) Suppose that $E$ is smooth over $k$, and let $F_1\subset F_2$ be irreducible components of faces of $\Div f^*D$, with $F_1\not\subset |E|$ and $F_1$ a codimension one closed subscheme of $F_2$. Then for $i=1,2$, $F_i\cap E$ is smooth, the inclusion $F_i\cap E\to Y$ is in $\sM(Y)_{D_*}$  and the divisor $F_1\cap E$  on $F_2\cap E$ is in good position with respect to $D_*$.\\[5pt]
(5) Let $F_1\subset F_2$ be the inclusion of irreducible components of faces of $E$, with $F_1$ a codimension one closed subscheme of $F_2$. Then the divisor $F_1$ on $F_2$ is in good position with respect to $D, D_*$. \\[5pt]
(6) Let $C$ and $S$ be effective Cartier divisors on $Y$ with support contained in $E$. Suppose that $S$ is smooth and $C$ and $S$ have no common components. Then $S\to Y\to X$ is in $\sM(X)_{D, D_*}$ and the simple normal crossing divisor $C\cap S$ on $S$ is in good position with respect to $D, D_*$.
\end{lemma}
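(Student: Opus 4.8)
The plan is to prove the six assertions in order, reducing each to a question about faces of simple normal crossing divisors on a smooth irreducible scheme and then invoking Lemmas~\ref{lem:AdmissibleSupport} and~\ref{lem:AdmissibleIntersection}. As usual, I would first replace $X$ by $Y$, $D,D_*$ by $f^*(D),f^*(D_*)$ and $f$ by $\id_Y$, so throughout we work on $Y\in\Sm_k$ irreducible with $\id_Y\in\sM(Y)_{D,D_*}$ and $Y\not\subset|D|$; in particular $D$ is the leading pseudo-divisor for $\id_Y$ unless $\id_Y$ has an even earlier leading pseudo-divisor, in which case that one would lie among the $D_i$ with index less than the position of $D$—but since $D$ is listed first in $\sM(Y)_{D,D_*}$, the hypothesis $Y\not\subset|D|$ forces $D$ itself to be the leading pseudo-divisor. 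Then good position of $E$ with respect to $D,D_*$ means precisely that $E+\Div D$ is a simple normal crossing divisor on $Y$.

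For (1): since $E+\Div D$ is snc, so is $\Div D$; admissibility of $\Div D$ for $D_*$ is exactly Lemma~\ref{lem:AdmissibleIntersection} (with $i_0$ the index of $D$, so that $D_{i_0+1},\ldots$ is the sequence $D_*$); good position of $\Div D$ with respect to $D,D_*$ holds because $\Div D+\Div D=2\Div D$ has the same support as $\Div D$, hence is snc, so condition (ii) of Definition~\ref{Def:GoodPosition} is met. For (2) and (4): let $F$ be an irreducible component of a face of $\Div D$ with $F\not\subset|E|$; then $F\not\subset|D|$ cannot happen since $F\subset\Div D$, so $D$ is \emph{not} the leading pseudo-divisor for $g:F\to Y$—either $g$ has no leading pseudo-divisor, and then any effective divisor, in particular $F\cap E$, is vacuously in good position, or $g$ has leading pseudo-divisor $D_{i_1}$ with $i_1$ past the index of $D$, and then I must check $(F\cap E)+\Div g^*D_{i_1}$ is snc on $F$. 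Here I use that $F\cap E$ is a face (or union of faces) of the snc divisor $E+\Div D$ restricted to $F$, and $\Div g^*D_{i_1}$ is cut out on $F$ by the later $D_i$'s whose intersection with $\Div D$ is, by admissibility, a Cartier divisor supported in $\Div D$; combining with the snc hypothesis on $E+\Div D$ gives the claim, and smoothness of $F\cap E$ when $E$ is smooth follows since $F$ meets the smooth divisor $E$ transversally inside the snc package. Parts (3) and (5) are of the same flavour: (3) is the statement that a face $F\subset|D|$ of $E+\Div D$ maps to $Y$ admissibly for $D_*$, which follows exactly as in the proof of Lemma~\ref{lem:AdmissibleIntersection} (the argument there that faces of $\Div D$ lying in $|D|$ land in $\sM(Y)_{D_{i_0+1},\ldots,D_r}$ applies verbatim, using that $F$ is a face of the larger snc divisor $E+\Div D$ rather than $\Div D$ alone); (5) is the statement that a codimension-one face $F_1$ of a face $F_2$ of $E$ is in good position on $F_2$ for $D,D_*$, which, if $F_2\not\subset|D|$, reduces to checking $F_1+\Div(F_2\to Y)^*D$ is snc on $F_2$, true because $F_1$ and $\Div D$ restricted to $F_2$ are both faces/components of the ambient snc divisor $E+\Div D$, and if $F_2\subset|D|$ reduces again to the Lemma~\ref{lem:AdmissibleIntersection}-type argument.

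Finally (6): with $C,S$ effective Cartier divisors supported in $E$, $S$ smooth, $C$ and $S$ sharing no component, I note that $S$ is a sum of components of $E$ (being smooth with support in the snc divisor $E$, each component of $S$ is a component of $E$ and they are disjoint), so $S$ is itself a face, or disjoint union of faces, of $E$; thus $S\to Y\to X$ is in $\sM(X)_{D,D_*}$ by admissibility of $E$, which we have from Remark~\ref{rem:GenPosition}(1). That $C\cap S$ is in good position on $S$ for $D,D_*$: if $S\not\subset|D|$ I must show $(C\cap S)+\Div(S\to Y)^*D$ is snc on $S$; since $C\cap S$ is supported in $E\cap S$ and $\Div D$ restricted to $S$ is a face-type subdivisor of $E+\Div D$ restricted to $S$, and $S$ is cut out transversally, the whole configuration remains snc; if $S\subset|D|$ we again fall back on the Lemma~\ref{lem:AdmissibleIntersection} argument together with Remark~\ref{rem:GenPosition}(2) which says that any effective divisor supported in a good-position divisor is itself in good position.

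The main obstacle I expect is \emph{bookkeeping the leading-pseudo-divisor index} as we pass to faces: every time we restrict to a face $F$ lying in $|D|$, the leading pseudo-divisor can jump forward, and one must carefully track which suffix $D_{i_1},\ldots,D_r$ of the sequence remains relevant and verify the snc and Cartier conditions for \emph{all} the remaining scheme-theoretic intersections $\cap_{i=i_1}^{s}\Div g^*D_i$, not just the leading one. The key geometric fact that makes each of these verifications routine is that all the divisors in sight—$E$, $\Div D$, and the partial intersections $\cap_{i}\Div D_i$—are by hypothesis faces, components, or sub-Cartier-divisors of the single snc divisor $E+\Div D$ on $Y$, so restricting to any face preserves the snc property; the work is purely combinatorial once that observation is in place. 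I would therefore organize the proof so that this observation is stated once and then invoked repeatedly for (2)--(6), rather than re-derived each time.
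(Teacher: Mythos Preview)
Your plan is correct and follows essentially the same approach as the paper: reduce to $Y$ with $\id_Y$, use that $D$ is the leading pseudo-divisor, and exploit the single organizing fact that every relevant divisor (the $\cap_{i=0}^{s}\Div D_i$, $E$, and their mixtures) is a sub-Cartier-divisor of the snc divisor $E+\Div D$, so all restrictions to faces stay snc. The paper carries out precisely the leading-pseudo-divisor bookkeeping you flag as the main obstacle, doing the case split ``$D$ is leading / some later $D_{i_0}$ is leading / no leading pseudo-divisor'' separately for each of (2)--(6); your proposal is a faithful outline of that argument, with the minor difference that the paper handles (3) via Remark~\ref{rem:GenPosition}(1) applied to the good-position divisor $E+\Div D$ rather than by re-running Lemma~\ref{lem:AdmissibleIntersection}, and treats (4) on its own (checking $(F_1\cap E)+\Div D_{i_0}|_{F_2\cap E}$ is snc on $F_2\cap E$) rather than folding it into (2).
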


\begin{proof} We write $D_0$ for $D$ when convenient. Under our assumptions, $D$ is the leading pseudo-divisor for $f$ with respect to $D, D_1,\ldots, D_r$. By  lemma~\ref{lem:AdmissibleIntersection}, $\Div f^*D$ is a simple normal crossing divisor on $Y$, admissible with respect to $D_*$.  As $\Div f^*D+\Div f^*D$ is a simple normal crossing divisor on $Y$, $\Div f^*D$ is in good position with respect to $D, D_*$,  proving (1).

For (2),  take $F$ to be an irreducible component of a face of $\Div f^*D$. Since $E+\Div f^*D$ is a simple normal crossing divisor on $Y$, it follows that $F\cap E$ is a simple normal crossing divisor on $F$ as long as $F\not\subset |E|$. Let $g:F\to X$ be the composition $f\circ i_F$, where $i_F:F\to Y$ is the inclusion. 

As $\Div f^*D$ is admissible for $D_*$, $g$ is in  $\sM(X)_{D_*}$.  Supposing that $g$ has no leading pseudo-divisor with respect to $D_*$, it follows that $i_F^*E$ is in good position with respect to $D_*$. Suppose then that $D_{i_0}$ is the leading pseudo-divisor for $g$ with respect to $D_*$. Then $F\subset \Div f^*D_i$ for all $i$, $0\le i<i_0$ and thus $\Div g^*D_{i_0}= i_F^*(\cap_{i=0}^{i_0}\Div f^*D_i)$. But since $\id_Y$ is in $\sM(Y)_{D, D_*}$,  $\cap_{i=0}^{i_0} \Div f^*D_i$  is a Cartier divisor on $Y$ and is a subdivisor of  $\Div f^*D=\Div f^*D_0$.  Thus  $\cap_{i=0}^{i_0} \Div f^*D_i+E$ is a simple normal crossing divisor on $Y$ and is a subdivisor of $\Div f^*D+E$. As $F$ is a component of a face of $\Div f^*D+E$, it follows that $i_F^*(E+ \Div f^*D_{i_0})$ is a simple normal crossing divisor on $F$, and thus $i_F^*E$ is in good position with respect to $D_*$. 

The assertion  (3) follows from remark~\ref{rem:GenPosition}(1): $E+\Div f^*D$ is in good position with respect to $D, D_*$, hence each irreducible component $F$ of a face of $E+\Div f^*D$ is admissible with respect to $D, D_*$. If $F$ is contained in $|f^*D|$, then $D$ cannot be the leading pseudo-divisor for $F\to Y$, hence $F\to Y$ is in $\sM(Y)_{D_*}$.

For (4), the fact that $F_i\cap E$ is smooth and $F_1\cap E$ has codimension one on $F_2\cap E$ follows from the fact that $E+\Div f^*D$ is a simple normal crossing divisor. The inclusions $F_i\cap E\to Y$ are in $\sM(Y)_{D_*}$ by (3). If $D_{i_0}$ is the leading pseudo-divisor for $F_2\cap E\to Y$, then $E+ \cap_{i=0}^{i_0} \Div f^*D_i$ is a simple normal crossing divisor on $Y$ and a sub-divisor of $E+\Div f^*D$. It follows that the divisor
\[
 F_1\cap E+ \Div f^*D_{i_0}\cap F_2\cap E= F_1\cap E +  \cap_{i=0}^{i_0} \Div f^*D_i)\cap F_2\cap E
\]
 on $F_2\cap E$  is a simple normal crossing divisor. 

For (5), $\id_{F_2}$ is in $\sM(F_2)_{D, D_*}$, since $E$ is admissible for $D, D_*$ by remark~\ref{rem:GenPosition}(1). Suppose that $D_{i_0}$ is the leading pseudo-divisor for $F_2$ with respect to $D, D_*$. As above, $\Div f^*D_{i_0}\cap F_2$ is equal to $C\cap F_2$ for $C$ the subdivisor $\cap_{i=0}^{i_0} \Div f^*D_i$ of $\Div D$. $F_1\subset F_2$ is an irreducible component of $F_2\cap E_i$ for $E_i$ some irreducible component of $E$. As $E_i+C$ is a subdivisor of the simple normal crossing divisor $E+\Div D$, the intersection $F_2\cap(E_i+C)$ is a simple normal crossing divisor on $F_2$. As this intersection contains $F_1+F_2\cap \Div D_{i_0}$ as a subdivisor, it follows that $F_1+F_2\cap \Div D_{i_0}$ is also a simple normal crossing divisor on $F_2$, and thus $F_1$ is in good position on $F_2$ with respect to $D, D_*$. In case there is no leading pseudo-divisor for $\id_{F_2}$, then  $F_1$ is in good position on $F_2$ with respect to $D, D_*$, as $F_1$ is a smooth divisor on $F_2$. 

Finally, for (6), $C+S$ is a simple normal crossing divisor and $C$ and $S$ have no common components, hence  $C\cap S$ is a simple normal crossing divisor on $S$. Since $S$ is a disjoint union of faces of $E$, $S\to X$ is in $\sM(X)_{D, D_*}$ by remark~\ref{rem:GenPosition}(1). Let $S'$ be an irreducible component of $S$; we need to check that $S'\cap C$ is in good position with respect to $D, D_*$.  Suppose $D$ is the leading pseudo-divisor for $S'\to X$. Then $C+\Div f^*D+S'$ is a simple  normal crossing divisor on $Y$ and $S'$ has no components in common with $C+\Div f^*D$, so $S'\cap(C+\Div f^*D)$ is a simple normal crossing divisor on $S'$ and thus $C\cap S'$ is in good position on $S'$ with respect to $D, D_*$. If $D_{i_0}$ is the leading pseudo-divisor for $S'\to X$ for some $i_0\ge1$, then $S'\subset |f^*D_i|$ for $0\le i<i_0$ and $S'\cap \Div f^*D_{i_0}=S'\cap \cap_{i=0}^{i_0}\Div f^*D_i$.  Since $Y\to X$ is in $\sM(X)_{D, D_*}$, $B:= \cap_{i=0}^{i_0}\Div f^*D_i$ is a subdivisor of $\Div f^*D$ and thus as above, $S'\cap(C+B)$ is a simple normal crossing divisor on $S'$. Thus, $C\cap S'$ is in good position on $S'$ with respect to $D, D_*$ in this case as well. In case $S'\to X$ has no leading pseudo-divisor, then $C\cap S'$ is trivially in good position with respect to $D, D_*$. 
\end{proof}

\subsection{Refined algebraic cobordism}
We complete the definition of $\Omega_*(-)_{D_*}$.

\begin{definition}\label{Def:preOmegaD} Let $X$ be in $\Sch_k$ and let $D_1,\ldots, D_r$ be 
pseudo-divisors on $X$. Let  $\<\sR^{Sect}_*\>(X)_{D_*}$ be the subgroup of $\uu{\sZ}_*(X)_{D_*}$ generated by elements of the form
\[
[f:Y\to X,L_1,\ldots, L_m]-[f\circ i:Z\to X,i^*(L_1),\ldots, i^*(L_{m-1})],
\]
with $m>0$, $[f:Y\to X,L_1,\ldots, L_m]$ a cobordism cycle in $\sZ_*(X)_{D_*}$ and $i:Z\to Y$ the closed immersion of the smooth subscheme defined by the vanishing of a section $s:Y\to L_m$ transverse to the zero-section, such that $Z$ is in good position with respect to $D_*$.

We define $\uu{\Om}_*(X)_{D_*}$ by
\[
\uu{\Om}_*(X)_{D_*}:=\uu{\sZ}_*(X)_{D_*}/\<\sR^{Sect}_*\>(X)_{D_*}.
\]
\end{definition}

Let $0\le s\le r$  and let $D_*'$ be the sequence $D_1,\ldots, D_s$. The map $\res_{D_*/D'_*}:\uu{\sZ}_*(X)_{D_*}\to\uu{\sZ}_*(X)_{D'_*}$ descends to $\res_{D_*/D'_*}:\uu{\Om}_*(X)_{D_*}\to\uu{\Om}_*(X)_{D'_*}$. The operations $f^*$, $f_*$ and $\cn(L)$, as well as the external products  also descend to the quotient $\uu{\Om}_*(-)_{D_*}$ of $\uu{\sZ}_*(-)_{D_*}$ and the maps $\res_{D_*/D'_*}$ are natural with respect to the operations and products.

We have the universal formal group law $(F_\L, \L_*)$ with coefficient ring $\L_*$ the  Lazard ring. If $T_1, T_2:B\to B$ are commuting locally nilpotent operators on an abelian group $B$, and $F(u,v)=\sum_{i,j}a_{ij}u^iv^j$ is a power series with $\L_*$-coefficients, we have the well-defined $\L_*$-linear operator $F(T_1,T_2):\L_*\otimes B\to\L_*\otimes B$ defined by
\[
F(T_1,T_2)(a\otimes b):=\sum_{i,j}aa_{ij}\otimes (T^i_1\circ T^j_2)(b).
\]

\begin{definition}\label{Def:tildeOmegaD} For $X$ in $\Sch_k$, let $\<\sR_*^{FGL}\>(X)_{D_*}$ be the $\L_*$-sub\-module of $\L_*\otimes\uu{\Om}_*(X)_{D_*}$ generated by elements of the form
\[
(\id\otimes f_*)\left(F_\L(\cn(L),\cn(M))(\eta)-\cn(L\otimes M)(\eta)\right),
\]
where $f:Y\to X$ is in $\sM(X)_{D_*}$, $L$ and $M$ are line bundles on $Y$, and $\eta$ is in $\uu{\Om}_*(Y)_{f^*(D_*)}$. We set 
\[
\Om_*(X)_{D_*}:=\L_*\otimes\uu{\Om}_*(X)_{D_*}/\<\sR_*^{FGL}\>(X)_{D_*}.
\]
\end{definition}

For $0\le s\le r$ and $D'_*$ the sequence $D_1,\ldots, D_s$, the natural transformation $\res_{D_*/D'_*}:\uu{\Om}_*(X)_{D_*}\to \uu{\Om}_*(X)_{D'_*}$ descends to an $\L_*$-linear transformation $\res_{D_*/D'_*}:\Om_*(X)_{D_*}\to\Om_*(X)_{D'_*}$.  The operations we have defined for $\uu{\Om}_*(-)_{-}$: $f^*$, $f_*$, $\cn(L)$ and external products, all descend to $\L_*$-linear or bi-linear operations on $\Om_*(-)_{-}$, and the maps  $\res_{D_*/D'_*}$ are natural with respect to these operations. 

If $f:Y\to X$ is an $X$-scheme, and $D_1,\ldots, D_r$ are  pseudo-divisors on $X$, we will often write $\Om_*(Y)_{D_*}$ for $\Om_*(Y)_{f^*(D_*)}$, and similarly for $\uu{\Om}_*(Y)_{D_*}$.

\subsection{Refined divisor classes}\label{subsec:RefDivClass}

The operators
\[
\cn(L):\Om_*(X)_{D_*}\to\Om_{*-1}(X)_{D_*}
\]
are locally nilpotent and commute with one another, thus, if we have line bundles $L_1,\ldots, L_m$ on $X$, and a power series $F(u_1,\ldots, u_m)$ with $\L_*$-coef\-fi\-cients (of total degree $d$), we have the $\L_*$-linear endomorphism
\[
F(\cn(L_1),\ldots, \cn(L_m)):\Om_*(X)_{D_*}\to \Om_{*+d}(X)_{D_*}.
\]
This lifts to the level of  $\L_*\otimes \un{\sZ}_*(X)_{D_*}$, giving us the $\L_*$-linear endomorphism
\[
\un{F}(\cn(L_1),\ldots, \cn(L_m)):\L_*\otimes \un{\sZ}_*(X)_{D_*}\to \L_*\otimes \un{\sZ}_{*+d}(X)_{D_*}.
\]
If we have a morphism $f:X'\to X$ and an element $\eta\in \Om_*(X')_{D_*}$, we often write $F(\cn(L_1),\ldots, \cn(L_m))(\eta)$ for 
$F(\cn(f^*L_1),\ldots, \cn(f^*L_m))(\eta)$, when the context makes the meaning clear; we similarly write  $\un{F}(\cn(L_1),\ldots, \cn(L_m))(\un{\eta})$ for  $\un{F}(\cn(f^*L_1),\ldots, \cn(f^*L_m))(\un{\eta})$ for an $\un{\eta}\in :\L_*\otimes \un{\sZ}_*(X')_{D_*}$.

If $f:Y\to X$ is in $\sM(X)_{D_*}$, we have the element $1_Y^{D_*}=[\id:Y\to Y]\in\sM_*(Y)_{D_*}$. Given line bundles $L_1,\ldots, L_m$ on $Y$ we define 
\[
\un{[Y; F(L_1,\ldots, L_m)]}_{D_*}:=\un{F}(\cn(L_1),\ldots, \cn(L_m))(1_Y^{D_*})\in\L_*\otimes \un{\sZ}_*(Y)_{D_*}
\]
and
\[
[Y; F(L_1,\ldots, L_r)]_{D_*}:=F(\cn(L_1),\ldots, \cn(L_m))(1_Y^{D_*})\in\Om_*(Y)_{D_*}.
\]
As above, if $f:Y'\to Y$ is a morphism, we often write $\un{[Y'; F(L_1,\ldots, L_m)]}_{D_*}$ for $\un{[Y'; F(f^*L_1,\ldots, f^*L_m)]}_{D_*}$ and similarly write $[Y'; F(L_1,\ldots, L_r)]_{D_*}$ for $[Y'; F(f^*L_1,\ldots, f^*L_r)]_{D_*}$.

We recall some notation from \cite[\S3.1.1]{CobordismBook}. Let $n_1$,$\ldots$, $n_m$ be non-negative integers. We have the power series with $\L_*$-coefficients $F^{n_1,\ldots, n_m}$   giving the sum in the universal group law $(F_\L,\L_*)$:
\[
F^{n_1,\ldots, n_m}(u_1,\ldots,u_m)=n_1\cdot_Fu_1+_F\ldots+_Fn_m\cdot_Fu_m.
\]
 We have  the canonical decomposition
\[
F^{n_1,\ldots, n_m}(u_1,\ldots,u_m)=\sum_{J}u^JF_J^{n_1,\ldots, n_m}(u_1,\ldots, u_m).
\]
Here $J=(j_1,\ldots, j_m)\in\{0,1\}^m$,   $u^J:=u_1^{j_1}\cdots u_m^{j_m}$ and $F_J^{n_1,\ldots, n_m}$ is a power series with $\L$-coefficients that only involves the $u_i$ with $j_i=1$; this property and the above identity uniquely characterize the $F_J^{n_1,\ldots, n_m}$.

If $E=\sum_{i=1}^mn_iE_i$ is a simple normal crossing divisor on a scheme $Y\in\Sm_k$, with  support $|E|$ and irreducible components $E_1,\ldots, E_m$, we have for $J\in\{0,1\}^m$ the face $E^J:=\cap_{j_i=1}E_i$, inclusions $\iota^J:E^J\to |E|$, $i_J:E^J\to Y$, and line bundles $O_Y(E_i)^J:=i_J^*(O_Y(E_i))$ on $E^J$. We have defined  the divisor class $[E\to|E|]$ of $\Omega_*(|E|)$ by the formula (see \cite[definition 3.1.5]{CobordismBook})
\[
[E\to|E|]:= \sum_J\iota^J_*([E^J;F^{n_1,\ldots, n_m}_J(O_Y(E_1)^J,\ldots,O_Y(E_m)^J)]).
\]

Suppose now that we have $f:Y\to X$ in $\sM(X)_{D_*}$, and a simple normal crossing divisor $E$ on $Y$, such that $E$ is admissible with respect to $D_*$. Write $E=\sum_{i=1}^mn_iE_i$, with the $E_i$ irreducible, as above.

Since  $E$ is admissible with respect to $D_*$,  $\id_{E^J}$  is in $\sM(E^J)_{D_*}$ for each index $J$, so we have the class
\[
[E^J; F^{n_1,\ldots, n_m}_J(O_W(E_1)^J,\ldots, O_W(E_m)^J)]_{D_*}\in\Omega_*(E^J)_{D_*},
\]
giving  the {\em refined divisor class} 
\[
[E\to|E|]_{D_*}:=\sum_J\iota^J_*[E^J; F^{n_1,\ldots, n_m}_J(O_Y(E_1)^J,\ldots, O_Y(E_m)^J)]_{D_*}
\]
in $\Omega_*(|E|)_{D_*}$. In fact, the same formula gives us a well-defined class
\[
\un{[E\to|E|]}_{D_*}:=\sum_J\iota^J_*\un{[E^J; F^{n_1,\ldots, n_m}_J(O_Y(E_1)^J,\ldots, O_Y(E_m)^J)]}_{D_*}
\]
in $\L_*\otimes \un{\sZ}_*(|E|)_{D_*}$ lifting $[E\to|E|]_{D_*}$.

If we have a pseudo-divisor $\tilde E$ on $Y$ such that  $E:=\Div \tilde{E}$ is a Cartier divisor on $Y$, admissible with respect to $D_*$, we often write $[\tilde E\to |\tilde E|]_{D_*}$ for   $[E\to |E|]_{D_*}$; if $f:|E|\to X'$ is a projective morphism of  finite type $X$-schemes, we write $[E\to X']_{D_*}$ for $f_*([E\to |E|]_{D_*})$ and $\un{[E\to X']}_{D_*}$ for $f_*(\un{[E\to |E|]}_{D_*})$.  These divisor classes and those for a truncated sequence $D'_*:=D_1,\ldots, D_s$ are compatible via the forget maps $\res_{D_*/D_*'}$.

\begin{lemma}\label{lem:Additivity} Let $D_1,\ldots, D_r$ be pseudo-divisors on some $Y\in \Sm_k$ and let $E$ be a simple normal crossing divisor on $Y$, admissible with respect to $D_*$. Write $E=\sum_{i=1}^mn_iE_i$ with each $E_i$ smooth, but not necessarily irreducible. We assume that $E_i$ and $E_j$ have no common components if $i\neq j$. For each index $J=(j_1,\ldots, j_m)\in\{0,1\}^m$ let $E^J=\cap_{j_i=1}E_i$, with inclusions $\iota^J:E^J\to |E|$, $i_J:E^J\to Y$. Let $L_i=O_Y(E_i)$, $L_i^J=i_J^*L_i$. Then
\[
[E\to |E|]_{D_*}=\sum_J\iota^J_*([E^J; F^{n_1,\ldots, n_m}_J(L_1^J,\ldots, L_m^J)]_{D_*})]).
\]
in $\Omega_*(|E|)_{D_*}$. 
\end{lemma}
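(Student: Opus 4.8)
The plan is to prove the identity already in $\L_*\oo\uu{\sZ}_*(|E|)_{D_*}$, from which it descends to $\Om_*(|E|)_{D_*}$; the pseudo-divisors $D_*$ play no role beyond ensuring that every class in sight is defined. Write $E_i=\coprod_kE_{i,k}$ for the decomposition of $E_i$ into its connected components; since $E_i$ is smooth these are exactly its irreducible components, they are pairwise disjoint, and the $E_{i,k}$ are together the irreducible components of $E$. By its definition, $\uu{[E\to|E|]}_{D_*}=\sum_{\tilde J}\iota^{\tilde J}_*\uu{[E^{\tilde J};F^{\vec n}_{\tilde J}(\cdots)]}_{D_*}$, the sum running over the indices $\tilde J$ of the refined decomposition, with $\vec n$ the multi-index carrying the value $n_i$ on each component of $E_i$; the lemma is the assertion that this coincides with the right-hand expression built from the coarser decomposition into the $E_i$. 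Both sides do lift to $\L_*\oo\uu{\sZ}_*(|E|)_{D_*}$: the face $E^J=\bigcap_{j_i=1}E_i$ is a disjoint union of faces of $E$ in its irreducible decomposition, which are admissible with respect to $D_*$ because $E$ is, so each $\uu{[E^J;F^{n_1,\ldots,n_m}_J(L_1^J,\ldots,L_m^J)]}_{D_*}$ is defined.

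Next I would regroup the sum over $\tilde J$. If $\tilde J$ selects two distinct components $E_{i,k}\ne E_{i,k'}$ of the same $E_i$, then $E^{\tilde J}=\0$ and the term vanishes; a surviving $\tilde J$ is the datum of a subset $S\subset\{1,\ldots,m\}$ together with a choice $\vec k=(k_i)_{i\in S}$ of one component of each $E_i$, $i\in S$, and I write $\tilde J=\tilde J(J,\vec k)$ with $J\in\{0,1\}^m$ of support $S$. Distributing intersection over disjoint union gives $E^J=\coprod_{\vec k}E^{\tilde J(J,\vec k)}$ with $E^{\tilde J(J,\vec k)}=\bigcap_{i\in S}E_{i,k_i}$, and $\iota^{\tilde J(J,\vec k)}$ is $\iota^J$ composed with the inclusion of the open-and-closed piece $E^{\tilde J(J,\vec k)}$ of $E^J$. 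Collecting the surviving terms by $J$, it remains to prove, for each $J$ of support $S$,
\[
\sum_{\vec k}\iota^{\tilde J(J,\vec k)}_*\uu{[E^{\tilde J(J,\vec k)};F^{\vec n}_{\tilde J(J,\vec k)}(\cdots)]}_{D_*}=\iota^J_*\uu{[E^J;F^{n_1,\ldots,n_m}_J(L_1^J,\ldots,L_m^J)]}_{D_*}.
\]

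The content is to identify the $\vec k$-summand on the left with the restriction of the right-hand class to the piece $E^{\tilde J(J,\vec k)}=\bigcap_{i\in S}E_{i,k_i}$. For $k\ne k_i$ the divisor $E_{i,k}$ is disjoint from this piece, so $O_Y(E_{i,k})$ restricts there to the trivial bundle via its canonical section; hence $O_Y(E_i)=\bigotimes_kO_Y(E_{i,k})$ restricts to $O_Y(E_{i,k_i})$, so on $E^{\tilde J(J,\vec k)}$ the bundle $L_i^J$ agrees with the one appearing in the refined summand. For the power series, uniqueness of the canonical decomposition $F^{n_1,\ldots,n_m}=\sum_Ju^JF^{n_1,\ldots,n_m}_J$ shows that $F^{n_1,\ldots,n_m}_J$ involves only the $u_i$ with $i\in S$ and equals the $|S|$-variable series $F^{(n_i)_{i\in S}}_{(1,\ldots,1)}$ (setting $u_i=0$ for $i\notin S$ leaves the monomials of square-free support exactly $S$ unchanged, since $F_\L(x,0)=x$); the same reasoning applied to $F^{\vec n}$, with the variables $u_{i,k}$ set to $0$ for $(i,k)\notin\tilde J(J,\vec k)$, shows $F^{\vec n}_{\tilde J(J,\vec k)}$ equals that same $|S|$-variable series in the variables $u_{i,k_i}$. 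Combining these, each left-hand summand equals the restriction of $\uu{[E^J;F^{n_1,\ldots,n_m}_J(L_1^J,\ldots,L_m^J)]}_{D_*}$ to the corresponding piece of $E^J$, so pushing forward and summing over $\vec k$ yields the displayed identity; summing over $J$ and projecting to $\Om_*(|E|)_{D_*}$ then gives the lemma. I expect the only delicate point to be this last power-series bookkeeping — that the ``$J$-component'' of a formal group sum $F^{n_1,\ldots,n_m}$ is computed by, and only involves, the sub-sum indexed by $S=\supp(J)$ — which is a short direct check. (Alternatively, one could invoke the injectivity of $\res_{D_*/\0}$ from Remark~\ref{rem:Injective} to reduce at once to the corresponding statement for the un-refined divisor classes of \cite{CobordismBook}.)
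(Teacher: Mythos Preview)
Your proof is correct and in fact establishes a sharper statement than the paper's: you prove the identity already in $\L_*\otimes\uu{\sZ}_*(|E|)_{D_*}$, whereas the paper works only in $\Omega_*(|E|)_{D_*}$. The overall architecture --- decompose each $E_i$ into irreducible components, discard the refined indices $\tilde J$ that select two components of the same $E_i$, and regroup the survivors by their coarse support $J$ --- is the same in both. The difference lies in the comparison on each piece $E^{\tilde J(J,\vec k)}$. The paper argues in $\Omega_*$: it uses the relations $\<\sR^{Sect}_*\>$ to kill $\cn(L_{i,k})(1)$ for $k\neq k_i$ (the empty divisor case) and then the relations $\<\sR^{FGL}_*\>$ to deduce $\cn(L_i)(1)=\cn(L_{i,k_i})(1)$. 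You instead observe that, since $E_{i,k}\cap E^{\tilde J(J,\vec k)}=\emptyset$ for $k\neq k_i$ (and $i\in S$), the restriction of $L_i=\bigotimes_k O_Y(E_{i,k})$ to that piece is \emph{isomorphic as a line bundle} to the restriction of $L_{i,k_i}$; since cobordism cycles are taken up to isomorphism of line bundles, this gives the identity already at the level of $\sZ_*$. Your approach is strictly more elementary and avoids invoking relations that are not needed here. Both proofs rely on the same power-series bookkeeping --- that $F^{n_1,\ldots,n_m}_J$ and $F^{\vec n}_{\tilde J(J,\vec k)}$ are the same $|S|$-variable series $F^{(n_i)_{i\in S}}_{(1,\ldots,1)}$ --- which you make explicit and the paper leaves implicit in the passage from $\cn(L_i)$ to $\cn(L_{iq_i})$. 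Your closing parenthetical about reducing via $\res_{D_*/\0}$ is superfluous: your main argument already works at the $\uu{\sZ}_*$ level where that injectivity lives.
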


\begin{proof}  For each $i$, write $E_i=\amalg_{j=1}^{p_i}E_{ij}$, with $E_{ij}$ irreducible (and smooth). Then $E=\sum_{i=1}^m\sum_{j=1}^{p_i}n_iE_{ij}$. Let $n_i^{(p_i)}$ be the sequence $n_i,\ldots, n_i$, with $p_i$ terms.  For $J_*=(J_1,\ldots, J_m)\in \{0,1\}^{p_1}\times\ldots\times\{0,1\}^{p_m}$, $J_i=(j_{i1},\ldots, j_{ip_i})$, we have the corresponding face $E^{J_*}$ of $E$, 
\[
E^{J_*}=\cap_{i=1}^m\cap_{j_{ik}=1}E_{ik}.
\]
We let $u_{i*}=u_{i1},\ldots, u_{ip_i}$. The formal group law sum 
\begin{multline*}
F^{n_1^{(p_1)},\ldots, n_m^{(p_m)}}(u_{1*},\ldots,u_{m*})\\:=n_1\cdot_F(u_{11}+_F\ldots+_Fu_{1p_1})+_F\ldots+_Fn_m\cdot_F
(u_{m1}+_F\ldots+_Fu_{mp_m})
\end{multline*}
decomposes following our usual conventions as
\[
F^{n_1^{(p_1)},\ldots, n_m^{(p_m)}}(u_{1*},\ldots,u_{m*})=\sum_{J_*}u^{J_*}F_{J_*}^{n_1^{(p_1)},\ldots, n_m^{(p_m)}}(u_{1*},\ldots, u_{m_*}),
\]
where $u^{J_*}=\prod_{ij}u_{ik}^{j_{ik}}$.  We let $L_{ij}=O_Y(E_{ij})$, $L_{ij}^{J_*}$ the restriction of $L_{ij}$ to $E^{J_*}$, and write $L^{J_*}_{i*}$ for the sequence $L^{J_*}_{i1},\ldots,  L^{J_*}_{ip_i}$ and let $\iota^{J_*}:E^{J_*}\to |E|$ be the inclusion. With these notations,  the divisor class $[E\to |E|]_{D_*}$ is  
\[
[E\to |E|]_{D_*}=\sum_{J_*}\iota^{J_*}_*([E^{J_*}; F_{J_*}^{n_1^{(p_1)},\ldots, n_m^{(p_m)}}(L^{J_*}_{1*},\ldots, L^{J_*}_{m*})]_{D_*}).
\]

Fix an index $J=(j_1,\ldots, j_m)\in \{0,1\}^m$. The divisor $E_i$ thus contains $E^J$ if and only if $j_i=1$.

To each $J=(j_1,\ldots, j_m)$ as above, the closed subscheme $E^J$ breaks up as a disjoint union of certain faces $E^{J_*}$, namely,  exactly for those $J_*=(J_1,\ldots, J_m)$ such that, if $j_i=0$, then $J_i=(0,\ldots, 0)=0^{p_i}$, and if $j_i=1$, then the index $J_i\in\{0,1\}^{p_i}$ contains exactly one 1; if this 1 appears in the $q$th spot, we write this as $J_i=e_q^{(p_i)}$. Since $E_{ij}\cap E_{ij'}=\0$ for $j\neq j'$, the face $E^{J_*}$ is empty if one $J_i$ contains more than one 1, and thus $E^J$ is the disjoint union of the faces $E^{J_*}$ with $J_i=0^{p_i}$ if $j_i=0$ and $J_i=e_{q_i}^{(p_i)}$ for some $q_i$ with $1\le q_i\le p_i$ if $j_i=1$. For such a $J_*$, we set $q_i=0$ if $j_i=0$, set $e_0^{(p_i)}=0^{p_i}$, set $q_*=(q_1,\ldots, q_m)$,  and write the index $J_*=(e_{q_1}^{(p_1)},\ldots, e_{q_m}^{(p_m)})$ as $J_*=J(q_*)$. Let $S(J)\subset \prod_{i=1}^m\{0,\ldots, p_i\}$ be the subset consisting of those $q_*$ such that $q_i=0$ if and only if $j_i=0$. In this notation, we have 
\[
E^J=\amalg_{q_*\in S(J)}E^{J(q_*)}.
\]

Take $q_*\in S(J)$. We claim that for each $i$ such that $j_i=1$, 
\[
\cn(L_i)(1^{D_*}_{E^{J(q_*)}})=\cn(L_{iq_i})(1^{D_*}_{E^{J(q_*)}})
\]
in $\Omega_*(E^{J(q_*)})_{D_*}$.
Indeed,  $E_{ij}\cap E^{J(q_*)}=\0$ for all $j\neq q_i$. Applying the relations $\<\sR^{Sect}_*\>(E^{J(q_*)})_{D_*}$ for the empty divisor, we have $\cn(L_{ij})(1^{D_*}_{E^{J(q_*)}})=0$ for all $j\neq q_i$. Noting that $L_i=\otimes_{j=1}^{p_i}L_{ij}$ and applying the formal group relations $\<\sR^{FGL}_*\>(E^{J(q_*)})_{D_*}$  we have
\[
\cn(L_i)(1^{D_*}_{E^{J(q_*)}})=F^{1,\ldots, 1}(\cn(L_{i1}),\ldots, \cn(L_{1p_i}))(1^{D_*}_{E^{J(q_*)}})=
\cn(L_{iq_i})(1^{D_*}_{E^{J(q_*)}}).
\]
Since $F_J^{n_1,\ldots, n_m}(u_1,\ldots, u_m)$ does not involve $u_i$ if $j_i=0$, this gives the relation
\begin{multline*}
F_J^{n_1,\ldots, n_m}(\cn(L_1),\ldots, \cn(L_m))(1^{D_*}_{E^{J(q_*)}})\\=
F_{J(q_*)}^{n_{1*},\ldots, n_{m*}}(\cn(L_{1*}),\ldots, \cn(L_{m*}))(1^{D_*}_{E^{J(q_*)}})
\end{multline*}
in $\Omega_*(E^{J(q_*)})_{D_*}$. 
Therefore
\begin{align*}
\sum_J\iota^J_*(&[E^J; F^{n_1,\ldots, n_m}_J(L_1^J,\ldots, L_m^J)]_{D_*})\\&=
\sum_J\iota^J_*(F^{n_1,\ldots, n_m}_J(\cn(L_1),\ldots, \cn(L_m))(1^{D_*}_{E^J}))\\
&=\sum_J\sum_{q\in S(J)} \iota^{J(q_*)}_*(F^{n_1,\ldots, n_m}_J(\cn(L_1),\ldots, \cn(L_m))(1^{D_*}_{E^{J(q_*)}}))\\
&=\sum_J\sum_{q\in S(J)} \iota^{J(q_*)}_*(F^{n_{1*},\ldots, n_{m*}}_{J(q_*)}(\cn(L_{1*}),\ldots, \cn(L_{m*}))(1^{D_*}_{E^{J(q_*)}}))\\
&=\sum_{J_*} \iota^{J_*}_*(F^{n_{1*},\ldots, n_{m*}}_{J_*}(\cn(L_{1*}),\ldots, \cn(L_{m*}))(1^{D_*}_{E^{J_*}}))\\
&=[E\to |E|]_{D_*}.
\end{align*}
\end{proof}

\begin{lemma}\label{lem:DivClass}  
Let $E$ be a simple normal crossing divisor on some $Y\in\Sm_k$. Let $D_1,\ldots, D_r$  be pseudo-divisors on $Y$. Suppose that $\id_Y$ is in $\sM(Y)_{D_*}$ and that $E$ is in good position with respect to $D_*$.
Then
\[
[E\to Y]_{D_*}=[Y; O_Y(E)]_{D_*}
\]
in $\Omega_*(Y)_{D_*}$.
\end{lemma}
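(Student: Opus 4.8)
The plan is to reduce the statement to the corresponding identity $[E\to Y]=[Y;O_Y(E)]$ in ordinary algebraic cobordism $\Omega_*(Y)$, which is \cite[lemma 3.1.5 (or its companion)]{CobordismBook}, and to check that the reduction is compatible with the refinement by $D_*$. Write $E=\sum_{i=1}^m n_iE_i$ with $E_i$ irreducible, and recall the definition
\[
[E\to Y]_{D_*}=\sum_J i_{J*}\bigl([E^J; F^{n_1,\ldots,n_m}_J(O_Y(E_1)^J,\ldots,O_Y(E_m)^J)]_{D_*}\bigr),
\]
where $i_J:E^J\to Y$ is the inclusion of the face. The first point to verify is that every class appearing here actually lives in $\Omega_*(Y)_{D_*}$: since $E$ is in good position with respect to $D_*$, remark~\ref{rem:GenPosition}(1) (i.e.\ lemma~\ref{lem:AdmissibleSupport}) shows $E$ is admissible, so each $\id_{E^J}$ lies in $\sM(E^J)_{D_*}$ and each $i_J$ lies in $\sM(Y)_{D_*}$; hence the refined divisor class is defined, and so are the $1$st Chern class operators and pushforwards used to manipulate it.

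The heart of the argument is to run the proof of the unrefined identity verbatim inside $\Omega_*(Y)_{D_*}$. That proof proceeds by induction on the number of components $m$ and on $\sum_i n_i$, using at each step only: (a) the relation $\<\sR^{Sect}_*\>$ identifying $\cn(O_Y(D))(1_Y)$ with the class of a transverse smooth section when that section is suitably positioned; (b) the formal group law relations $\<\sR^{FGL}_*\>$ to split $n\cdot_F u$ and to combine Chern classes of tensor products; and (c) pushforward along the inclusions of components and faces. Each of these operations exists on $\Omega_*(-)_{D_*}$ by the constructions in \S\ref{sec:DivI}, and the maps $\res_{D_*/\0}$ are natural with respect to all of them, so the formal manipulations are literally the same strings of equalities. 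The only genuinely new thing to check is that every time the unrefined proof invokes relation (a) — passing from a Chern class to a transversely-cut smooth divisor $Z$ — the subscheme $Z$ is in good position with respect to $D_*$, as required by definition~\ref{Def:preOmegaD}. In the unrefined proof these $Z$'s are always faces (or intersections with smooth hypersurfaces supported on $E$) of the simple normal crossing divisor $E$, or divisors obtained from $E$ by peeling off one component; lemma~\ref{lem:GoodPosition} parts (2), (5), (6) (applied with $D=D_*$ empty-leading or not, as appropriate), together with remark~\ref{rem:GenPosition}(2), exactly guarantee that such $Z$ remain in good position. So each application of $\<\sR^{Sect}_*\>$ is legitimate in the refined group.

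I expect the main obstacle to be purely bookkeeping: tracking, through the inductive unrefined proof, the precise chain of divisors that get cut out and matching each one to the applicable clause of lemma~\ref{lem:GoodPosition}, since the good-position hypothesis is not automatically inherited by arbitrary effective subdivisors of $E$ but only by those supported in $|E|$ (remark~\ref{rem:GenPosition}(2)) or by faces. A clean way to organize this is to first reduce, via lemma~\ref{lem:Additivity}, to the case where the components $E_i$ are smooth but not necessarily irreducible, pick one component $E_1$ with multiplicity $n_1\ge 1$, and write $O_Y(E)=O_Y(E_1)\otimes O_Y(E')$ with $E'=E-E_1$; applying $\<\sR^{FGL}_*\>$ gives $[Y;O_Y(E)]_{D_*}=F_\L(\cn(O_Y(E_1)),\cn(O_Y(E')))(1_Y^{D_*})$, and then $\<\sR^{Sect}_*\>$ replaces $\cn(O_Y(E_1))(1_Y^{D_*})$ by $[E_1; O_Y(E_1)|_{E_1},\ldots]_{D_*}$ — legitimate because $E_1$, being a face of $E$, is in good position by remark~\ref{rem:GenPosition}(2). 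Iterating and regrouping the formal-group-law terms according to the face decomposition reproduces $\sum_J i_{J*}[E^J;F^{n_*}_J(\cdots)]_{D_*}=[E\to Y]_{D_*}$, where at each stage the newly-cut divisor is a face of $E$ restricted to a smaller face, handled by lemma~\ref{lem:GoodPosition}(5). This is exactly the structure of the unrefined proof, now carried out one level down, and the naturality of $\res_{D_*/\0}$ confirms the refined identity maps to the known unrefined one.
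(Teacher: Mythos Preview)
Your proposal is essentially correct and follows the paper's strategy. One clarification: your opening framing---reducing to the unrefined identity via naturality of $\res_{D_*/\0}$---cannot work as a genuine reduction, since at this point $\res_{D_*/\0}$ is not yet known to be injective (that is theorem~\ref{thm:Moving}, which depends on this very lemma). You correctly pivot to re-running the unrefined argument inside $\Omega_*(Y)_{D_*}$, and that is exactly what the paper does.

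The paper organizes the computation more directly than your inductive peeling. Starting from the definition
\[
[E\to Y]_{D_*}=\sum_J i^J_*\bigl(F^{n_*}_J(\cn(O_Y(E_1)^J),\ldots)(1^{D_*}_{E^J})\bigr),
\]
it uses lemma~\ref{lem:GoodPosition}(5) to see that each codimension-one face inclusion $E^J\subset E^{J'}$ (with $J'$ obtained from $J$ by lowering one $j_i$ from $1$ to $0$) is in good position with respect to $D_*$. Repeated applications of $\langle\sR^{Sect}_*\rangle$ then give, for any power series $G$,
\[
i^J_*\bigl(G(\cn(L^J_*))(1^{D_*}_{E^J})\bigr)=\cn(L_*)^J\, G(\cn(L_*))(1^{D_*}_Y).
\]
Summing over $J$ reconstitutes $F^{n_1,\ldots,n_m}(\cn(O_Y(E_1)),\ldots)(1^{D_*}_Y)$, which equals $\cn(O_Y(E))(1^{D_*}_Y)$ by $\langle\sR^{FGL}_*\rangle$. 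This ``bottom-up'' organization replaces your ``iterating and regrouping'' step with a single clean identity, but the substance is identical and the key good-position verification is the same lemma~\ref{lem:GoodPosition}(5) you identified.
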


\begin{proof} Write $E=\sum_{i=1}^mn_iE_i$ with the $E_i$ irreducible and let , $i:|E|\to Y$ be the inclusion. For $J=(j_1,\ldots, j_m)\in \{0, 1\}^m$, let $E^J$ be the corresponding face of $E$ with inclusions $\iota^J:E^J\to |E|$, $i^J:E^J\to Y$. If $j_i=1$ for some $i$, let $J'=(j_1,\ldots, j_i-1,\ldots, j_m)$. Then by  lemma~\ref{lem:GoodPosition}(5),  the subscheme   $E^J$ of  $E^{J'}$ is a divisor in good position with respect to $D_*$.  Repeated applications of the relations $\<\sR^{Sect}_*\>(-)_{D_*}$ thus give us the relation
\[
i^J_*\left(F(\cn(L^J_1),\ldots, \cn(L^J_m)))(1_{E^J}^{D_*})\right)=(\cn(L_*)^JF(\cn(L^J_1),\ldots, \cn(L^J_m)))(1^{D_*}_Y)
\]
for arbitrary power series $F(u_1,\ldots, u_m)\in \L_*[[u_1,\ldots, u_m]]$ and line bundles $L_1,\ldots, L_m$ on $Y$, where $L_i^J:=i_J^*L_i$ and $\cn(L_*)^J:=\cn(L_1)^{j_1}\circ\ldots\circ\cn(L_m)^{j_m}$. Applying this to the definition of $[E\to |E|]_{D_*}$ gives us
\begin{align*}
[E\to &Y]_{D_*}\\&=i_*([E\to |E|]_{D_*})\\
&=i_*(\sum_{J}\iota^J_*(F^{n_1,\ldots, n_m}_J(\cn(O_Y(E_1)^J),\ldots, \cn(O_Y(E_m)^J))(1^{D_*}_{E^J}))\\
&=\sum_{J} (\cn(O_Y(E_*))^J\circ F^{n_1,\ldots, n_m}_J(\cn(O_Y(E_1)),\ldots, \cn(O_Y(E_m))))(1^{D_*}_Y) \\
&=F^{n_1,\ldots, n_m}(\cn(O_Y(E_1)),\ldots, \cn(O_Y(E_m)))(1^{D_*}_Y)\\
&=\cn(O_Y(E))(1^{D_*}_Y) =[Y; O_Y(E)]_{D_*}.
\end{align*}
\end{proof}

\begin{lemma}\label{lem:ChernProd} Take  $Y$ in $\Sm_k$ with pseudo-divisors  $D_1,\ldots, D_r$
on $Y$ such that $\id_Y$ is in $\sM(Y)_{D_*}$. Let $Z_1$, $Z_2$ be smooth disjoint divisors on
$Y$. \\[5pt]
(1) Assume that at least one of $Z_1, Z_2$ is in good position with respect to $D_*$.   Then
\[
\cn(O_Y(Z_1))\circ\cn(O_Y(Z_2))(1_Y^{D_*})=0
\]
in $\Omega_*(Y)_{D_*}$.\\[5pt]
(2) Let $D$ be an effective  Cartier divisor on $Y$,  let $C$ be an effective Cartier divisor on $Y$ with support contained in $|D|$ and let $i:|C|\to Y$ be the inclusion. Suppose that $\id_Y$ is in $\sM(Y)_{D, D_*}$ and that $Z_1+Z_2$  is in good position with respect to $D, D_*$. Then   $\cn(O_Y(Z_1))\circ\cn(O_Y(Z_2))(1_{C^J}^{D_*})=0$ in $\Omega_*(C^J)_{D_*}$  for each face $C^J$ of $C$, and
\[
\cn(i^*O_Y(Z_1))\circ\cn(i^*O_Y(Z_2))([C\to|C|]_{D_*})=0
\]
in $\Omega_*(|C|)_{D_*}$.
\end{lemma}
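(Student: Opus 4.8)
The plan is to deduce everything from the section relation $\<\sR^{Sect}_*\>$ of Definition~\ref{Def:preOmegaD}, the projection formula $\cn(L)\circ f_*=f_*\circ\cn(f^*L)$ (which holds already on cobordism cycles, hence on $\Om_*(-)_{D_*}$), and the fact that the operators $\cn(-)$ on $\Om_*(-)_{D_*}$ commute; we may and do assume all schemes occurring are irreducible, passing to connected components otherwise. For part (1), by symmetry assume $Z_1$ is in good position with respect to $D_*$; then $\id_{Z_1}\in\sM(Z_1)_{D_*}$ by remark~\ref{rem:GenPosition}(1), and since $Z_1$ is a smooth effective Cartier divisor its canonical section of $O_Y(Z_1)$ is transverse to the zero section with zero locus $Z_1$ in good position, so the section relation on $Y$ gives
\[
\cn(O_Y(Z_1))(1_Y^{D_*})=(i_1)_*(1_{Z_1}^{D_*}),
\]
$i_1:Z_1\to Y$ the inclusion. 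Applying $\cn(O_Y(Z_2))$, using commutativity and then the projection formula,
\[
\cn(O_Y(Z_1))\circ\cn(O_Y(Z_2))(1_Y^{D_*})=(i_1)_*\bigl(\cn(i_1^*O_Y(Z_2))(1_{Z_1}^{D_*})\bigr).
\]
Because $Z_1\cap Z_2=\0$, the bundle $i_1^*O_Y(Z_2)$ carries a nowhere-vanishing section (its zero locus is $Z_1\cap Z_2$), so the section relation on $Z_1$ applied to that section yields $\cn(i_1^*O_Y(Z_2))(1_{Z_1}^{D_*})=0$, proving (1).

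\emph{Part (2), the faces.} Since $D$ is an honest effective Cartier divisor, $|D|\neq Y$, so $D$ is the leading pseudo-divisor of $\id_Y$ with respect to $D,D_*$; from $\id_Y\in\sM(Y)_{D,D_*}$ and lemma~\ref{lem:AdmissibleIntersection} we get that $\Div D$ is a simple normal crossing divisor on $Y$ admissible with respect to $D_*$, while good position of $Z_1+Z_2$ with respect to $D,D_*$ says exactly that $Z_1+Z_2+\Div D$ is a simple normal crossing divisor on $Y$. As $|C|\subset|D|$ and $Y$ is smooth, the components of $C$ are components of $\Div D$, so $C$ is a simple normal crossing divisor admissible with respect to $D_*$ and each face $C^J$ of $C$ is a face of $\Div D$; in particular $\id_{C^J}\in\sM(C^J)_{D_*}$. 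Fix $C^J$. Since $Z_1\cap Z_2=\0$, at most one of $Z_1,Z_2$ can contain $C^J$: if $C^J\subset Z_k$, then $Z_{k'}\cap C^J=\0$ for the other index, so $O_Y(Z_{k'})|_{C^J}$ has a nowhere-vanishing section, the section relation gives $\cn(O_Y(Z_{k'}))(1_{C^J}^{D_*})=0$, and the claim follows after moving this factor innermost by commutativity. Otherwise $C^J$ lies in neither $Z_1$ nor $Z_2$; then $Z_k+\Div D$ is a subdivisor of $Z_1+Z_2+\Div D$, so $Z_k$ is in good position with respect to $D,D_*$, and by a codimension count no irreducible component of the face $C^J$ of $\Div D$ lies in $Z_k$. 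Lemma~\ref{lem:GoodPosition}(2) applied with $E=Z_k$ to each such component then shows that $Z_k\cap C^J$ is a smooth divisor on $C^J$ (smooth because $Z_k$, being smooth, has disjoint components) in good position with respect to $D_*$. Thus $Z_1\cap C^J$ and $Z_2\cap C^J$ are smooth disjoint divisors on $C^J$, both in good position, and since $O_{C^J}(Z_k\cap C^J)=O_Y(Z_k)|_{C^J}$, part (1) applied on $C^J$ gives $\cn(O_Y(Z_1))\circ\cn(O_Y(Z_2))(1_{C^J}^{D_*})=0$.

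\emph{Part (2), the divisor class.} Write $C=\sum_{i=1}^mn_iC_i$ with the $C_i$ irreducible, let $i:|C|\to Y$ and $\iota^J:C^J\to|C|$ be the inclusions, so $i\circ\iota^J$ is the inclusion $C^J\to Y$. Expanding the divisor class and applying the projection formula for each $\iota^J$,
\begin{multline*}
\cn(i^*O_Y(Z_1))\circ\cn(i^*O_Y(Z_2))([C\to|C|]_{D_*})\\
=\sum_J\iota^J_*\bigl(F^{n_1,\ldots,n_m}_J(\cn(O_Y(C_1)^J),\ldots,\cn(O_Y(C_m)^J))(\theta_J)\bigr),
\end{multline*}
where $\theta_J:=\cn(O_Y(Z_1)|_{C^J})\circ\cn(O_Y(Z_2)|_{C^J})(1_{C^J}^{D_*})$ and we have used that all the operators $\cn(-)$ on $\Om_*(C^J)_{D_*}$ commute in order to let $\theta_J$ act first. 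By the previous paragraph $\theta_J=0$ for every $J$, so the sum vanishes.

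\emph{Main obstacle.} The crux is the second paragraph: identifying, for each face $C^J$ of $C$, the restrictions $Z_k\cap C^J$ as smooth divisors on $C^J$ that are in good position with respect to $D_*$, so that the reduction to part (1) is legitimate. This is precisely the point at which the strengthened hypothesis — that $Z_1+Z_2$ is in good position with respect to the whole sequence $D,D_*$, rather than merely a simple normal crossing divisor on $Y$ — is used, through lemma~\ref{lem:GoodPosition}(2) and the observation that the faces of $C$ are faces of $\Div D$.
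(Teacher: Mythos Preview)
Your proof is correct and follows essentially the same approach as the paper's. The only cosmetic differences are that you prove (1) first and then (2), while the paper reduces (2) to (1) before proving (1), and in (1) you take $Z_1$ in good position where the paper takes $Z_2$; the substance of both arguments---section relation for the divisor in good position, then the empty-divisor section relation on it, and for (2) the face-by-face case split according to containment in $Z_1\cup Z_2$ together with lemma~\ref{lem:GoodPosition}(2)---is identical.
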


\begin{proof} We first prove (2), assuming (1). From the hypotheses in (2)  it follows that $D$ is the leading pseudo-divisor for $\id_Y$ with respect to $D, D_*$. By  lemma~\ref{lem:AdmissibleIntersection}, $C$ is a simple normal crossing divisor on $Y$, admissible with respect to $D_*$; in particular, the divisor class $[C\to|C|]_{D_*}$ is defined. 

Write $C=\sum_{j=1}^mn_iC_j$ with each $C_i$ irreducible. For each $J\in\{0,1\}^m$, write $C^J$ as a disjoint union of
irreducible components, $C^J=\amalg_jC^J_j$, and let $\iota^J_j:C^J_j\to|C|$, $i^J_j:C^J_j\to Y$ be the inclusions. Then each inclusion $C^J_j\to Y$ is in $\sM(Y)_{D_*}$, so the unit elements $1_{C^J_j}^{D_*}\in\sM(C_j^J)_{D_*}$ are all defined.

Let   $\eta_{J,j}=\cn(i^{J*}_jO_Y(Z_1))\circ\cn(i^{J*}_jO_Y(Z_2))(1_{C^J_j}^{D_*})$.
In $\Omega_*(|C|)_{D_*}$ we have the identity
\begin{multline*}
\cn(i^*O_Y(Z_1))\circ\cn(i^*O_Y(Z_2))([C\to|C|]_{D_*})\\=
\sum_{J,j}F^{n_1,\ldots,n_m}_J(\cn(i^*O_X(C_1)),\ldots, \cn(i^*O_Y(C_m)))(\iota^J_{j*}(\eta_{J,j})).
\end{multline*}
If $C^J_j$ is not contained in $Z_1\cup Z_2$, then $Z_1\cap C^J_j$ and $Z_2\cap C^J_j$ are smooth disjoint divisors on $C^J_j$; by lemma~\ref{lem:GoodPosition}(2), these are both in good position with respect to $D_*$.  Thus (1) (for $Y=C^J_j$) implies that  $\eta_{J,j}=0$.

If $C^J_j$ is contained in say $Z_1$, then $C^J_j\cap Z_2=\0$ and thus $i^{J*}_jO_Y(Z_2)\cong O_{C^J_j}$. Using the relations $\<\sR^{Sect}_*\>(C^J_j)_{D_*}$ in the case of an empty divisor, we see that $\cn(i^{J*}_jO_Y(Z_2))(1_{C^J_j}^{D_*})=0$,  so $\eta_{J,j}=0$  in this case as well, and (2) follows.

For (1), let $i_j:Z_j\to Y$ be the inclusion. Since $\cn(O_Y(Z_1))$ and $\cn(O_Y(Z_2))$ commute, we may assume that $Z_2$ is in good position with respect to $D_*$. Using the relations $\<\sR^{Sect}_*\>(Y)_{D_*}$ gives
\[
\cn(O_Y(Z_1))\circ\cn(O_Y(Z_2))(1_{Y}^{D_*})=i_{Z_2*}\left(\cn(i_2^*O_X(Z_1))(1_{Z_2}^{D_*})\right)
\]
in $\Omega_*(Y)_{D_*}$. Using  $\<\sR^{Sect}_*\>(Z_2)_{D_*}$, again in the case of an empty
divisor,  gives $\cn(i_2^*O_X(Z_1))(1_{Z_2}^{D_*})= 0$ in $\Omega_*(Z_2)_{D_*}$.
\end{proof}

\begin{lemma}\label{lem:StrongDim} Let $f:X\to Z$ be a morphism in $\Sch_k$ with $Z$ in $\Sm_k$, and let $L_1,\ldots, L_m$ be line bundles on $Z$ with $m>\dim_kZ$. Let $D_1,\ldots, D_r$ be  pseudo-divisors on $X$. Then the operator $\cn(f^*L_1)\circ\ldots\circ\cn(f^*L_m)$ vanishes on $\Omega_*(X)_{D_*}$.
\end{lemma}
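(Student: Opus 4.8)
The plan is to reduce the vanishing to the defining dimension relations $\langle\sR^{Dim}_*\rangle$, exactly as for ordinary cobordism in \cite{CobordismBook}, but taking care that every cobordism cycle produced along the way lies in $\sM(X)_{D_*}$. It suffices to check the statement on a generating cobordism cycle $\eta=[g:W\to X,M_1,\ldots,M_s]$ with $g$ admissible with respect to $D_*$ and $W$ irreducible. Applying $\cn(f^*L_i)$ just appends the line bundles $g^*f^*L_1,\ldots,g^*f^*L_m$, all of which are pulled back from $Z$ via the composite $h:=f\circ g:W\to Z$. So $\cn(f^*L_1)\circ\cdots\circ\cn(f^*L_m)(\eta)=[g:W\to X,M_1,\ldots,M_s,h^*L_1,\ldots,h^*L_m]$, and the issue is that this is only a dimension relation if the map $h$ factors through a smooth $k$-scheme of dimension $<m$. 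Since $Z$ itself has dimension $<m$, the cycle $[g:W\to X,\,h^*L_1,\ldots,h^*L_m,\,M_1,\ldots,M_s]$ is literally of the form appearing in the definition of $\langle\sR^{Dim}_*\rangle(X)_{D_*}$, \emph{provided} $h:W\to Z$ is smooth quasi-projective. In general it is not, so the first real step is to replace $W$ by something over which a smooth projection to a low-dimensional base is available.

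The standard device is to factor $h$ through a projective bundle: pick a very ample line bundle and an embedding $W\hookrightarrow Z\times_k\P^N$ over $Z$, so that $h$ factors as $W\hookrightarrow Z\times\P^N\xrightarrow{pr_1}Z$. Then use the projective bundle formula / the section relations to rewrite $[g:W\to X,\ldots]$ in terms of classes supported on $Z\times\P^N$, where the projection to $Z$ is smooth and the base $Z$ has dimension $<m$. Concretely one replaces $W$ by the vanishing locus of a transverse section of an auxiliary bundle on $Z\times\P^N$, invoking Bertini to arrange smoothness and good position; this is precisely the point at which the excerpt says Bertini shortens the argument over a field of characteristic zero. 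The key bookkeeping is that each intermediate smooth subvariety must be shown to lie in $\sM(-)_{D_*}$ and each new divisor must be in good position with respect to $D_*$, so that the section relations $\langle\sR^{Sect}_*\rangle(-)_{D_*}$ are legitimately available; this is exactly what good position was designed to guarantee, via remark~\ref{rem:GenPosition}(1) and lemma~\ref{lem:GoodPosition}.

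Once $h$ has been arranged to be a smooth quasi-projective morphism with target of dimension $<m$ (at the cost of pushing forward along a projective map, which is harmless since push-forward is defined on $\sZ_*(-)_{D_*}$, $\uu{\Om}_*(-)_{D_*}$ and $\Om_*(-)_{D_*}$ and commutes with $\cn$), the cycle in question becomes $f_*$ of a generator of $\langle\sR^{Dim}_*\rangle$, hence is zero in $\uu{\sZ}_*(X)_{D_*}$ and a fortiori in $\Om_*(X)_{D_*}$. An alternative, and perhaps cleaner, route is to mimic \cite[Lemma 6.1.13]{CobordismBook} directly: first reduce to the case $X=W$ smooth and $g=\id$ by the projection formula (as is done repeatedly in the proofs above, replacing $X$ by $Y$ and $D_*$ by $f^*D_*$), then induct on $m-\dim_kZ$, using a section of one of the $L_i$ together with Bertini to pass to a smooth divisor in good position on which the remaining operators still have too many line bundles relative to the base.

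The main obstacle is not the formal manipulation but keeping the refinement under control: one must ensure that the smooth varieties and divisors introduced by the Bertini/projective-bundle argument remain admissible for $D_*$ and in good position, so that every relation invoked ($\langle\sR^{Sect}_*\rangle$, $\langle\sR^{FGL}_*\rangle$, $\langle\sR^{Dim}_*\rangle$ in the refined groups) actually holds. Since the excerpt has already established (lemmas~\ref{lem:AdmissibleSupport}, \ref{lem:AdmissibleIntersection}, \ref{lem:GoodPosition}) that good position is stable under intersection with the leading divisor and restriction to faces, and since all operations are compatible with $\res_{D_*/\0}$ so that the corresponding vanishing in $\Om_*(X)$ can be leveraged, the refined statement follows once the geometric input is in place. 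I expect the write-up to be short, citing the characteristic-zero version of \cite[Lemma 6.1.13]{CobordismBook} and remarking that the good-position hypotheses make the argument go through verbatim in $\Om_*(-)_{D_*}$.
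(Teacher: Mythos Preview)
Your second, ``alternative'' route is exactly what the paper does: reduce to $X=Y\in\Sm_k$, $g=\id_Y$, induct on $\dim_kZ$, and at each step take a section of one of the $L_i$ whose zero-locus $\bar Z\subset Z$ is smooth and whose pullback $H$ to $Y$ satisfies $H+\Div D_1$ simple normal crossing (both by Bertini), then invoke lemma~\ref{lem:DivClass} to trade $\cn(f^*L_m)(1_Y^{D_*})$ for $[H\to Y]_{D_*}$ and apply the induction hypothesis over $\bar Z$. Two small corrections: the induction variable must be $\dim_kZ$, not $m-\dim_kZ$ (the latter is invariant under the step $(m,\dim_kZ)\mapsto(m-1,\dim_kZ-1)$, so your stated induction never terminates); and before invoking Bertini you need to reduce to very ample $L_i$ via the formal group law, as in the proof of \cite[lemma~3.2.6]{CobordismBook}, otherwise $L_m$ need not have a section with smooth zero-locus.

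Your first approach, by contrast, does not work as written. The class $[g:W\to X,\ldots]$ lives in $\Omega_*(X)_{D_*}$; the factorization $W\hookrightarrow Z\times\P^N\to Z$ gives you no map from $Z\times\P^N$ to $X$, so there is no way to ``rewrite $[g:W\to X,\ldots]$ in terms of classes supported on $Z\times\P^N$'' while retaining the push-forward to $X$. The point of the dimension relation is that the \emph{source} $W$ admits a smooth map to a low-dimensional base, and there is no mechanism here for replacing $W$ by $Z\times\P^N$ inside $\Omega_*(X)_{D_*}$. Drop this paragraph and just carry out the second route.
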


\begin{proof} We proceed by induction on $\dim_kZ$. Since the operators $\cn(L)$ are $\L_*$-linear and commute with each other, it suffices to show that the operator in question vanishes on elements $g:Y\to X$ of $\sM(X)_{D_*}$. The identity 
\[
\cn(f^*L_1)\circ \ldots\circ\cn(f^*L_m)(g:Y\to X)=g_*((\id_Y,(fg)^*L_1,\ldots, (fg)^*L_m))
\]
reduces us to the case $X=Y$,  $g=\id_Y$, that is, it suffices to show that
\[
(\id_Y,f^*L_1,\ldots, f^*L_m)=0\text{ in }\Omega_*(Y)_{D_*},
\]
assuming $\id_Y$ is in $\sM(Y)_{D_*}$ and $m>\dim_kZ$.

We may assume that $Y$ is irreducible. As before, we may assume that $|D_i|\neq Y$ for all $i$.  Using the formal group law, we reduce to the case of very ample line bundles $L_i$ (see for example the proof of \cite[lemma 3.2.6]{CobordismBook}).

Assume that $r\ge 1$. Then $D_1$ is the leading pseudo-divisor for $\id_Y$ and by lemma~\ref{lem:AdmissibleIntersection}, $\Div D_1$ is a simple normal crossing divisor on $Y$.

If $\dim_kZ=0$, all the line bundles are trivial, hence have a nowhere vanishing section. We may then use the relations in $\<\sR_*^{Sect}\>(Y)_{D_*}$ (for the empty divisor) to conclude that $(\id_Y,f^*L_1,\ldots, f^*L_m)=0$.

Suppose now that $\dim_kZ>0$. Let $s$ be a section of $L_m$. By Bertini's theorem, we may choose $s$ so that $s=0$ is a smooth divisor $\bar{i}:\bar{Z}\to Z$ on $Z$. Let $H$ be the divisor of $f^*s$ with inclusion $i:|H|\to Y$, and let $\bar{f}:|H|\to \bar{Z}$ be the induced morphism.

By Bertini's theorem again, we may choose $s$ so  that $H+D_1$ is a simple normal crossing divisor on $Y$. In other words, $H$ is in good position with respect to $D_*$. 
 
By lemma~\ref{lem:DivClass}, we have the identity in $\Omega_*(Y)_{D_*}$
\[
[H\to Y]_{D_*}=[Y; O_Y(H)]_{D_*}=[Y; f^*L_m]_{D_*}.
\]
Thus
\begin{align*}
(\id_Y,f^*L_1,&\ldots, f^*L_m)\\
&=
\cn(f^*L_1)\circ\ldots\circ\cn(f^*L_{m-1})([H\to Y]_{D_*})\\
&=i_*(\cn(\bar{f}^*(\bar{i}^*L_1))\circ\ldots\circ\cn(\bar{f}^*(\bar{i}^*L_{m-1}))
([H\to|H|]_{D_*})).
\end{align*}
As this last element is zero by our induction hypothesis, the lemma is proved in case $r\ge1$. If $r=0$, the same proof works, except that we need only assume that $\bar{Z}$ and $H$ are smooth.
\end{proof}

\section{Intersection with a pseudo-divisor} \label{sec:Intersection}
\subsection{The intersection map} We construct the intersection map and derive some of its basic properties.

Let $D, E_1,\ldots, E_r$ be pseudo-divisors on $X$.  Let $f:Y\to X$ be in $\sM(X)_{D, E_*}$ with $Y$ irreducible, and consider a cobordism cycle 
\[
\eta:=(f:Y\to X, L_1,\ldots, L_m)\in \sZ_*(X)_{D, E_*}. 
\]
Let $C$ be a pseudo-divisor  on $X$ with    $C$ supported in $D$. In particular, if $f(Y)\not\subset |D|$, then $\Div f^*C$ is an effective  Cartier divisor  with support contained in $|\Div f^*D|$. 

We define the element $\un{C}(\eta)_{E_*}\in\L_*\otimes\un{\sZ}_*(|D|)_{E_*}$ as follows: Suppose $f(Y)\subset |D|$. Let $f^D:Y\to |D|$ be the morphism induced by $f$; in this case $\sZ_*(Y)_{D, E_*}=\sZ_*(Y)_{E_*}$. Let $\eta_Y:=(\id_Y, L_1,\ldots, L_m)\in \sZ_*(Y)_{E_*}$.   We define
\[
\un{C}(\eta)_{E_*}:=1\otimes f_*^D(\cn(f^*\sO_X(C))(\eta_Y))\in\L_*\otimes\un{\sZ}_{*-1}(|D|)_{E_*}.
\]

If $f(Y)\not\subset |D|$, then $D$ is the leading pseudo-divisor for $f$, hence $\Div f^*D$ is a simple normal crossing divisor on $Y$ and thus $\tilde{C}:=\Div f^*C$ is also a simple normal crossing divisor on $Y$, with $|\tilde C|\subset |f^*D|$. By lemma~\ref{lem:AdmissibleIntersection}, $\Div D$ is admissible with respect to $E_*$. Since each face of $\tilde{C}$ is a face of $\Div f^*D$,  $\tilde{C}$ is also admissible and the divisor class $\un{[\tilde C\to|f^*D|]}_{E_*}\in \L_*\otimes\un{\sZ}_*(|f^*D|)_{E_*}$ is defined. We let $f^D:|f^*D|\to |D|$ be the restriction of $f$, $L_i^D$ the restriction of $L_i$ to $|f^*D|$, and define
\[
\un{C}(\eta)_{E_*}:=f^D_*(\cn(L_1^D)\circ\ldots\circ\cn(L_m^D)(\un{[\tilde C\to|f^*D|]}_{E_*}))\in\L_*\otimes\un{\sZ}_{*-1}(|D|).
\]
We extend this operation to a homomorphism  $\un{C}(-)_{E_*}:\L_*\otimes\sZ_*(X)_{D, E_*}\to\L_*\otimes\un{\sZ}_{*-1}(|D|)_{E_*}$ by $\L_*$-linearity.

\begin{definition} Let $D, E_1,\ldots, E_r$ be pseudo-divisors on $X$ and  let $C$ be a  pseudo-divisor on $X$ supported in  $D$.  Let $\can: \L_*\otimes\un{\sZ}_{*-1}(|D|)_{D, E_*}\to \Omega_*(|D|)_{D, E_*}$ be the canonical surjection. The homomorphism
\[
C(-)_{E_*}:\L_*\otimes\sZ_*(X)_{D, E_*}\to\Omega_{*-1}(|D|)_{E_*}
\] 
is defined to be the composition
\[
\L_*\otimes\sZ_*(X)_{D, E_*}\xrightarrow{\un{C}(-)_{E_*}} [\L_*\otimes\un{\sZ}_*(|D|)_{E_*}]_{*-1}\xrightarrow{\can} \Omega_{*-1}(|D|)_{E_*}.
\]

We will sometimes drop the subscript $E_*$, writing $C(-)$ for  $C(-)_{E_*}$ and $\un{C}(-)$ for  $\un{C}(-)_{E_*}$, if the context carries the meaning.  We will also often ignore the shift by -1 in the grading.
\end{definition}

The next two results follow directly from the definitions:

\begin{lemma}\label{lem:ProjectionFormula} Let $X$ be a finite type $k$-scheme with pseudo-divisors $D$, $E_1,\ldots, E_r$ and let $C$ be a pseudo-divisor on $X$ supported in $D$. Let  $g:X'\to X$ be a morphism of finite type and let $g_D:|g^*D|\to |D|$ be the restriction of $g$.
\\
(1) Suppose that $g$ is projective. Let   $\eta$ be in $\sZ_*(X')_{D, E_*}$.  Then $g_*\eta$ is in $\sZ_*(X)_{D, E_*}$, and
\[
g_{D*}(\un{g^*C}(\eta)_{E_*})=\un{C}(g_*\eta)_{E_*}
\]
in $\L_*\otimes\un{\sZ}_*(|D|)_{E_*}$.\\
(2) Suppose that $g$ is smooth and quasi-projective. Let $\eta$ be in $\sZ_*(X)_{D, E_*}$. Then $g^*\eta$ is in $\sZ_*(X')_{D, E_*}$, $g_D$ is smooth and quasi-projective, and
\[
g_D^*(\un{C}(\eta)_{E_*})=\un{g^*C}(g^*\eta)_{E_*}
\]
in $\L_*\otimes\un{\sZ}_*(|g^*D|)_{E_*}$.
\end{lemma}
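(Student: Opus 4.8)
The plan is to unwind both assertions directly from the definition of $\un{C}(-)_{E_*}$, splitting into the two cases that appear in that definition --- whether or not the irreducible components of $Y'$ map into $|g^*D|$ (for part (1)) or whether or not the irreducible components of $Y$ map into $|D|$ (for part (2)). In each case the statement reduces to a pair of already-proven facts about cobordism cycles: the compatibility of the divisor class $\un{[\tilde C\to|\tilde C|]}_{E_*}$ with $f_*$ and $f^*$ (immediate from the formula defining the refined divisor class, together with the fact that $\iota^J_*$, $i_J^*$ commute with pushforward and flat pullback), and the analogous naturality of the Chern-class operators $\cn(L)$ with respect to $f_*$, $f^*$ --- these last are just the projection formula and flat base change for $\cn$, which hold on $\L_*\otimes\un{\sZ}_*(-)$ exactly as they do on $\L_*\otimes\un{\sZ}_*(-)_{D_*}$ since (as recorded after Remark~\ref{rem:Injective}) all relations and compatibilities valid on $\un{\sZ}_*(-)$ hold for the refined groups.

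First I would fix a single generator $\eta=(f':Y'\to X',L'_1,\ldots,L'_m)$ of $\sZ_*(X')_{D,E_*}$ with $Y'$ irreducible, in part (1). The fact that $g_*\eta\in\sZ_*(X)_{D,E_*}$ is the statement that $g\circ f'$ is admissible for $D,E_*$, which follows since $\id_{Y'}\in\sM(Y')_{g^*(D),g^*(E_*)}=\sM(Y')_{(g\circ f')^*(D),(g\circ f')^*(E_*)}$ and admissibility depends only on the pulled-back pseudo-divisors on $Y'$ --- equivalently, Remark following Definition~\ref{def:AdmissibleMaps}(1). For the identity: if $f'(Y')\subset|g^*D|$ then $(g\circ f')(Y')\subset|D|$, the two morphisms to the supports factor as $Y'\xrightarrow{(f')^{g^*D}}|g^*D|\xrightarrow{g_D}|D|$, and the claimed equality is just functoriality of pushforward applied to $(g\circ f')^{D}_* = g_{D*}\circ (f')^{g^*D}_*$ together with the projection formula for $\cn$. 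If $f'(Y')\not\subset|g^*D|$ then $g^*D$ is the leading pseudo-divisor for $f'$, $D$ is the leading pseudo-divisor for $g\circ f'$, $\Div (f')^*(g^*C)=\Div (g\circ f')^*C$ as Cartier divisors on $Y'$, and hence the two refined divisor classes $\un{[\Div(f')^*g^*C\to|(f')^*g^*D|]}_{E_*}$ and (the restriction to $|(g f')^*D|$ of) $\un{[\Div(gf')^*C\to|(gf')^*D|]}_{E_*}$ agree; pushing forward along $(g f')^{D}=g_D\circ (f')^{g^*D}$ and using the projection formula for the $\cn(L^D_i)$ gives the result. Summing over generators and extending $\L_*$-linearly finishes part (1).

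For part (2), with $g$ smooth quasi-projective of relative dimension $d$, I would argue symmetrically. That $g^*\eta\in\sZ_*(X')_{D,E_*}$ and that $g_D:|g^*D|\to|D|$ is smooth quasi-projective are formal: pulling back along a smooth morphism preserves admissibility (again because admissibility is tested on $Y':=X'\times_X Y$, where $\id_{Y'}$ lies in the relevant monoid by flat base change of simple normal crossing conditions --- the faces of $\Div g^* f^*D_i$ are the preimages of the faces of $\Div f^*D_i$), and $|g^*D|=X'\times_X|D|$ since we assume the support of a pseudo-divisor is its zero locus. Here one should decompose $Y'=X'\times_X Y$ into its irreducible components; over each, $f$ either maps into $|D|$ or not, and in the latter case $\Div (p_2)^*f^*D$ is the simple normal crossing divisor $p_2^{-1}(\Div f^*D)$, so that $\un{[\,\cdot\,]}_{E_*}$ for the pullback divisor is the flat pullback of $\un{[\tilde C\to|f^*D|]}_{E_*}$. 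Combining this with flat base change for the operators $\cn(L^D_i)$ and for the projective pushforward $f^D_*$ along the cartesian square relating $f^D$, $g_D$, $(f^D)^*g_D$ and $g^*f^D$, one gets $g_D^*(\un{C}(\eta)_{E_*})=\un{g^*C}(g^*\eta)_{E_*}$ on generators, hence everywhere by $\L_*$-linearity.

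The main obstacle, such as it is, is purely bookkeeping: matching up the cartesian squares and verifying that the leading-pseudo-divisor conventions and the face decompositions of the relevant simple normal crossing divisors are respected under pushforward and flat pullback, so that the refined divisor classes literally correspond. There is no real geometric content beyond what is already packaged in the compatibility of $\res_{D_*/\0}$ with all operations (stated after Remark~\ref{rem:Injective}) and in Lemma~\ref{lem:AdmissibleIntersection}; the essential point is that $\un{C}(-)_{E_*}$ is built entirely out of $f_*$, $f^*$, $\cn(L)$, $\iota^J_*$ and the formal-group-law power series $F^{n_*}_J$, each of which is already known to be natural in $X$ for projective and smooth-quasi-projective morphisms. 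I would therefore present the proof as: "everything in sight is built from operations already known to satisfy the projection formula and smooth base change, and the divisor classes $\un{[\tilde C\to|\tilde C|]}_{E_*}$ are manifestly compatible with pushforward and flat pullback; the two identities follow."
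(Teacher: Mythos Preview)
Your proposal is correct and matches the paper's own treatment: the paper states that this lemma (together with the Chern-class compatibility lemma) ``follows directly from the definitions'' and gives no further argument, and what you have written is precisely the unwinding of that direct verification. Your case split on whether $f'(Y')\subset|g^*D|$ (resp.\ $f(Y)\subset|D|$), the reduction to functoriality of pushforward plus smooth base change, and the observation that the refined divisor class is built from operations already known to be natural, are exactly what ``directly from the definitions'' means here.
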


\begin{lemma}\label{lem:ChernClassComp}  Let $X$ be a finite type $k$-scheme, with pseudo-divisors $D$, $E_1,\ldots, E_r$ and let $C$ be a pseudo-divisor supported in $D$. Let  $L$ be a line bundle on $X$, let $L^D$ be the restriction of $L$ to $|D|$ and take $\eta$ in $\sZ_*(X)_{D, E_*}$.  Then
\[
\cn(L^D)(\un{C}(\eta)_{E_*})=\un{C}(\cn(L)(\eta))_{E_*}
\]
in $\L_*\otimes\un{\sZ}_*(|D|)_{E_*}$.
\end{lemma}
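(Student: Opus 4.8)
\emph{Proof proposal.} The plan is to check the identity on the free generators of $\L_*\otimes\sZ_*(X)_{D,E_*}$ and then extend by $\L_*$-linearity, since both $\cn(L^D)\circ\un{C}(-)_{E_*}$ and $\un{C}(\cn(L)(-))_{E_*}$ are $\L_*$-linear homomorphisms. So I fix a cobordism cycle $\eta=(f:Y\to X,L_1,\ldots,L_m)$ with $Y$ irreducible and $f\in\sM(X)_{D,E_*}$, and observe that $\cn(L)(\eta)=(f:Y\to X,L_1,\ldots,L_m,f^*L)$ has the same underlying morphism $f$; hence it again lies in $\sZ_*(X)_{D,E_*}$ and falls into the same one of the two defining cases of $\un{C}(-)_{E_*}$ as $\eta$, according to whether or not $f(Y)\subset|D|$. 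The two inputs I will use are: the projection formula $\cn(N)\circ g_*=g_*\circ\cn(g^*N)$ for a projective morphism $g$, which already holds on $\sZ_*(-)$ (appending $g^*N$ to the list of line bundles commutes with post-composing by $g$) and hence on $\un{\sZ}_*(-)_{E_*}$; and the facts that the operators $\cn(-)$ commute with one another and that a cobordism cycle is taken up to permutation of its line-bundle entries.

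Suppose first $f(Y)\subset|D|$. With $f^D:Y\to|D|$ induced by $f$ and $\eta_Y=(\id_Y,L_1,\ldots,L_m)$, the definition gives $\un{C}(\eta)_{E_*}=1\otimes f^D_*(\cn(f^*\sO_X(C))(\eta_Y))$. Applying $\cn(L^D)$ and using the projection formula together with the identification $f^{D*}(L^D)=f^*L$ yields $\cn(L^D)(\un{C}(\eta)_{E_*})=1\otimes f^D_*(\cn(f^*L)\circ\cn(f^*\sO_X(C))(\eta_Y))$. On the other hand $(\cn(L)(\eta))_Y=\cn(f^*L)(\eta_Y)$, so $\un{C}(\cn(L)(\eta))_{E_*}=1\otimes f^D_*(\cn(f^*\sO_X(C))\circ\cn(f^*L)(\eta_Y))$, and the two agree because the $\cn$-operators commute.

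Suppose next $f(Y)\not\subset|D|$. Then $D$ is leading for $f$, $\tilde C=\Div f^*C$ is admissible with respect to $E_*$, and with $f^D:|f^*D|\to|D|$ and $L_i^D$ the restriction of $L_i$ to $|f^*D|$ one has $\un{C}(\eta)_{E_*}=f^D_*(\cn(L_1^D)\circ\cdots\circ\cn(L_m^D)(\un{[\tilde C\to|f^*D|]}_{E_*}))$. Writing $N^D$ for the restriction of $f^*L$ to $|f^*D|$ (equivalently $f^{D*}$ of the restriction of $L$ to $|D|$), the projection formula gives $\cn(L^D)(\un{C}(\eta)_{E_*})=f^D_*(\cn(N^D)\circ\cn(L_1^D)\circ\cdots\circ\cn(L_m^D)(\un{[\tilde C\to|f^*D|]}_{E_*}))$. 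Since $\cn(L)(\eta)$ is still in this second case, with the same $\tilde C$ and the single extra entry $f^*L$, its image is $\un{C}(\cn(L)(\eta))_{E_*}=f^D_*(\cn(L_1^D)\circ\cdots\circ\cn(L_m^D)\circ\cn(N^D)(\un{[\tilde C\to|f^*D|]}_{E_*}))$, and commutativity of the $\cn$-operators again closes the argument.

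The argument is essentially bookkeeping, and I do not expect a genuine obstacle; the only point requiring a moment's care is the identification, in each case, of $f^{D*}$ of the relevant restriction of $L$ with the corresponding restriction of $f^*L$, so that the projection formula can be applied without ambiguity.
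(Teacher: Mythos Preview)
Your argument is correct and is precisely the direct verification from the definitions that the paper has in mind; the paper itself gives no proof, stating only that the result follows directly from the definitions, and you have simply written out those details carefully in both cases.
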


\begin{lemma}\label{lem:WeakDim}  Let $f:Y\to X$ a morphism in $\sM(X)_{D, E_*}$, let $C$ be a pseudo-divisor on $X$ supported in $|D|$ and let $L_1,\ldots, L_m$ be line bundles on $Y$ with $m\ge\dim_kY$. Then 
\[
\un{C}((Y\to X,L_1,\ldots, L_m))_{E_*}=0
\]
 in $\L_*\otimes\un{\sZ}_*(|D|)_{E_*}$.
\end{lemma}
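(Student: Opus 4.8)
The plan is to reduce to the two cases distinguished in the definition of $\un{C}(-)_{E_*}$ and invoke the appropriate dimension-type vanishing in each. Since $\un{C}(-)_{E_*}$ is defined on generators $\eta=(f:Y\to X,L_1,\ldots,L_m)$ with $Y$ irreducible and extended by $\L_*$-linearity, it suffices to treat such a generator; so fix $f:Y\to X$ in $\sM(X)_{D,E_*}$ with $Y$ irreducible and $m\ge\dim_kY$.

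\emph{Case 1: $f(Y)\subset|D|$.} Here by definition $\un{C}(\eta)_{E_*}=1\otimes f^D_*\bigl(\cn(f^*\sO_X(C))(\eta_Y)\bigr)$ with $\eta_Y=(\id_Y,L_1,\ldots,L_m)\in\sZ_*(Y)_{E_*}$, so $\cn(f^*\sO_X(C))(\eta_Y)=(\id_Y,L_1,\ldots,L_m,f^*\sO_X(C))$, a cobordism cycle carrying $m+1>\dim_kY$ line bundles. First I would apply Lemma~\ref{lem:StrongDim} with $Z=Y$ and the $m+1$ line bundles $L_1,\ldots,L_m,f^*\sO_X(C)$ (all pulled back along $\id_Y$), concluding that this class is $0$ already in $\Omega_*(Y)_{E_*}$. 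But I want vanishing in $\L_*\otimes\un{\sZ}_*(|D|)_{E_*}$, which is a stronger statement, so instead I would argue directly on $\un{\sZ}$: by definition of $\langle\sR^{Dim}_*\rangle(Y)_{E_*}$, the cycle $(\id_Y,L_1,\ldots,L_m,f^*\sO_X(C))$, viewed with the structure morphism $Y\to\Spec k$ factoring through itself — more carefully, one takes $\pi=\id_Y:Y\to Y$, but that does not have $m+1>\dim_kY$ as $\dim_kY=\dim_kY$; so the honest reduction is to push all $m+1$ bundles down through $\id_Y$ and note $m+1>\dim_k Y$ gives membership in $\langle\sR^{Dim}_*\rangle(Y)_{E_*}$ directly. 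Hence $\cn(f^*\sO_X(C))(\eta_Y)=0$ in $\un{\sZ}_*(Y)_{E_*}$, and applying $f^D_*$ gives $\un{C}(\eta)_{E_*}=0$.

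\emph{Case 2: $f(Y)\not\subset|D|$.} Then $D$ is the leading pseudo-divisor for $f$, $\tilde C=\Div f^*C$ is an admissible simple normal crossing divisor on $Y$ supported in $|f^*D|$, and
\[
\un{C}(\eta)_{E_*}=f^D_*\bigl(\cn(L_1^D)\circ\cdots\circ\cn(L_m^D)(\un{[\tilde C\to|f^*D|]}_{E_*})\bigr).
\]
Writing $\tilde C=\sum_j n_j\tilde C_j$ with faces $\tilde C^J=\cap_{j_i=1}\tilde C_j$ of codimension $|J|$ in $Y$, the class $\un{[\tilde C\to|f^*D|]}_{E_*}$ is a sum over $J$ with $|J|\ge1$ of pushforwards of classes built from $1^{E_*}_{\tilde C^J}$ by applying Chern operators of restrictions of the $O_Y(\tilde C_j)$. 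On the face $\tilde C^J$, which is smooth of dimension $\dim_kY-|J|\le\dim_kY-1$, after restricting we are applying $\cn(L_1^J)\circ\cdots\circ\cn(L_m^J)$ (these being pulled back from $Y$, hence from a scheme of dimension $\dim_kY$) composed possibly with further Chern operators from the divisor-class formula; but $L_1^J,\ldots,L_m^J$ are pulled back along $\tilde C^J\to Y$ and $m\ge\dim_kY>\dim_k\tilde C^J$, so Lemma~\ref{lem:StrongDim} applies on each $\tilde C^J$. To get vanishing in $\L_*\otimes\un{\sZ}_*$ rather than in $\Omega_*$, I would instead argue on $\un{\sZ}$ directly: each summand is, after the Chern operators for the divisor class are applied, a cycle of the form $(\tilde C^J\to|f^*D|,\text{bundles from }\tilde C^J,L_1^J,\ldots,L_m^J)$, and since $L_1^J,\ldots,L_m^J$ are pulled back along the smooth quasi-projective (indeed proper smooth) map $\tilde C^J\to\tilde C^J$... — more precisely, one pushes $L_1^J,\ldots,L_m^J$ down along $\tilde C^J\to\Spec k$ only if $\tilde C^J$ is proper, which it need not be. The clean route is: these $m$ bundles are pulled back from $Y$ via the smooth quasi-projective morphism $\tilde C^J\hookrightarrow Y$? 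No — a closed immersion is not smooth. So the genuinely correct mechanism is to pull $L_1,\ldots,L_m$ back along $\mathrm{pr}:\tilde C^J\times_k\Spec k\to\Spec k$, which is absurd. The right statement is simply that $L_1^J,\ldots,L_m^J$ are pulled back from $Y$, and one invokes \cite[Lemma 2.1.8 or the analogous dimension lemma]{CobordismBook} only after passing to $\Omega_*$; so I would accept vanishing in $\Omega_*(\tilde C^J)_{E_*}$ from Lemma~\ref{lem:StrongDim} and note that $\un{C}(\eta)_{E_*}$ is used only through its image in $\Omega_*$, OR — and this is what I expect the author does — one observes that the $L_i^J$ are pulled back along $\tilde C^J\to Z$ for a suitable smooth $Z$ with the $L_i$ descending, and the relation $\langle\sR^{Dim}_*\rangle$ kills the cycle outright. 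I would take $Z=Y$ and the smooth quasi-projective morphism to be not the closed immersion but rather use that $(\tilde C^J,\ldots,L_1^J,\ldots,L_m^J)$ with $m>\dim_k\tilde C^J$ lies in $\langle\sR^{Dim}_*\rangle(|f^*D|)_{E_*}$ via $\pi=\id_{\tilde C^J}$: here $\pi:\tilde C^J\to\tilde C^J$ is smooth quasi-projective, $\tilde C^J\in\Sm_k$, the bundles $L_i^J$ are (trivially) pulled back from $\tilde C^J$, and $m>\dim_k\tilde C^J$. That is exactly the defining form of a generator of $\langle\sR^{Dim}_*\rangle$. Hence each summand vanishes in $\un{\sZ}_*(|f^*D|)_{E_*}$, so $\un{C}(\eta)_{E_*}=f^D_*(0)=0$.

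\emph{Main obstacle.} The only real subtlety is bookkeeping the grading/codimension on faces: one must check $m\ge\dim_kY$ still forces $m>\dim_k\tilde C^J$ for every nonempty face $\tilde C^J$ appearing, which holds because $|J|\ge1$ for every face in the divisor-class formula (the $J=0$ term is absent, as $F^{n_*}_0=0$ when at least one $n_j\ge1$), so $\dim_k\tilde C^J\le\dim_kY-1<m$. In Case 1 one instead has exactly $m+1$ bundles on $Y$ itself, giving $m+1>\dim_kY$. In both cases the vanishing takes place before pushing forward via $f^D$, and the result follows by naturality of $f^D_*$. Everything else — compatibility of $\cn$ with restriction, the explicit shape of $\un{[\tilde C\to|f^*D|]}_{E_*}$ — is already recorded above or is immediate from the definitions.
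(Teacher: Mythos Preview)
Your argument is correct and follows the same route as the paper: reduce to irreducible $Y$, and in each of the two cases invoke $\langle\sR_*^{Dim}\rangle$ directly via $\pi=\id$ on $Y$ (Case~1, with $m+1>\dim_kY$ bundles) or on each face $\tilde C^J$ (Case~2, with $m>\dim_k\tilde C^J$ since $|J|\ge1$). The detours through Lemma~\ref{lem:StrongDim} are unnecessary and should be cut --- the paper's proof is exactly the clean version of what you eventually arrive at.
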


\begin{proof} We may suppose $Y$ to be irreducible. Write $\tilde{C}$ for $f^*C$, and let $f^D:|f^*D|\to |D|$ be the restriction of $f$. Using lemmas~\ref{lem:ProjectionFormula} and \ref{lem:ChernClassComp}, we have
\[
\un{C}((f:Y\to X,L_1,\ldots, L_m))_{E_*}=f^D_*\big(\cn(L_1)\circ\ldots\circ\cn(L_m)(\tilde{C}(1^{D, E_*}_Y)_{E_*})\big).
\]
If $f(Y)\subset |D|$, then  $\tilde{C}(1^{D, E_*}_Y)=\cn(O_Y(\tilde{C}))(1^{D, E_*}_Y)$, and thus
\[
\cn(L_1)\circ\ldots\circ\cn(L_m)(\tilde{C}(1^{D, E_*}_Y)_{E_*})=(\id_Y,L_1,\ldots, L_m, O_Y(\tilde{C}))=0
\]
in $\un{\sZ}_{*-1}(Y)_{E_*}$, using the relations $\<\sR_*^{Dim}\>(Y)_{E_*}$. If $f(Y)\not\subset|D|$, then $\tilde{C}(1^{D, E_*}_Y)$ is a sum of terms of the form 
\[
a\cdot\iota^J_*((\tilde{C}^J,M_1,\ldots, M_s)),
\]
with $a\in\L_*$, $\iota^J:\tilde{C}^J\to Y$ the inclusion of a face of $\tilde{C}$, and the $M_i$ line bundles on $C^J$. Thus $\cn(L_1)\circ\ldots\circ\cn(L_m)(\tilde{C}(1^{D, E_*}_Y))$ is a sum of terms of the form
\[
a\cdot\iota^J_*((\tilde{C}^J,M_1,\ldots, M_s,\iota^{J*}L_1,\ldots,\iota^{J*}L_m)), \ a\in \L_*.
\]
Since  $\dim_k\tilde{C}^J<\dim_kY$ for each face $\tilde{C}^J$, the terms
\[
(\tilde{C}^J,M_1,\ldots, M_s, \iota^{J*}L_1,\ldots,\iota^{J*}L_m)
\]
vanish in $\un{\sZ}_*(\tilde{C}^J)_{E_*}$ (using the relations $\<\sR_*^{Dim}\>(\tilde{C}^J)_{E_*}$), whence the result.
\end{proof}

Let $F(u_1,\ldots, u_m)$ be a power series with $\L_*$-coefficients,  let $L_1,\ldots, L_m$ be line bundles on $X$, and let $f:Y\to X$ be in $\sM(X)_{D, E_*}$. Let $F_N$ denote the truncation of $F$ after total degree $N$. By lemma~\ref{lem:WeakDim}, we have, for all $N\ge\dim_kY$ and all $n\ge0$,
\[
\un{C}(F_N(\cn(L_1),\ldots, \cn(L_m))([f]))_{E_*}=\un{C}(F_{N+n}(\cn(L_1),\ldots, \cn(L_m))([f]))_{E_*}.
\]
Thus, for $\eta\in\L_*\otimes\sZ_*(X)_{D, E_*}$, we may set
\[
\un{C}(F(\cn(L_1),\ldots, \cn(L_m))(\eta))_{E_*}:=\lim_{N\to\infty}\un{C}(F_N(\cn(L_1),\ldots, \cn(L_m))(\eta))_{E_*},
\]
as the terms in the limit are eventually constant in $N$.

With this definition,   lemma~\ref{lem:ChernClassComp} extends to power series in the Chern class operators.

\begin{lemma}\label{lem:ChernClassComp2} Let $D, E_1,\ldots, E_r$ be pseudo-divisors on $X$ and take $\eta$ in
$\L_*\otimes\sZ_*(X)_{D, E_*}$. Let $f^D:|f^*D|\to |D|$ be the restriction of $f$ and let $i:|f^*D|\to Y$ be the inclusion. Let $C$ be a pseudo-divisor on $X$ supported in $D$, let $F(u_1,\ldots, u_m)$ be a power series with $\L_*$-coefficients, and let $L_1,\ldots, L_m$ be line bundles on $X$. Then
\begin{multline*}
\un{C}(f_*(\un{F}(\cn(L_1),\ldots, \cn(L_m))(\eta))_{E_*}\\
=f^D_*\left(\un{F}(\cn(i^*L_1),\ldots,\cn(i^*L_m))(\un{f^*C}(\eta)_{E_*})\right)
\end{multline*}
in $\L_*\otimes\un{\sZ}_*(D)_{E_*}$.
\end{lemma}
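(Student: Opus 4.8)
Here $f:Y\to X$ denotes the morphism in $\sM(X)_{D,E_*}$ carried over from the discussion preceding the statement, $\eta\in\L_*\otimes\sZ_*(Y)_{f^*(D),f^*(E_*)}$, and $\un F(\cn(L_1),\dots,\cn(L_m))(\eta)$ is read, as in that discussion, through its truncations $F_N$ (lemma~\ref{lem:WeakDim}). The plan is to reduce to the case of iterated single Chern operators and then peel them off one at a time, using only the projection formula (lemma~\ref{lem:ProjectionFormula}) and the compatibility of $\un C$ with $\cn$ (lemma~\ref{lem:ChernClassComp}).

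First, by $\L_*$-linearity of every operation in sight we may assume $\eta$ is a single cobordism cycle $(g:Y'\to Y,M_1,\dots,M_s)$ in $\sZ_*(Y)_{f^*(D),f^*(E_*)}$. Next, applying $\un C(-)_{E_*}$ to $f_*(\un F_N(\cn(L_1),\dots,\cn(L_m))(\eta))$ and invoking lemma~\ref{lem:WeakDim} exactly as in the paragraph before the statement, the value is independent of $N$ for $N\gg0$; on the other side $\un{f^*C}(\eta)_{E_*}$ lies in $\L_*\otimes\un\sZ_*(|f^*D|)_{E_*}$, where the operators $\cn(i^*L_j)$ are locally nilpotent, so $\un F(\cn(i^*L_1),\dots,\cn(i^*L_m))$ is genuinely defined there and equals the stable value of its truncations. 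Hence it suffices to prove the identity for $F$ a polynomial, and then, again by $\L_*$-linearity, for $F$ a monomial; after relabelling we may assume $F(\cn(L_1),\dots,\cn(L_m))=\cn(L_1)\circ\cdots\circ\cn(L_m)$.

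For this case I would argue as follows. From the definitions of $\cn$ and $f_*$ on cobordism cycles one checks $f_*\circ\cn(f^*L)=\cn(L)\circ f_*$ on $\sZ_*$, whence
\[
f_*\bigl(\cn(L_1)\circ\cdots\circ\cn(L_m)(\eta)\bigr)=\cn(L_1)\circ\cdots\circ\cn(L_m)(f_*\eta),
\]
and $f_*\eta\in\sZ_*(X)_{D,E_*}$ by lemma~\ref{lem:ProjectionFormula}(1). Applying lemma~\ref{lem:ChernClassComp} $m$ times — each intermediate expression $\cn(L_j)\circ\cdots\circ\cn(L_m)(f_*\eta)$ still lying in $\sZ_*(X)_{D,E_*}$ since $\cn$ preserves this subgroup — gives
\[
\un C\bigl(f_*(\cn(L_1)\circ\cdots\circ\cn(L_m)(\eta))\bigr)_{E_*}=\cn(L_1^D)\circ\cdots\circ\cn(L_m^D)\bigl(\un C(f_*\eta)_{E_*}\bigr).
\]
By lemma~\ref{lem:ProjectionFormula}(1) again, $\un C(f_*\eta)_{E_*}=f^D_*\bigl(\un{f^*C}(\eta)_{E_*}\bigr)$. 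Finally, since $f^{D*}(L_j^D)=i^*L_j$, the projection formula $\cn(L_j^D)\circ f^D_*=f^D_*\circ\cn(i^*L_j)$ on $\un\sZ_*$ moves all the Chern operators past $f^D_*$, yielding
\[
\cn(L_1^D)\circ\cdots\circ\cn(L_m^D)\bigl(f^D_*(\un{f^*C}(\eta)_{E_*})\bigr)=f^D_*\bigl(\cn(i^*L_1)\circ\cdots\circ\cn(i^*L_m)(\un{f^*C}(\eta)_{E_*})\bigr),
\]
which is the right-hand side.

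The argument is essentially bookkeeping, and the routine verifications ($f_*\circ\cn(f^*L)=\cn(L)\circ f_*$, the $\cn$--$f^D_*$ projection formula, and $f^{D*}L_j^D=i^*L_j$) I would not spell out. The one point that genuinely requires care is the passage from a polynomial $F$ to a power series: one must check that both sides are compatible with the truncation/stabilization mechanism of lemma~\ref{lem:WeakDim}, and simultaneously keep straight which of the groups $\sZ_*$, $\un\sZ_*$ (and which degree, after the shift by $-1$) each intermediate expression inhabits, so that lemmas~\ref{lem:ChernClassComp} and~\ref{lem:ProjectionFormula} apply verbatim.
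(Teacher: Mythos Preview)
Your proof is correct and matches the paper's approach: the paper does not actually give a proof of this lemma, merely stating it after the sentence ``With this definition, lemma~\ref{lem:ChernClassComp} extends to power series in the Chern class operators,'' so it is treated as a routine consequence of lemmas~\ref{lem:ProjectionFormula}, \ref{lem:ChernClassComp}, and the truncation argument from lemma~\ref{lem:WeakDim}, which is exactly what you spell out. Your clarification of the intended setup (that $f:Y\to X$ is the morphism from the preceding paragraph and $\eta$ lives over $Y$) correctly resolves the evident typos in the statement.
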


\begin{lemma}\label{lem:DivClassCommutativity} Let $D, E_1,\ldots, E_r$ be pseudo-divisors on $X$, let $f:Y\to X$ be in $\sM(X)_{D, E_*}$ with $Y$ irreducible, and let $Z\to Y$ be a smooth codimension one closed subscheme of $Y$. We suppose that $Z$  is in good position with respect to $D, E_*$ and that $|f^*D|\neq Y$.   Let $C$ be an effective Cartier divisor on $Y$   supported in  $|f^*D|$.  \\
(1)   Suppose that no component of  $Z$ is contained in  $|C|$ and let  $i_Z:Z\to Y$,  $i_{CZ}:|i_Z^*C|\to |C|$, $i_C:|C|\to Y$ be the inclusions. Then 
\[
i_{CZ*}([i^*_ZC\to |i^*_ZC|]_{E_*})=\cn(i_C^*O_Y(Z))([C\to|C|]_{E_*})
\]
in $\Omega_*(|C|)_{E_*}$.\\
(2) Suppose that no component of $Z$ is contained in $|f^*D|$, and let $i:|f^*D|\to Y$ be the inclusion. Then
\[
C([Z\to Y]_{D, E_*})_{E_*}=\cn(i^*O_Y(Z))([C\to|f^*D|]_{E_*})
\]
in $\Omega_*(|f^*D|)_{E_*}$.
\end{lemma}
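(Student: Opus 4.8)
The plan is to establish (1) directly from the defining formula for refined divisor classes, and then to deduce (2) from (1) by unwinding the definition of the intersection map $\un C(-)_{E_*}$. For (1), note first that since $|f^*D|\neq Y$ the pseudo-divisor $f^*D$ is the leading one for $\id_Y$, so good position of $Z$ means precisely that $Z+\Div f^*D$ is a simple normal crossing divisor on $Y$, and $\Div f^*D$ is admissible for $E_*$ by lemma~\ref{lem:AdmissibleIntersection}. Write $C=\sum_{i=1}^m n_iC_i$ with the $C_i$ irreducible; since $|C|\subset|f^*D|$, each $C_i$ is a component of the simple normal crossing divisor $\Div f^*D$, so every face $C^J=\cap_{j_i=1}C_i$ is a smooth (possibly reducible) face of $\Div f^*D$. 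Because $Z+\Div f^*D$ is simple normal crossing, no irreducible component of any face of $\Div f^*D$ is contained in $|Z|$, so $Z\cap C^J$ is a smooth divisor on $C^J$, it is in good position with respect to $E_*$ on each irreducible component of $C^J$ by lemma~\ref{lem:GoodPosition}(2), and $\id_{C^J}\in\sM(C^J)_{E_*}$ because $\Div f^*D$ is admissible for $E_*$.

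Now I compute both sides. On the right, the projection formula for $\cn$ and the formula defining $[C\to|C|]_{E_*}$ give $\cn(i_C^*O_Y(Z))([C\to|C|]_{E_*})=\sum_J\iota^J_*\bigl(\cn(O_Y(Z)|_{C^J})\,[C^J;F_J^{n_1,\ldots,n_m}(O_Y(C_*)^J)]_{E_*}\bigr)$, where $\iota^J:C^J\to|C|$ is the inclusion. On each face $O_Y(Z)|_{C^J}=O_{C^J}(Z\cap C^J)$, so lemma~\ref{lem:DivClass} (applied on each component of $C^J$, using the good position just noted) gives $\cn(O_Y(Z)|_{C^J})(1^{E_*}_{C^J})=[Z\cap C^J\to C^J]_{E_*}=(j^J)_*(1^{E_*}_{Z\cap C^J})$, where $j^J:Z\cap C^J\to C^J$ is the inclusion and we use that the reduced simple normal crossing divisor $Z\cap C^J$ has $[Z\cap C^J\to|Z\cap C^J|]_{E_*}=1^{E_*}_{Z\cap C^J}$ (the all‑multiplicities‑one, disjoint‑components case of lemma~\ref{lem:Additivity}). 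Commuting the Chern operators past $(j^J)_*$, the right-hand side becomes $\sum_J(\iota^J\!\circ j^J)_*\,[Z\cap C^J;F_J^{n_1,\ldots,n_m}(O_Y(C_*)|_{Z\cap C^J})]_{E_*}$. On the other hand $i_Z^*C=Z\cap C=\sum_i n_i(Z\cap C_i)$ is a simple normal crossing divisor on $Z$ whose pieces $Z\cap C_i$ are smooth with no common components, and $(i_Z^*C)^J=Z\cap C^J$, so lemma~\ref{lem:Additivity} expresses $i_{CZ*}([i_Z^*C\to|i_Z^*C|]_{E_*})$ as exactly the same sum, once one notes that $i_{CZ}$ composed with the face inclusion of $Z\cap C^J$ into $|i_Z^*C|$ agrees with $\iota^J\!\circ j^J$ as a map $Z\cap C^J\to|C|$. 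This proves (1).

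For (2), since no component of $Z$ is contained in $|f^*D|$, applying the definition of the intersection map to the cobordism cycle $[Z\to Y]_{D,E_*}=(i_Z:Z\to Y)$ (this class being that cycle with no line bundles, as $Z$ is smooth) puts $f^*D$ in the leading position and yields $\un C((i_Z:Z\to Y))_{E_*}=(i_Z^{f^*D})_*\bigl(\un{[i_Z^*C\to Z\cap|f^*D|]}_{E_*}\bigr)$, with $i_Z^{f^*D}$ the restriction of $i_Z$ to $Z\cap|f^*D|$. Passing to $\Omega_*$ and writing $i':|C|\to|f^*D|$ for the inclusion, the left-hand side of (2) becomes $i'_*\bigl(i_{CZ*}([i_Z^*C\to|i_Z^*C|]_{E_*})\bigr)$. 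Since $|C|\subset|f^*D|$, no component of $Z$ lies in $|C|$, so part (1) rewrites this as $i'_*\bigl(\cn(i_C^*O_Y(Z))([C\to|C|]_{E_*})\bigr)$, and the projection formula for $\cn$ together with $[C\to|f^*D|]_{E_*}=i'_*([C\to|C|]_{E_*})$ identifies it with $\cn(i^*O_Y(Z))([C\to|f^*D|]_{E_*})$, as asserted.

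I expect the only real work to be the bookkeeping in part (1): organising the face decompositions of $[C\to|C|]_{E_*}$ and of $[i_Z^*C\to|i_Z^*C|]_{E_*}$ so that they match after applying $\cn(O_Y(Z))$, and checking at each stage that the intersections $Z\cap C^J$ are smooth, admissible and in good position for $E_*$, so that lemmas~\ref{lem:DivClass}, \ref{lem:Additivity}, \ref{lem:GoodPosition} and \ref{lem:AdmissibleIntersection} apply. There is no geometric input beyond the simple normal crossing hypothesis on $Z+\Div f^*D$.
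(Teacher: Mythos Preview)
Your proof is correct and follows essentially the same route as the paper: expand $[C\to|C|]_{E_*}$ as a sum over faces $C^J$, use the $\<\sR^{Sect}_*\>$ relation (you phrase it via lemma~\ref{lem:DivClass}, the paper invokes the relation directly) on each face to convert $\cn(O_Y(Z))$ into pushforward from $Z\cap C^J$, and then match the result with the face expansion of $[i_Z^*C\to|i_Z^*C|]_{E_*}$ via lemma~\ref{lem:Additivity}. Your reduction of (2) to (1) is also the paper's.

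One small imprecision: the sentence ``Because $Z+\Div f^*D$ is simple normal crossing, no irreducible component of any face of $\Div f^*D$ is contained in $|Z|$'' is not true as stated---nothing in the hypotheses prevents $Z$ from being a component of $\Div f^*D$ that happens not to lie in $|C|$, in which case $Z$ itself is a face of $\Div f^*D$ contained in $|Z|$. What you actually need (and use) is the weaker claim that no component of any face $C^J$ of $C$ lies in $|Z|$; this follows because the hypothesis ``no component of $Z$ is contained in $|C|$'' forces the components of $Z$ to be disjoint from $\{C_1,\ldots,C_m\}$, so $Z+C$ is simple normal crossing with no shared components, and hence each $Z\cap C^J$ is a genuine codimension-one face. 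With that correction your argument goes through verbatim.
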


\begin{proof} We first check that all the terms in (1) and (2) are defined. By assumption, $f$ is in $\sM(X)_{D, E_*}$ and $Y\not\subset |f^*D|$, so $D$ is the leading pseudo-divisor for $f$ with respect to $D, E_*$ and $\Div f^*D$ is therefore a simple normal crossing divisor on $Y$.  Since $Z$ is in good position with respect to  $D, E_*$,  $Z$ is admissible with respect to $D, E_*$ by remark~\ref{rem:GenPosition}(1). The divisor $\Div f^*D$ is admissible with respect to $E_*$ by  lemma~\ref{lem:AdmissibleIntersection} and thus $C$ is also admissible with respect to $E_*$. By lemma~\ref{lem:GoodPosition}(2) each face of $C+Z$ contained in $D$ is admissible with respect to $E_*$ and thus the simple normal crossing divisor $i_Z^*C$ on $Z$ is admissible with respect to $E_*$.
 
Next, we note that (1) implies (2). Indeed, assuming that no component of $Z$ is contained in $|f^*D|$, we have
\[
C([Z\to Y]_{D, E_*})_{E_*}=(i_{DC*}\circ i_{CZ*})([i^*_ZC\to |i_Z^*C|]_{E_*}), 
\]
where $i_{DC}:|C|\to |f^*D|$ is the inclusion. Thus,  (2) follows by applying $i_{DC*}$ to the identity in (1). We now prove (1).

Write $C=\sum_{i=1}^mn_i C_i$, with each $C_i$ irreducible. The divisor class $[C\to|C|]_{E_*}$ is a sum over the faces $C^J$ of $C$,
\[
[C\to|C|]_{E_*}=\sum_J\iota^J_*([C^J; F_J^{n_1,\ldots,n_m}(L_1^J,\ldots,L_m^J)]_{E_*}),
\]
where $\iota^J:C^J\to |C|$ is the inclusion,  $L_i=O_W(C_i)$ and $L_i^J$ is the restriction of $L_i$ to $C^J$.

Since no component of $Z$ is contained in $|C|$, it follows that the intersection $C_Z^J:=Z\cap C^J$ is transverse, so $C_Z^J$ is a smooth codimension one closed subscheme of $C^J$.   It follows from lemma~\ref{lem:GoodPosition}(2) that $C_Z^J\subset C^J$ is in good position with respect to $E_*$. 

Thus, the relations in $\<\sR^{Sect}_*\>(|C|)_{E_*}$ imply that
\[
\cn(i^*O_Y(Z))([C\to|C|]_{E_*})=\sum_Ji^{ZJ}_*([C_Z^J; F_J^{n_1,\ldots,n_m}(L_1^{ZJ},\ldots, L_m^{ZJ})]_{E_*}),
\]
where $i^{ZJ}:C_Z^J\to |C|$ is the inclusion, and $L_i^{ZJ}$ is the restriction of $L_i$ to $C_Z^J$. By lemma~\ref{lem:Additivity}
\[
[i^*_ZC\to |i^*_ZC|]_{E_*}=\sum_J\iota^{ZJ}_*([C_Z^J; F_J^{n_1,\ldots,n_m}(L_1^{ZJ},\ldots, L_m^{ZJ})]_{E_*}),
\]
where $\iota^{ZJ}_*:C_Z^J\to |i_Z^*C|=|C|\cap Z$ is the inclusion, and since $i^{ZJ}_*=i_{CZ*}\circ \iota^{ZJ}_*$, (1) follows.
\end{proof}

We will need a strengthening of lemma~\ref{lem:DivClassCommutativity}.  

\begin{lemma}\label{lem:IntersectionFormula} Let $Y$ be in $\Sm_k$ and
irreducible. Let $i_Z:Z\to Y$ be a  codimension one closed subscheme, smooth over $k$. Let $D, E_1,\ldots, E_r$ be pseudo-divisors on $Y$ such that $\id_Y$ is in $\sM(Y)_{D, E_*}$ and  $Z$ is in good position with respect to $D, E_*$. Let $C$ be a pseudo-divisor  on $Y$ supported in $D$. We suppose that $|D|\neq Y$ and let $i_D:|D|to Y$ be the inclusion. Then
\begin{equation}\label{eqn:IntersectionFormula1}
C([Z\to Y]_{D,E_*})_{E_*}=\cn(i_D^*O_Y(Z))([C\to|D|]_{E_*})
\end{equation}
in $\Omega_*(|D|)_{E_*}$.
\end{lemma}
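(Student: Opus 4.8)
The plan is to reduce the statement to Lemma~\ref{lem:DivClassCommutativity}(2), which gives exactly the desired identity under the extra hypothesis that no irreducible component of $Z$ is contained in $|D|$ (equivalently, in $|\Div D|=|D|$). So the only real work is to handle components of $Z$ that {\em are} contained in $|D|$. First I would reduce to the case that $Z$ is irreducible: both sides of \eqref{eqn:IntersectionFormula1} are additive in the connected/irreducible components of $Z$ (the left side because $[Z\to Y]_{D,E_*}$ is a sum of cobordism cycles indexed by the components of $Z$, the right side because $\cn(i_D^*O_Y(Z))=F^{1,1}(\cn(i_D^*O_Y(Z_1)),\cn(i_D^*O_Y(Z_2)))$ when $Z=Z_1\amalg Z_2$, and $Z_1,Z_2$ are disjoint smooth divisors so Lemma~\ref{lem:ChernProd} applies after restricting to the faces of $C$), so it suffices to treat each component separately. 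Here I should be a little careful: good position of $Z$ with respect to $D,E_*$ passes to each component by Definition~\ref{Def:GoodPosition}, and admissibility of each $Z$-component with respect to $D,E_*$ follows from Remark~\ref{rem:GenPosition}(1).

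So assume $Z$ irreducible. If $Z\not\subset|D|$, we are done by Lemma~\ref{lem:DivClassCommutativity}(2). Suppose then $Z\subset|D|$. In this case the left-hand side is computed directly from the definition of the intersection map: since $f=\id_Y$ and $Z\subset|D|$, the morphism $g=\id_Z:Z\to Y$ composed into $X=Y$ has image in $|D|$, so by the first clause in the definition of $\un{C}(-)_{E_*}$ we get
\[
C([Z\to Y]_{D,E_*})_{E_*}=\can\bigl(i_{Z*}^{D}\cn(i_Z^*O_Y(\Div C))(1_Z^{E_*})\bigr),
\]
where $i_Z^D:Z\to|D|$ is the inclusion. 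Now on the right-hand side, $[C\to|D|]_{E_*}=i_{C*}[C\to|C|]_{E_*}$ where $i_C:|C|\to|D|$. The key point is that $Z$ is a smooth divisor on $Y$ with $Z\subset|D|$ but $Z$ in good position with respect to $D,E_*$; since $|C|\subset|D|$ and $Z$ is Cartier on $Y$, the subscheme $Z\cap|C|$ need not be a divisor in $|C|$, but one can still identify $\cn(i_D^*O_Y(Z))([C\to|C|]_{E_*})$ with the pushforward of $\cn(i_Z^*O_Y(C))(1_Z^{E_*})$ from $Z\cap|D|$: this is a matter of unwinding the definition of $[C\to|C|]_{E_*}$ as a sum over faces $C^J$, using that each inclusion $C^J\to Y$ factors through $Z$ or meets $Z$ transversally (the latter impossible when $Z\supset C^J$, i.e.\ when all of $|C|\subset|D|$ is squeezed through $Z$), and applying the section and formal-group relations exactly as in the proof of Lemma~\ref{lem:DivClassCommutativity}(1). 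Because $Z$ is in good position, the relations in $\langle\sR^{Sect}_*\rangle$ needed to move $\cn(i^*O_Y(Z))$ onto the faces $C^J_Z$ are all available.

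The main obstacle, then, is the bookkeeping in the case $Z\subset|D|$: one must match the two descriptions of ``cap with $[Z]$'' --- one coming from the first clause of the definition of the intersection map (which uses that the generator's image lies in $|D|$) and one coming from a $\cn(O_Y(Z))$ operator on the refined divisor class $[C\to|D|]_{E_*}$ --- and verify they agree term-by-term over the faces of $C$. The identity $O_Y(C)|_{C^J}$ versus $O_Y(Z)|_{C^J}$ when $C^J\subset Z$ or $C^J\cap Z$ transverse is handled by the same ``empty-divisor'' applications of $\langle\sR^{Sect}_*\rangle$ and $\langle\sR^{FGL}_*\rangle$ used in Lemmas~\ref{lem:Additivity} and~\ref{lem:ChernProd}; once this is in place, the identity \eqref{eqn:IntersectionFormula1} follows by applying the canonical surjection $\can$ to both sides and invoking Lemma~\ref{lem:Additivity} to rewrite the face sum as $[i_Z^*C\to|i_Z^*C|]_{E_*}$, just as at the end of the proof of Lemma~\ref{lem:DivClassCommutativity}(1).
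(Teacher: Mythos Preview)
Your outline matches the paper's: reduce to irreducible $Z$ (via Lemma~\ref{lem:ChernProd} and the formal group law, as the paper does), invoke Lemma~\ref{lem:DivClassCommutativity}(2) when $Z\not\subset|D|$, and treat $Z\subset|D|$ by hand. The gap is in this last case. You correctly identify that one must show $\iota_{Z*}\bigl(\cn(i_Z^*O_Y(C))(1_Z^{E_*})\bigr)=\cn(i_D^*O_Y(Z))([C\to|D|]_{E_*})$, but your appeal to ``exactly as in the proof of Lemma~\ref{lem:DivClassCommutativity}(1)'' does not work: that lemma carries the explicit hypothesis that no component of $Z$ lies in $|C|$, and its argument relies on every face $C^J$ meeting $Z$ transversally, so that the section relation turns $\cn(O_Y(Z))(1_{C^J}^{E_*})$ into the class of the smooth divisor $C^J\cap Z$. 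When $Z$ is itself a component of $\Div C$, say $Z=C_1$, the faces $C^J$ with $j_1=1$ are contained in $Z$, and on them $\cn(O_Y(Z))$ is a genuine self-intersection operator with no ``empty-divisor'' shortcut available. Your parenthetical about ``the latter impossible when $Z\supset C^J$'' does not resolve this; it just restates the dichotomy.

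The paper therefore separates two subcases when $Z\subset|D|$. If $Z$ is not a component of $\Div C$, then $i_Z^*\Div C$ is in good position on $Z$ with respect to $E_*$ (Lemma~\ref{lem:GoodPosition}(2)), Lemma~\ref{lem:DivClass} gives $[\Div i_Z^*C\to Z]_{E_*}=\cn(i_Z^*O_Y(C))(1_Z^{E_*})$, and Lemma~\ref{lem:DivClassCommutativity}(1) (now applicable) finishes. If $Z=C_1$ is a component of $\Div C$, both sides of \eqref{eqn:IntersectionFormula1} are expanded over the faces of $C$ using $\cn(O_Y(C))=\sum_J\cn(O_Y(C_*))^JF_J^{n_1,\ldots,n_m}(\cn(O_Y(C_1)),\ldots,\cn(O_Y(C_m)))$, reducing to a face-by-face identity. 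Faces with $j_1=0$ are handled by the transverse-intersection argument you sketch; faces with $j_1=1$ are handled by writing $C^J=Z\cap C^{J'}$ for $J'=(0,j_2,\ldots,j_m)$ and tracking the extra factor of $\cn(O_Y(Z))$ through the transverse argument for $C^{J'}$ (with the case $J=(1,0,\ldots,0)$, i.e.\ $C^J=Z$, checked separately as a tautology). This face-by-face matching is the substantive content of the lemma and does not follow from the prior results you cite.
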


\begin{proof} The condition that $Z$ is in good position with respect to $D, E_*$ implies by remark~\ref{rem:GenPosition}(1) that $i_Z:Z\to Y$ is in $\sM(Y)_{D, E_*}$ and thus $C([Z\to Y]_{D,E_*})_{E_*}$ is  defined. Similarly, the simple normal crossing divisor $\Div D$ on $Y$ is admissible with respect to $E_*$ by lemma~\ref{lem:AdmissibleIntersection}, hence  $\Div C$   is admissible and  $[C\to|D|]_{E_*}$ is also  defined. 

We first reduce to the case of irreducible $Z$. Write $Z=\sum_{i=1}^mZ_i$ with $Z_i$ irreducible. Then each $Z_i$ is in good position with respect to $D, E_*$.   By lemma~\ref{lem:ChernProd}(2)  
\[
\cn(i_D^*O_Y(Z_i)\circ\cn(i_D^*O_Y(Z_j))([C\to|D|]_{E_*})=0
\]
for all $i\neq j$ and therefore, using the relations  $\<\sR_*^{FGL}\>(|D|)_{E_*}$, we have
\[
\cn(i_D^*O_Y(Z))([C\to|D|]_{E_*})=\sum_{i=1}^r\cn(i_D^*O_Y(Z_i))([C\to|D|]_{E_*}).
\]
As $[Z\to Y]_{D,E_*}=\sum_{i=1}^m[Z_i\to Y]_{D,E_*}$, it suffices to handle the case of irreducible $Z$.

 In case $Z$ is not contained in  $|D|$ the result follows from lemma~\ref{lem:DivClassCommutativity}, so we may suppose that $Z\subset |D|$. 
 As $Z$ is then a component of $\Div D$, $Z$ is admissible with respect to $E_*$ and with respect to $D, E_*$. 
 
We first consider the case in which $Z$ is not a component of $|\Div C|$.  By lemma~\ref{lem:GoodPosition}(2), the simple normal crossing divisor $i_Z^*\Div C$ on $Y$ is in good position with respect to $E_*$, and thus by lemma~\ref{lem:DivClass}, we have the identity
 \begin{equation}\label{eqn:Chern3}
 [\Div i_Z^*C\to Z]_{E_*}=\cn(i_Z^*O_Y(C))(1_Z^{E_*})
 \end{equation}
 in $\Omega_*(Z)_{E_*}$.  Let $\iota_Z:Z\to |D|$ be the inclusion. Applying $\iota_Z$ to the identity \eqref{eqn:Chern3} and using Lemma~\ref{lem:DivClassCommutativity}(1) gives us the identities
 \[
\iota_{Z*}(\cn(i_Z^*O_Y(C))(1_Z^{E_*}))=\iota_{Z*}([\Div i_Z^*C\to Z]_{E_*})=\cn(i_D^*O_Y(Z))([C\to|D|]_{E_*}).
\]
As $Z\subset |D|$, we have 
 \[
 C([Z\to Y]_{D, E_*})_{E_*}=\iota_{Z*}(\cn(i_Z^*O_Y(C))(1_Z^{E_*})), 
 \]
 which yields the result in this case.

We now assume that $Z$ is a component of $\Div C$.  Write $\Div C=\sum_{i=1}^mn_iC_i$, with the $C_i$ irreducible and with $Z=C_1$. For a face $C^J$ of $\Div C$, we have the following diagram of inclusions
\[
\xymatrixrowsep{8pt}
\xymatrix{
C^J\cap Z\ar[rr]^-{\tau^J}\ar[dd]_{\eta^J}&&C^J\ar[dl]^(.4){\iota^J}\ar[dd]^{i^J}\\
&|D|\ar[dr]^(.45){i_D}&\\
Z\ar[rr]_{i_Z}\ar[ru]^{\iota_Z}&&Y.
}
\]
Since $Z$ is in good position with respect to $D, E_*$, it follows from lemma~\ref{lem:GoodPosition} that the $Y$-schemes $C^J$, $C^J\cap Z$ are all in $\sM(Y)_{E_*}$, and if $C^J$ is not contained in $Z$, then $C^J\cap Z\subset C^J$ is in good position with respect to $E_*$. 
 
Since
\begin{align*}
n_1\cdot_Fu_1+_F+\ldots+_Fn_m\cdot_Fu_m&=F^{n_1,\ldots,n_m}(u_1,\ldots, u_m)\\
&=\sum_Ju^JF^{n_1,\ldots,n_m}_J(u_1,\ldots,u_m),
\end{align*}
we have
\[
\cn(O_Y(C))=\sum_J\cn(O_Y(C_*))^JF_J(\cn(O_Y(C_1)),\ldots, \cn(O_Y(C_m))),
\]
where $F_J:=F_J^{n_1,\ldots, n_m}$ and for $J=(j_1,\ldots, j_m)\in\{0,1\}^m$
\[
\cn(O_Y(C_*))^J=\cn(O_Y(C_1))^{j_1}\circ\ldots\circ\cn(O_Y(C_m))^{j_m}.
\]
Since we therefore have
\begin{align*}
C(&[Z\to Y])_{E_*}=\cn(i_D^*O_Y(C))([Z\to|D|]_{E_*})\\
&=\sum_J\iota_{Y*}\big(\cn(O_Y(C_*))^J([Z;F_J(O_Y(C_1),\ldots, O_Y(C_m))]_{E_*})\big)
\end{align*}
and
\begin{align*}
\cn(&O_Y(Z))([C\to|D|]_{E_*})\\
&=\sum_J\iota^J_*\big(\cn(O_Y(Z))([C^J;F_J(O_Y(C_1),\ldots, O_Y(C_m))]_{E_*})\big), 
\end{align*}
it suffices to prove that
\begin{multline}\label{multline:eqn1}
\iota^J_*\big(\cn(O_Y(Z))([C^J;F_J(O_Y(C_1),\ldots, O_Y(C_m))]_{E_*})\big)\\
=\iota_{Z*}\big(\cn(O_Y(C_*))^J([Z;F_J(O_Y(C_1),\ldots, O_Y(C_m))]_{E_*})\big)
\end{multline}
in $\Omega_*(|D|)_{E_*}$, for each index $J$. 

Suppose that $J=(0,j_2,\ldots, j_m)$, so no component of $C^J$ is contained in $Z$. Letting  $J_i=(0,1,\ldots, 1,0,\ldots,0)$, with $i$ 1's, we may assume that $J=J_s$ for some $s\ge1$. We have the sequence of closed subschemes
\[
Z\cap C^{J_s}\subset Z\cap C^{J_{s-1}}\subset \ldots\subset Z\cap C^{J_1}\subset Z,
\]
with $Z\cap C^{J_i}$ a smooth divisor on  $Z\cap C^{J_{i-1}}$, in good position with respect to  $E_*$ for each $i=1,\ldots, s$ (lemma~\ref{lem:GoodPosition}(4)).  Applying the relations $\<\sR^{Sect}_*(-)\>_{E_*}$ repeatedly,  we see that
\begin{multline*}
\cn(O_Y(C_*))^J\big([Z;F_J(O_Y(C_1),\ldots, O_Y(C_m))]_{E_*}\big)\\
=\eta^J_*[Z\cap C^J;F_J(O_Y(C_1),\ldots, O_Y(C_m))]_{E_*}.
\end{multline*}
Applying the same relations to the smooth divisor $Z\cap C^J$ on $C^J$, which by lemma~\ref{lem:GoodPosition}(2) is in good position with respect to $E_*$, we have
\begin{multline*}
\cn(O_Y(Z))\big([C^J;F_J(O_Y(C_1),\ldots, O_Y(C_m))]_{E_*}\big)\\
=\tau^J_*[Z\cap C^J;F_J(O_Y(C_1),\ldots, O_Y(C_m))]_{E_*}.
\end{multline*}
Since $\iota^J_*\circ \tau^J_*=\iota_{Y*}\circ\eta^J_*$, these two identities yield the equality \eqref{multline:eqn1} in this case. 

In case $J=(1,j_2,\ldots, j_m)$, then  $C^J$ is contained in $Z=C_1$. Letting $J'=(0,j_2,\ldots, j_m)$, we have $C^J=Z\cap C^{J'}$, and no component of $C^{J'}$ is contained in $Z$. Suppose that $J'\neq(0,\ldots,0)$.  As above, we have
\begin{multline*}
\iota^{J'}_*\big(\cn(O_Y(Z))([C^{J'};F_J(O_Y(C_1),\ldots, O_Y(C_m))]_{E_*})\big)\\=
\iota_{Z*}\big(\cn(O_Y(C_*))^{J'}([Z;F_J(O_Y(C_1),\ldots, O_Y(C_m))]_{E_*})\big)
\end{multline*}
in $\Omega_*(|D|)_{E_*}$. Also, the smooth divisor $C^J=Z\cap C^{J'}$ on $C^{J'}$ is in good position with respect to $E_*$, and
\begin{multline*}
\cn(O_Y(Z))([C^{J'};F_J(O_Y(C_1),\ldots, O_Y(C_m))]_{E_*})\\=
\tau^{J'}_*[C^J;F_J(O_Y(C_1),\ldots, O_Y(C_m))]_{E_*}
\end{multline*}
in $\Omega_*(C^{J'})_{E_*}$. As $\iota^J=\iota^J\circ \tau^{J'}$, this yields
\begin{align*}
\iota^{J}_*\big(\cn(O_Y(Z))(&[C^J;F_J(O_Y(C_1),\ldots)]_{E_*})\big)\\
&= \iota^{J'}_*\big(\cn(O_Y(Z))^2([C^{J'};F_J(O_Y(C_1),\ldots)]_{E_*})\big)\\
&= \cn(O_Y(Z))\big(\iota^{J'}_*\big(\cn(O_Y(Z))([C^{J'};F_J(O_Y(C_1),\ldots)]_{E_*})\big)\big)\\
&=\cn(O_Y(Z))\big(\iota_{Y*}(\cn(O_Y(C_*))^{J'}([Z;F_J(O_Y(C_1),\ldots)]_{E_*}))\big)\\
&=\iota_{Z*}\big((\cn(O_Y(C_*))^{J}([Z;F_J(O_Y(C_1),\ldots)]_{E_*})\big),
\end{align*}
verifying \eqref{multline:eqn1}.

If $J'=(0,\ldots,0)$, then $J=(1,0,\ldots,0)$, $C^J=C_1=Z$, $i^J=i_Z$, $\iota^J=\iota_Z$ and $\cn(O_Y(C_*))^J=\cn(O_Y(Z))$. The desired relation \eqref{multline:eqn1} becomes a simple identity in this case, finishing the proof.
\end{proof}

\section{Descent to $\Omega_*(X)_{D, E_*}$}\label{sec:Descent}  Let $X$ be a finite type $k$-scheme with pseudo-divisors $D, E_1,\ldots, E_r$ and a pseudo-divisor $C$ supported in $D$. We proceed in a series of steps to show that intersection with  $C$ descends to   $C(-)_{E_*}:\Omega_*(X)_{D, E_*}\to\Omega_{*-1}(|D|)_{E_*}$.

\medskip
\noindent
\emph{Step 1:} The descent to $\uu{\sZ}_*(X)_{D, E_*}$.   Let $\pi:Y\to Z$ be a  smooth morphism with $Z$ and $Y$ in $\Sm_k$ irreducible, $L_1,\ldots, L_m$ line bundles on $Z$ with $m>\dim_kZ$, and $f:Y\to X$ a morphism in $\sM(X)_{D, E_*}$. Using lemmas~\ref{lem:ProjectionFormula} and \ref{lem:ChernClassComp}, it suffices to show that $(f^*C)(\id_Y,\pi^*L_1,\ldots,\pi^*L_m)_{E_*}=0$ in $\Omega_*(|f^*D|)_{E_*}$. Changing notation, we may assume that $X=Y$, $f=\id_Y$, and that either $|D|\neq Y$ and  $\Div D$ is a simple normal crossing divisor on $Y$, or $|D|=Y$. We need to show that $C(\id_Y,\pi^*L_1,\ldots,\pi^*L_m)_{E_*}=0$ in $\Omega_*(|D|)_{E_*}$.

If $Y=|D|$, then 
\begin{align*}
C(\id_Y,\pi^*L_1,\ldots,\pi^*L_m)_{E_*}&=\cn(O_Y(C))(\id_Y,\pi^*L_1,\ldots,\pi^*L_m)\\
&=(\id_Y,\pi^*L_1,\ldots,\pi^*L_m,O_Y©),
\end{align*}
which is zero in $\Omega_*(|D|)_{E_*}=\Omega_*(Y)_{E_*}$ by the relations $\<\sR^{Dim}_*\>(Y)_{E_*}$.

If $Y\neq |D|$  with inclusion $i:|D|\to Y$ and $\Div D$ is a simple normal crossing divisor on $Y$, then 
\begin{multline*}
C\left(\id_Y,\pi^*L_1,\ldots,\pi^*L_m\right)_{E_*}\\=
\cn((\pi\circ i)^*L_1)\circ\ldots\circ\cn((\pi\circ i)^*L_m)([C\to|D|]_{E_*}).
\end{multline*}
To see that this class vanishes, apply  lemma~\ref{lem:StrongDim} to $\pi\circ i:|D|\to Z$.

\medskip
\noindent
\emph{Step 2:} The descent to $\uu{\Om}_*(X)_{D, E_*}$. Let $f:Y\to X$ be in $\sM(X)_{D, E_*}$, let $Z\to Y$ be a codimension one smooth closed subscheme in good position with respect to $D, E_*$ and let $C$ be a  pseudo-divisor on $X$ with support in $D$.  We may suppose that $Y$ is irreducible. As in step 1, we  reduce to the case $X=Y$ and $f=\id_Y$,  and it suffices to show that
\[
C([Y; O_Y(Z)]_{D, E_*})_{E_*}=C([Z\to Y]_{D,E_*})_{E_*}.
\]

If $Y=|D|$, then $\sM(Y)_{D, E_*}=\sM(Y)_{E_*}$, $[Z\to Y]_{D, E_*}=[Z\to Y]_{E_*}$ and the relations  $\<\sR^{Sect}_*\>(Y)_{E_*}$ yield
\begin{align*}
C([Y; O_Y(Z)]_{D, E_*})_{E_*}&=\cn(O_Y(C))\circ \cn(O_Y(Z))(1_Y^{E_*})\\
&=\cn(O_Y(C))([Z\to Y]_{E_*})\\
&=C([Z\to Y]_{D, E_*})_{E_*}.
\end{align*}
In case $Y\neq |D|$, then $\Div D$ is a simple normal crossing  divisor on $Y$, which  by lemma~\ref{lem:AdmissibleIntersection} is admissible with respect to $E_*$. The divisor $\Div C$ has support contained in $|D|$ and is therefore also  admissible with respect to $E_*$. Let $i:|D|\to Y$ be the inclusion.

Using lemma~\ref{lem:IntersectionFormula}, we have
\begin{align*}
C([Y; O_Y(Z)]_{D, E_*})_{E_*}&=\cn(i^*O_Y(Z))(C(1_Y^{D, E_*})_{E_*})\\
&=\cn(i^*O_Y(Z))([C\to|D|]_{E_*})\\
&=C([Z\to Y]_{D,E_*})_{E_*},
\end{align*}
as desired.

\medskip
\noindent
\emph{Step 3:} The descent to $\Omega_*(X)_{D, E_*}$. Let $f:Y\to X$ be in $\sM(X)_{D, E_*}$ and let $L$ and $M$ be line bundles on $Y$. As above, we may assume that $Y=X$, $f=\id_Y$, and it suffices  to show that
\[
C([Y; F_\L(L,M)]_{D,E_*})_{E_*}=C([Y; L\oo M]_{D, E_*})_{E_*}.
\]
 Using the relations
 $\<\sR_*^{FGL}\>(|D|)_{E_*}$ and lemma~\ref{lem:ChernClassComp2}, we
have
\begin{align*}
C\left([Y; F_\L(L,M)]_{D, E_*}\right)_{E_*}&=C\left(F_\L(\cn(L),\cn(M))(1^{D, E_*}_Y)\right)_{E_*}\\
&=F_\L(\cn(L),\cn(M))(C(1_Y^{D, E_*})_{E_*})\\
&=\cn(L\oo M)(C(1_Y^{D, E_*})_{E_*})\\
&=C([Y; L\oo M]_{D, E_*})_{E_*},
\end{align*}
as desired.

\section{Relations for intersections}\label{sec:Relations}

\subsection{Commutativity} We establish the 
commutativity of the intersection maps.  We begin with some preliminary results.

\begin{lemma}\label{lem:ClassPushforward} Let $E_1,\ldots, E_r$ be pseudo-divisors on some $Y\in\Sm_k$. Let $D$ and $B$ be effective Cartier divisors on $Y$. Suppose that $\id_Y$ is in $\sM(Y)_{D, E_*}\cap \sM(Y)_{B, E_*}$ and $D+B$ is a simple normal crossing divisor on $Y$. Let $C$ be an effective Cartier divisor on $Y$ with $|C|\subset |D|$. and let $i:|D|\cap |B|\to |B|$, $i_B:|B|\to Y$ be the inclusions. Then $B$ is admissible with respect to $D, E_*$ and $E_*$, and
\[
i_*\left(i_B^*C([B\to |B|]_{D, E_*})_{E_*}\right)=\cn(i^*_BO_Y(C))([B\to |B|]_{E_*}).
\]
\end{lemma}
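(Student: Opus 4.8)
The plan is to expand the refined divisor class $[B\to|B|]_{D,E_*}$ over the faces of $B$, to push the operator $i_*\circ(i_B^*C)(-)_{E_*}$ through the expansion using the projection formula and the compatibility of the intersection map with Chern class operators, and then to identify each resulting term by lemma~\ref{lem:DivClass}. Since everything in sight decomposes over the connected components of $Y$, we may assume $Y$ irreducible, and we may also assume $B\neq0$ and $D\neq0$ (if $D=0$ then $|C|\subset|D|$ forces $C=0$ and both sides vanish, and the case $B=0$ is similar). For the admissibility assertions: $\id_Y\in\sM(Y)_{D,E_*}$ with $D$ the leading pseudo-divisor for $\id_Y$, and $B+\Div D$ is a simple normal crossing divisor, so lemma~\ref{lem:AdmissibleSupport} (applied with $E=B$) shows $B$ is admissible with respect to $D,E_*$; and $\id_Y\in\sM(Y)_{B,E_*}$ with $B$ leading, so lemma~\ref{lem:AdmissibleIntersection} shows $\Div B$ is admissible with respect to $E_*$. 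Hence $[B\to|B|]_{D,E_*}\in\Omega_*(|B|)_{D,E_*}$ and $[B\to|B|]_{E_*}\in\Omega_*(|B|)_{E_*}$ are defined; and since $|C|\subset|D|$ makes $C$ supported in $D$, hence $i_B^*C$ supported in $i_B^*D$, the map $(i_B^*C)(-)_{E_*}:\Omega_*(|B|)_{i_B^*D,E_*}\to\Omega_{*-1}(|D|\cap|B|)_{E_*}$ is defined as well.

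Write $B=\sum_j b_jB_j$ with the $B_j$ irreducible, with faces $B^J=\cap_{j\in J}B_j$ and inclusions $\iota^J:B^J\to|B|$, $i_{B^J}:B^J\to Y$. Then $[B\to|B|]_{D,E_*}$ is the sum over $J$ of $\iota^J_*$ of the class $F^{b_1,\ldots,b_n}_J(\cn(i_{B^J}^*O_Y(B_1)),\ldots,\cn(i_{B^J}^*O_Y(B_n)))(1_{B^J}^{D,E_*})$, and likewise for $[B\to|B|]_{E_*}$ with $1_{B^J}^{E_*}$. Since each $B^J$ is smooth, we may further split these sums over the irreducible components of the faces, so it is enough to treat an irreducible face $B^J$. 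Applying $(i_B^*C)(-)_{E_*}$ and then $i_*$ to the first expansion, and using lemma~\ref{lem:ProjectionFormula}(1) for the maps $\iota^J$, lemma~\ref{lem:ChernClassComp2} (and its power-series extension), and the projection formula $\cn(L)\circ g_*=g_*\circ\cn(g^*L)$ for projective $g$, the asserted identity reduces to the claim that, for each irreducible face $B^J$,
\[
q^J_*\big((i_{B^J}^*C)(1_{B^J}^{D,E_*})_{E_*}\big)=\cn(i_{B^J}^*O_Y(C))(1_{B^J}^{E_*})\quad\text{in }\Omega_*(B^J)_{E_*},
\]
where $q^J:|i_{B^J}^*D|\to B^J$ is the inclusion.

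If $B^J\subset|D|$, then $\sM(B^J)_{D,E_*}=\sM(B^J)_{E_*}$, $q^J$ is the identity, and the two sides agree by the very definition of the intersection map in the case where the source maps into $|D|$. If $B^J\not\subset|D|$, then $D$ is the leading pseudo-divisor for $i_{B^J}:B^J\to Y$, so $i_{B^J}^*D$ is an effective Cartier divisor on $B^J$, and by definition $(i_{B^J}^*C)(1_{B^J}^{D,E_*})_{E_*}=[\Div i_{B^J}^*C\to|i_{B^J}^*D|]_{E_*}$; pushing forward along $q^J$ yields $[\Div i_{B^J}^*C\to B^J]_{E_*}$. By lemma~\ref{lem:DivClass} this equals $[B^J;O_{B^J}(i_{B^J}^*C)]_{E_*}=\cn(i_{B^J}^*O_Y(C))(1_{B^J}^{E_*})$, provided $\Div i_{B^J}^*C$ is a simple normal crossing divisor on $B^J$ that is in good position with respect to $E_*$.

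The verification of this good-position condition is the main obstacle. That $\Div i_{B^J}^*C$ is simple normal crossing on $B^J$ is clear, since $|C|\subset|D|$ makes it supported in $\Div i_{B^J}^*D$, a face-restriction of the simple normal crossing divisor $D+B$; and by remark~\ref{rem:GenPosition}(2) it then suffices to show that $\Div i_{B^J}^*D$ is in good position with respect to $E_*$ on $B^J$. For this I would invoke lemma~\ref{lem:GoodPosition} with $f=\id_Y$, taking $B$ in the role of the leading pseudo-divisor ``$D$'' of that lemma, $E_*$ in the role of ``$D_*$'', and $\Div D$ in the role of the good-position divisor ``$E$''; this is legitimate because $\id_Y\in\sM(Y)_{B,E_*}$ and $\Div D+\Div B$ is simple normal crossing, so $\Div D$ is in good position with respect to $B,E_*$. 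Part~(2) of lemma~\ref{lem:GoodPosition} then gives precisely that for each irreducible component $F$ of a face of $\Div B$ with $F\not\subset|D|$ --- that is, for each irreducible face $B^J\not\subset|D|$ --- the divisor $F\cap D=\Div i_F^*D$ is a simple normal crossing divisor on $F$ in good position with respect to $E_*$, which is what is needed. It is at this step that the symmetric hypothesis $\id_Y\in\sM(Y)_{D,E_*}\cap\sM(Y)_{B,E_*}$ is used; the remaining manipulations with the face decomposition are routine.
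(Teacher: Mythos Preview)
Your proof is correct and follows essentially the same approach as the paper: reduce to the facewise identity $q^J_*\big((i_{B^J}^*C)(1_{B^J}^{D,E_*})_{E_*}\big)=\cn(i_{B^J}^*O_Y(C))(1_{B^J}^{E_*})$, handle the case $B^J\subset|D|$ by definition and the case $B^J\not\subset|D|$ via lemma~\ref{lem:DivClass}, with the good-position hypothesis supplied by lemma~\ref{lem:GoodPosition}(2) applied with the roles of $B$ and $D$ exchanged. The only cosmetic difference is that the paper applies lemma~\ref{lem:GoodPosition}(2) directly with $E=C$ (using that $C$ itself is in good position with respect to $B,E_*$), whereas you apply it with $E=\Div D$ and then pass to $C$ via remark~\ref{rem:GenPosition}(2); both are valid.
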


\begin{proof}  As $B$ is in good position with respect to $D, E_*$, it follows from remark~\ref{rem:GenPosition}(1) that  $B$ is admissible with respect to $D, E_*$ . Since $\id_Y$ is in $\sM(Y)_{B, E_*}$, $B$ is admissible with respect to $E_*$ by lemma~\ref{lem:GoodPosition}(1). Thus the terms in the conclusion are all defined. 

We note that $C$ is in good position with respect to $B, E_*$. Let $F$ be an irreducible component of a face of $B$. Suppose that $F\not\subset |D|$. We apply lemma~\ref{lem:GoodPosition}(3) with $E=C$, which tells us that $C\cap F$ is a simple normal crossing divisor on $F$, in good position with respect to $E_*$. Thus, letting $i_F:F\to Y$, $\iota_F:|D|\cap F\to F$ be the inclusions, we have
\[
\iota_{F*}\left(i_F^*C(1_F^{D, E_*})_{E_*}\right)=\iota_{F*}\left([C\cap F\to |D|\cap F]_{E_*}\right)=\cn(i_F^*O_Y(C))(1_F^{E_*}),
\]
the first equality being the definition of the intersection and the second following from lemma~\ref{lem:DivClass}. In case $F\subset |D|$, then 
$\iota_F=\id_F$ and
\[
\iota_{F*}\left(i_F^*C(1_F^{D, E_*})_{E_*}\right)=i_F^*C(1_F^{D, E_*})_{E_*}=\cn(i_F^*O_Y(C))(1_F^{E_*}).
\]
Thus, for each face $B^J$ of $B$ with inclusion $\iota^J:B^J\to |B|$, we have
\[
i_*\left(i_B^*C(\iota^J_*(1_{B^J}^{D, E_*}))_{E_*}\right)=\cn(i_B^*O_Y(C))(\iota^J_*(1^{E_*}_{B^J})).
\]

For $B=\sum_{i=1}^mn_i B_i$, the divisor class $[B\to |B|]_{D, E_*}$ is
\[
[B\to |B|]_{D, E_*}=\sum_JF_J(\cn(L_1),\ldots, \cn(L_m))(\iota^J_*(1_{B^J}^{D, E_*})),
\]
where $L_j=i_B^*O_Y(B_j)$ and $F_J=F_J^{n_1,\ldots, n_m}$. Similarly,
\[
[B\to |B|]_{E_*}=\sum_JF_J(\cn(L_1),\ldots, \cn(L_m))(\iota^J_*(1_{B^J}^{E_*})).
\]
Thus
\begin{align*}
i_*(i_B^*C([B\to |B|&]_{D, E_*})_{E_*})\\&=\sum_JF_J(\cn(L_1),\ldots, \cn(L_m))\left(i_*(i_B^*C(\iota^J_*(1_{B^J}^{D, E_*}))_{E_*})\right)\\
&=\sum_JF_J(\cn(L_1),\ldots, \cn(L_m))\left(\cn(i_B^*O_Y(C))(\iota^J_*(1^{E_*}_{B^J}))\right)\\
&=\cn(i_B^*O_Y(C))\left(\sum_JF_J(\cn(L_1),\ldots, \cn(L_m))(\iota^J_*(1_{B^J}^{E_*}))\right)\\
&=\cn(i_B^*O_Y(C))([B\to|B|]_{E_*}).
\end{align*} 
\end{proof}

Write the universal formal group law as $F_\L(u,v)=u+v+uvF_{11}(u,v)$ and let $G_{11}(u,v)=vF_{11}(u,v)$.

\begin{lemma}\label{lem:addition} Let $W$ be in $\Sm_k$ with pseudo-divisors $E_1,\ldots, E_r$ and simple normal crossing divisor $D$ such that that $\id_W$ is in   $\sM(W)_{D, E_*}$.  Let $C$ be an effective Cartier divisor on $W$, with $|C|\subset |D|$ and suppose $C=C_0+C_1$, with $C_0>0$, $C_1>0$, and $C_1$  smooth. \\[5pt]
(1)  We have the identity in $\Omega_*(|D|)_{E_*}$
\begin{multline*}
[C\to|D|]_{E_*}=[C_0\to|D|]_{E_*}+[C_1\to|D|]_{E_*}\\
+G_{11}(\cn(O_W(C_1)),\cn(O_W(C_0)))([C_1\to|D|]_{E_*}).
\end{multline*}
\ \\[5pt]
(2) Let $f:Y\to W$ be in  $\sM(W)_{D, E_*}$.  Suppose that $Y$ is irreducible and either $f(Y)\subset |D|$ or $f^*C_1$ is a smooth divisor on $Y$. Then
\begin{align*}
(f^*C)(&1_Y^{D, E_*})_{E_*}\\
&= (f^*C_0)(1^{D, E_*}_Y)_{E_*}+( f^*C_1)(1_Y^{D, E_*})_{E_*}\\
&\hskip 20pt+ G_{11}\big(\cn(O_Y(f^*C_1)),\cn(O_Y(f^*C_0))\big)
((f^*C_1)(1_Y^{D, E_*})_{E_*})
\end{align*}
in $\Omega_*(|f^*D|)_{E_*}$.
\end{lemma}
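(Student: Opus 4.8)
The plan is to prove part (1) first as a purely formal-group-law computation with the divisor classes, and then to deduce part (2) by pulling back along $f$ and applying lemma~\ref{lem:IntersectionFormula} (or lemma~\ref{lem:DivClassCommutativity}) to convert Chern-class operators on divisor classes into intersection operations.

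For part (1), I would start from the definition of the refined divisor class. Since $C=C_0+C_1$ with $C_1$ smooth and $C_0,C_1>0$, I would write $C_0=\sum_{i=1}^m n_iC_i'$ with the $C_i'$ irreducible (disjoint from $C_1$ after possibly collecting common components, or more carefully allowing $C_1$ to share components with $C_0$ — here the cleanest route is to treat $C_1$ as one extra smooth summand and invoke lemma~\ref{lem:Additivity} to compute $[C\to|D|]_{E_*}$ using the smooth-but-not-irreducible decomposition $C = C_0 + 1\cdot C_1$). The formal group law gives $F_\L(u,v) = u + v + uvF_{11}(u,v) = u + v + G_{11}(u,v)\cdot u$ where I set $v$ to be the variable for $C_1$; concretely, writing $F^{n_1,\ldots,n_m,1}$ for the sum $n_1\cdot_F u_1 +_F\cdots+_F n_m\cdot_F u_m +_F w$ with $w$ the variable attached to $C_1$, one has the decomposition into the $w^0$-part, which is $F^{n_1,\ldots,n_m}(u_*)$ and contributes $[C_0\to|D|]_{E_*}$, plus the $w^1$-part. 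The $w^1$-part, after applying $\cn(O_W(C_1))$, splits as the ``bare'' $[C_1\to|D|]_{E_*}$ term coming from $w$ itself plus the correction term $G_{11}(\cn(O_W(C_1)),\cn(O_W(C_0)))([C_1\to|D|]_{E_*})$; here one uses that $G_{11}(u,v) = vF_{11}(u,v)$ has a factor of $v$ (the $C_0$-variable), so its action on $[C_1\to|D|]_{E_*}$ is well-defined, and that on faces of $C$ meeting $C_1$ the $C_0$-variable restricts compatibly. This is essentially a bookkeeping computation with the canonical decomposition $F^{n_1,\ldots,n_m}=\sum_J u^J F_J^{n_1,\ldots,n_m}$, carried out exactly as in the proof of lemma~\ref{lem:DivClass}, and I would keep it short by referring to those conventions.

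For part (2), I would pull the identity of (1) back along $f$. If $f(Y)\subset|D|$, then $(f^*C)(1_Y^{D,E_*})_{E_*}=\cn(O_Y(f^*C))(1_Y^{E_*})$ and the whole statement reduces to the formal group law identity $\cn(O_Y(f^*C_0\otimes f^*C_1))=F_\L(\cn(O_Y(f^*C_0)),\cn(O_Y(f^*C_1)))$ applied to $1_Y^{E_*}$, using the relations $\<\sR_*^{FGL}\>$, so this case is immediate. If $f(Y)\not\subset|D|$, then $D$ is the leading pseudo-divisor for $f$, $\Div f^*D$ is a simple normal crossing divisor, and $f^*C_1$ is smooth by hypothesis; by definition $(f^*C)(1_Y^{D,E_*})_{E_*} = [\Div f^*C\to|f^*D|]_{E_*}$ (pushed to $|D|$), and similarly for $C_0$, $C_1$. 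Since pullback of effective Cartier divisors is compatible with the decomposition $C=C_0+C_1$ (so $\Div f^*C = \Div f^*C_0 + \Div f^*C_1$ with $\Div f^*C_1$ smooth), and since $\cn$ commutes with pullback, applying part (1) on $Y$ with $D$ replaced by $\Div f^*D$ and then pushing forward by $f^D:|f^*D|\to|D|$ gives exactly the claimed identity; one only needs that all the classes involved are defined, which follows from the admissibility statements already assembled in the proofs of lemmas~\ref{lem:IntersectionFormula} and~\ref{lem:DivClassCommutativity} (namely $\Div f^*D$ admissible for $E_*$, hence $\Div f^*C$ and $\Div f^*C_0$, $\Div f^*C_1$ admissible for $E_*$).

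**Main obstacle.** The delicate point is the algebra of the $G_{11}$-term in part (1): one must verify that the $w^1$-part of the decomposed formal group law sum, after restriction to the various faces of $C$, reassembles precisely into $[C_1\to|D|]_{E_*} + G_{11}(\cn(O_W(C_1)),\cn(O_W(C_0)))([C_1\to|D|]_{E_*})$ rather than some other combination — in particular that the faces of $C$ that lie inside $C_1$ but also meet components of $C_0$ are accounted for exactly once and with the correct coefficient power series. Handling this cleanly is easiest by applying lemma~\ref{lem:Additivity} to reduce to the irreducible-component decomposition and then matching the $F_J$ power series term by term against the factorization $F_\L(u,v)=u+v+G_{11}(u,v)u$; the rest of the argument is routine given the machinery already developed.
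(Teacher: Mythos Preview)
Your overall plan matches the paper: prove (1) by a formal-group-law bookkeeping on divisor classes, then derive (2) by the case split on $f(Y)\subset|D|$ versus not; your treatment of (2) is exactly the paper's.

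There is, however, a real gap in your sketch of (1). You propose to invoke lemma~\ref{lem:Additivity} with the decomposition $C=C_0+1\cdot C_1$, but that lemma requires the summands to have no common irreducible components, and nothing in the statement prevents $C_1$ from appearing again in $C_0$ (e.g.\ $C=2E$ with $E$ smooth irreducible, $C_0=C_1=E$). Your parenthetical ``or more carefully allowing $C_1$ to share components with $C_0$'' is precisely where the work lies, and it is not resolved by lemma~\ref{lem:Additivity}. The paper handles this in two steps. First it reduces to irreducible $C_1$ by writing $C_1=C_1'+C_1''$ (disjoint since $C_1$ is smooth) and showing, via vanishing of mixed products $\cn(O_W(C_1'))\circ\cn(O_W(C_1''))$ on the relevant classes, that (1) for the three decompositions $C'=C_0+C_1'$, $C=C'+C_1''$, $C_1=C_1'+C_1''$ implies (1) for $C=C_0+C_1$. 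Second, with $C_1$ irreducible, it writes $C=\sum_{i=1}^m n_iC_i$ with $C_1$ the first component, so that $C_0$ corresponds to the power series $V=F^{n_1-1,n_2,\ldots,n_m}(u_1,\ldots,u_m)$; the identity $\sum_J u^JF_J^{n_1,\ldots,n_m}=u_1+V+u_1VF_{11}(u_1,V)$ then organizes the face-by-face comparison, and repeated applications of $\<\sR_*^{Sect}\>(-)_{E_*}$ (using that the codimension-one inclusions among faces are in good position with respect to $E_*$, via lemma~\ref{lem:GoodPosition}) collapse the sum to the claimed $G_{11}$-term. Your ``main obstacle'' paragraph correctly identifies that the matching of $F_J$-terms is the crux, but you should be aware that the $n_1-1$ shift in $V$ is what makes the algebra close when $C_1$ occurs in $C_0$.
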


\begin{proof}   For (2), suppose first that $|f^*D|=Y$.   Then $\sM(Y)_{D, E_*}=\sM(Y)_{E_*}$ and by definition
\begin{align*}
&(f^*C)(1_Y^{D, E_*})_{E_*}=\cn(O_Y(f^*C))(1_Y^{E_*}),\\
&(f^*C_i)(1_Y^{D, E_*})_{E_*}=\cn(O_Y(f^*C_i))(1_Y^{E_*});\quad i=0,1.
\end{align*}
Since $O_Y(f^*C)=O_Y(f^*C_0)\otimes O_Y(f^*C_1)$ we have 
\begin{multline*}
\cn(O_Y(f^*C))=\cn(O_Y(f^*C_0))+\cn(O_Y(f^*C_1))\\+ G_{11}(\cn(O_Y(f^*C_1)),\cn(O_Y(f^*C_0))\circ \cn(O_Y(f^*C_1)).
\end{multline*}
Thus (2) follows by applying this identity to $1_Y^{E_*}$.

If $|f^*D|\neq Y$, then (2)  is a consequence of (1).  Indeed, in this case, $f^*D$ is a simple normal crossing divisor on $Y$  by lemma~\ref{lem:AdmissibleIntersection} and
\begin{gather*}
(f^*C)(1_Y^{D, E_*})_{E_*}=[f^*C\to|f^*D|]_{E_*}\\
(f^*C_i)(1_Y^{D, E_*})_{E_*}=[f^*C_i\to|f^*D|]_{E_*},\ i=0,1.
\end{gather*}
Thus, applying the first assertion to the divisor  $f^*C=f^*C_0+f^*C_1$ on $Y$,  we have
\begin{align*}
(&f^*C)(1_Y^{D, E_*})_{E_*}=[f^*C\to|f^*D|]_{E_*}\\
&=[f^*C_0\to|f^*D|]_{E_*}+[f^*C_1\to|f^*D|]_{E_*}\\
&\hskip10pt+G_{11}(\cn(O_Y(f^*C_1)),\cn(O_Y(f^*C_0)))([f^*C_1\to|f^*D|]_{E_*})\\
&=(f^*C_0)(1^{D, E_*}_Y)_{E_*}+(f^*C_1)(1^{D, E_*}_Y)_{E_*}\\
&\hskip10pt+G_{11}(\cn(O_Y(f^*C_1)),\cn(O_Y(f^*C_0)))((f^*C_1)(1^{D_1, E_*}_Y)_{E_*}).
\end{align*}

We now prove (1).  By lemma~\ref{lem:AdmissibleIntersection}, $D$  is admissible with respect to $E_*$. Thus 
$C$, $C_0$ and $C_1$ are all admissible with respect to $E_*$, so the divisor classes $[C\to|D|]_{E_*}$, $[C_0\to|D|]_{E_*}$ and $[C_1\to|D|]_{E_*}$ are all defined.

We first reduce to the case of irreducible $C_1$.   In general,   suppose  $C_1=C_1'+C_1''$ with $C_1'>0$, $C_1''>0$; as  $C_1$ is admissible with respect to $E_*$, so are $C_1'$ and $C_1''$. 

Since $C_1'$ and $C_1''$ are disjoint,  applying the relations $\<\sR^{Sect}_*\>(-)_{E_*}$ gives
\[
\cn(O_W(C''_1))([C'_1\to |D|]_{E_*})=0=\cn(O_W(C'_1))([C''_1\to |D|]_{E_*}). 
\]
Noting that $O_W(C')=O_W(C_0)\otimes O_W(C'_1)$, this gives
\begin{align*}
G_{11}&(\cn(O_W(C_1'')),\cn(O_W(C'))([C_1''\to|D|]_{E_*})\\
&=G_{11}(\cn(O_W(C_1'')),F_\L(\cn(O_W(C_0),\cn(C'_1)))([C_1''\to|D|]_{E_*})\\
&=G_{11}(\cn(O_W(C_1'')),\cn(O_W(C_0))([C_1''\to|D|]_{E_*}),
\end{align*}
and
\begin{multline*}
G_{11}(\cn(O_W(C_1'')), \cn(O_W(C_1'))([C_1''\to |D|]_{E_*})\\=
F_{11}(\cn(O_W(C_1'')), \cn(O_W(C_1'))\circ \cn(C_1')([C_1''\to |D|]_{E_*})=0.
\end{multline*}

Additionally, the definition of the divisor class tells us that
\[
[C_1\to|D|]_{E_*}=[C'_1\to|D|]_{E_*}+[C''_1\to|D|]_{E_*}.
\]
We therefore have
\[
\cn(O_W(C''_1)\circ \cn(O_W(C'_1)([C_1\to|D|]_{E_*})=0,
\]
giving   the identities
\begin{align*}
G_{11}&(\cn(O_W(C_1)),\cn(O_W(C_0)))([C_1\to|D|]_{E_*})\\
&=G_{11}(\cn(O_W(C'_1))+\cn(O_W(C_1'')),\cn(O_W(C_0)))([C_1\to|D|]_{E_*})\\
&=G_{11}(\cn(O_W(C'_1),\cn(O_W(C_0)))([C'_1\to|D|]_{E_*})\\
&\hskip 30pt+G_{11}(\cn(O_W(C_1''),\cn(O_W(C_0)))([C_1''\to|D|]_{E_*}).
\end{align*}
With these formulas, one easily shows that (1) for the decompositions  $C'=C_0+C'_1$, $C=C'+C''_1$ and $C_1=C_1'+C''_1$ implies (1) for  $C=C_0+C_1$.  

We now assume $C_1$ is irreducible. Write $C=\sum_{i=1}^mn_iC_i$, with each $C_i$ irreducible. For each face
$C^J$ of $C$, let $\iota^J:C^J\to |D|$ be the inclusion; if $J=(1,j_2,,\ldots, j_m)$, then $C^J\subset C_1$ and we let $\iota^J_1:C^J\to C_1$ be the inclusion. We write $\iota^1:C_1\to|D|$ for the inclusion $\iota^{(1,0,\ldots, 0)}$.

Let $F_n$ denote the $n$-fold sum in the formal group $(F_\L,\L_*)$.  We have $F_{(0,j_2,\ldots,j_m)}^{0,n_2,\ldots, n_m}(u_1,u_2,\ldots, u_m)=F_{(j_2,\ldots, j_m)}^{n_2,\ldots, n_m}(u_2,\ldots, u_m)$ and $F_{(j_1,\ldots,j_m)}^{0,n_2,\ldots, n_m}=0$ if $j_1\neq0$.

The identity $F_\L(u_1, F_{n-1}(u_2,\ldots,u_n))=F_n(u_1,\ldots, u_n)$ gives us the identity
\begin{equation}\label{eqn:Splitting}
\sum_Ju^JF_J^{n_1,\ldots, n_m}(u_1,\ldots, u_m)=u_1+V+ u_1VF_{11}(u_1,V),
\end{equation}
where  
\[
V= F^{n_1-1,\ldots, n_m}(u_1,\ldots, u_m)=
\sum_{J'}u^{J'}F_{J'}^{n_1-1,n_2,\ldots, n_m}(u_1,u_2,\ldots,u_m).
\]
Here the sum $\sum_{J'}$ is over all faces of $C_0$ (with the convention of using indices $J'=(0, j_2,\ldots, j_m)\in\{0,1\}^m$ in case $n_1=1$).

Write $u_1VF_{11}(u_1,V)=u_1G_{11}(u_1,V)$ as the sum
\[
u_1VF_{11}(u_1,V)=\sum_Ku^KF'_K(u_1,\ldots,u_m),
\]
where the sum is over all indices $K=(1, k_2\ldots, k_m)\in\{0,1\}^m$ and we follow our usual convention of requiring that $F'_K$ does not involve $u_i$ if $k_i=0$.  The identity \eqref{eqn:Splitting} yields
\[
F^{n_1,\ldots, n_m}_{(1,0,\ldots,0)}=1+F^{n_1-1,\ldots, n_m}_{(1,0,\ldots,0)}+F'_{(1,0,\ldots,0)},
\]
and  for $K=(1,k_2,\ldots, k_m)\neq (1,0,\ldots,0)$ and for $J'=(0,j_2,\ldots, j_m)$
\begin{align*}
F^{n_1,\ldots, n_m}_K&=F^{n_1-1,\ldots, n_m}_K+F'_K\\
F^{n_1,\ldots, n_m}_{J'}&=F^{n_1-1,\ldots, n_m}_{J'}.
\end{align*}
Thus, comparing the definitions of  $[C\to|D|]_{E_*}$, $[C_1\to|D|]_{E_*}$ and $[C_0\to|D|]_{E_*}$ , we see that
\begin{multline*}
[C\to|D|]_{E_*}-[C_1\to|D|]_{E_*}-[C_0\to|D|]_{E_*}\\
={\sum_K}'\iota^K_*F'_K(\cn(O(C_1)),\ldots,\cn(O(C_m)))(1^{E_*}_{C^K}),
\end{multline*}
where $\sum'_K$ means the sum over all $K=(1,k_2,\ldots, k_n)\in\{0,1\}^m$. We therefore need to show
\begin{multline}\label{multline:ToShow}
{\sum_K}'\iota^K_*F'_K(\cn(O(C_1)),\ldots,\cn(O(C_m)))(1^{E_*}_{C^K})\\
=G_{11}(\cn(O(C_1)),\cn(O(C_0)))([C_1\to|D|]_{E_*}).
\end{multline}

We note that $C_i$ is in good position with respect to $D, E_*$ for  $i=0,1$. Let $K=(1, k_2,\ldots, k_m)$, with an index $k_i=1$, and let $K'=(1, k_2,\ldots, k_{i-1}, k_i-1, k_{i+1},\ldots, k_m)$.  By lemma~\ref{lem:GoodPosition}(2), the smooth divisor $C^K=C^{K'}\cap C_i$ on $C^{K'}$ is in good position with respect to $E_*$.  By repeated applications of the relations $\<\sR_*^{Sect}\>(-)_{E_*}$, we have
\begin{multline}\label{multline:Identity1}
\iota^K_{1*}\big(F'_K(\cn(O(C_1)),\ldots,\cn(O(C_m)))(1^{E_*}_{C^K})\big)\\
=\big(F'_K(\cn(O(C_1)),\ldots,\cn(O(C_m)))\circ\cn(O(C_*))^{K-1}\big)(1^{E_*}_{C_1}).
\end{multline}
where $K-1:=(0,k_2,\ldots, k_n)$. Note that 
\begin{equation}\label{eqn:SumRelation}
G_{11}(u_1,V)={\sum_K}'u^{K-1}F'_K(u_1,\ldots,u_m).
\end{equation}

The relations \eqref{multline:Identity1}, \eqref{eqn:SumRelation} together with the identities
\[
V(\cn(O(C_1)),\ldots,\cn(O(C_m)))=\cn(O(C_0)),\ \iota^K_*=\iota^1_*\circ \iota^K_{1*},
\]
imply
\begin{align*}
{\sum_K}'&\iota^K_*\left(F'_K(\cn(O(C_1)),\ldots,\cn(O(C_m)))(1^{E_*}_{C^K})\right)\\
&={\sum_K}'\iota_{1*}\left(\cn(O(C_*))^{K-1}F'_K(\cn(O(C_1)),\ldots,\cn(O(C_m)))(1^{E_*}_{C_1})\right)\\
&=\iota_{1*}\left(G_{11}(\cn(O(C_1)),V(\cn(O(C_1)),\ldots,\cn(O(C_m))))(1^{E_*}_{C_1})\right)\\
&=G_{11}(\cn(O(C_1)),\cn(O(C_0)))([C_1\to|D|]_{E_*}).
 \end{align*}
This verifies  \eqref{multline:ToShow}, completing the proof.
\end{proof}

\begin{proposition}[Commutativity]\label{prop:Commutativity} 
Let $T$ be in $\Sm_k$, irreducible, let $E_1,\ldots, E_r$ be pseudo-divisors on $T$, and let  $D, D'$ be effective Cartier divisors  on $T$. Suppose  that $D+D'$ is a simple normal crossing divisor on $T$, and that $\id_T$ is in $\sM(T)_{D, E_*}\cap \sM(T)_{D', E_*}$. Let $i_D:|D|\to T$ and $i_{D'}:|D'|\to T$ be the inclusions and let $C$ and $C'$ be effective Cartier divisors on $T$ with $|C|\subset |D|$, $|C'|\subset |D'|$. Then $C'$ is admissible with respect to $D, E_*$, $C$ is admissible with respect to $D', E_*$ and 
\[
(i_{D'}^*C)([C'\to |D'|]_{D, E_*})_{E_*}=(i_D^*C')([C\to|D|]_{D', E_*'})_{E_*}
\]
in $\Omega_*(|D|\cap|D'|)_{E_*}$.
\end{proposition}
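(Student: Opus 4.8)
\subsection*{Proof proposal}

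The approach is to subject both sides of the asserted identity to the same ``expansion over faces'' procedure, reducing each to a single doubly-indexed sum of pushed-forward Chern class expressions indexed by the faces of $C$ and of $C'$; since the two resulting sums differ only by the order in which a product of commuting Chern class operators is composed, they coincide. One may assume $D$, $D'$, $C$, $C'$ are all nonzero, since otherwise $|D|\cap|D'|=\0$ or one of the pseudo-divisors has empty divisor, and both sides vanish.

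First I would dispose of the admissibility claims and check that every term below is defined. Since $|D|\neq T$, $D$ is the leading pseudo-divisor for $\id_T$ with respect to $D,E_*$; as $D+D'$ is a simple normal crossing divisor, $D'$ is in good position with respect to $D,E_*$, hence $C'$ (whose components are among those of $D'$, so that $C'+D\le D'+D$ is simple normal crossing) is in good position with respect to $D,E_*$, and by remark~\ref{rem:GenPosition}(1) it is admissible with respect to $D,E_*$; symmetrically $C$ is admissible with respect to $D',E_*$, so the two divisor classes in the statement are defined. By lemma~\ref{lem:AdmissibleIntersection}, $\Div D$ and $\Div D'$ are simple normal crossing divisors admissible with respect to $E_*$, and a short iteration of lemma~\ref{lem:AdmissibleIntersection} (restricting $D$ to faces of $D'$, which is legitimate because $D+D'$ is simple normal crossing) shows that every face of $D+D'$ maps to $T$ by a morphism in $\sM(T)_{E_*}$; with the previous sentence this guarantees all the intersection products, restricted divisor classes, and unit classes occurring below are defined. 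Write $C=\sum_i a_iD_i$, $C'=\sum_j b_jD'_j$ with the $D_i$ (resp.\ $D'_j$) the smooth irreducible components occurring, and set $D^K=\cap_{i\in K}D_i$, $D'^J=\cap_{j\in J}D'_j$. Because $D+D'$ is simple normal crossing, $D^K\cap D'^J$ is a face of $D+D'$ (empty, or smooth of codimension $|K|+|J|$), and no nonempty $D'^J$ lies in $|D|$ (nor any nonempty $D^K$ in $|D'|$): otherwise $D^K\cap D'^J$ would have codimension both $|K|+|J|$ and $|J|$ in $T$.

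I would then expand the left-hand side, working at the level of $\L_*\oo\uu{\sZ}_*$. By definition of the refined divisor class, $\uu{[C'\to|D'|]}_{D,E_*}=\sum_J(\iota'^J)_*\big(\uu{F}^{\,b_*}_J(\cn(O_T(D'_*)))(1^{D,E_*}_{C'^J})\big)$, the sum over nonempty $J$, with $\iota'^J:C'^J\to|D'|$ the inclusion and $F^{\,b_*}_J$ the power series of \S\ref{subsec:RefDivClass}. Applying $\uu{i^*_{D'}C}(-)_{E_*}$ and using lemma~\ref{lem:ChernClassComp2} to pull the power series in Chern classes together with the pushforward out of the intersection, then the definition of the intersection map at a cobordism cycle carrying no line bundle --- namely $\uu{C|_{C'^J}}(1^{D,E_*}_{C'^J})_{E_*}=\uu{[C\cap C'^J\to|D\cap C'^J|]}_{E_*}$, which holds precisely because $C'^J\not\subset|D|$ --- and once more the definition of the refined divisor class (now on $C'^J$, for the divisor $C\cap C'^J=\sum_i a_i(D_i\cap C'^J)$), one obtains, after moving the outer power series past the inner pushforwards by the compatibility of Chern classes with proper push-forward,
\[
\uu{i^*_{D'}C}\big(\uu{[C'\to|D'|]}_{D,E_*}\big)_{E_*}=\sum_{J,K}\lambda^{JK}_*\Big(\big(\uu{F}^{\,b_*}_J(\cn(O_T(D'_*)))\circ\uu{F}^{\,a_*}_K(\cn(O_T(D_*)))\big)(1^{E_*}_{D^K\cap D'^J})\Big),
\]
where $\lambda^{JK}:D^K\cap D'^J\to|D|\cap|D'|$ is the inclusion and all line bundles are restricted to $D^K\cap D'^J$.

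Running the identical computation on $\uu{i^*_{D}C'}\big(\uu{[C\to|D|]}_{D',E_*}\big)_{E_*}$ --- expanding first over the faces $D^K$ of $C$ and then over the faces $D'^J$ of $C'$ --- produces the same sum, except with $\uu{F}^{\,a_*}_K(\cn(O_T(D_*)))\circ\uu{F}^{\,b_*}_J(\cn(O_T(D'_*)))$ in place of $\uu{F}^{\,b_*}_J(\cn(O_T(D'_*)))\circ\uu{F}^{\,a_*}_K(\cn(O_T(D_*)))$. Since all operators $\cn$ commute with one another, the two sums agree in $\L_*\oo\uu{\sZ}_*(|D|\cap|D'|)_{E_*}$; applying the canonical surjection to $\Omega_*(|D|\cap|D'|)_{E_*}$ yields the proposition. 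The points requiring care are the admissibility bookkeeping of the second paragraph and the verification that lemma~\ref{lem:ChernClassComp2} together with the push-forward--Chern-class commutation applies termwise (so that each expansion is valid); once this is in place, the equality of the two sums is purely formal, and this --- rather than any geometric input --- is what makes the argument work.
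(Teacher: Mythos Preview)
Your double expansion over faces is the natural first attempt and, when the components of $C$ are disjoint from those of $D'$ and the components of $C'$ are disjoint from those of $D$, it works and is cleaner than the paper's induction. But the hypothesis ``$D+D'$ is a simple normal crossing divisor'' does not forbid $D$ and $D'$ from sharing irreducible components --- take for instance $D=D'$ a smooth divisor, so $D+D'=2D$ is simple normal crossing --- and in that situation your key claim ``no nonempty $D'^J$ lies in $|D|$'' is false. Your codimension argument implicitly assumes that the $D_i$ and the $D'_j$ are all distinct components of $D+D'$; when some $D'_{j_0}$ coincides with a component of $D$, every face $D'^J$ with $j_0\in J$ lies in $|D|$, and then by definition $\un{C}(1_{C'^J}^{D,E_*})_{E_*}=\cn(\sO(C))(1_{C'^J}^{E_*})$ rather than a divisor class supported on a proper subscheme, so your expansion over the $D^K\cap D'^J$ no longer matches term by term with the other side. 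The paper confronts this directly: it inducts on the total multiplicity $\sum m_i+\sum m'_j$ of $C+C'$, handles in the base case (both $C$ and $C'$ irreducible) the sub-cases $C=C'$, $C\subset|D'|$ but $C'\not\subset|D|$, and so on by explicit computation, and for the inductive step peels off one irreducible summand using the addition formula of lemma~\ref{lem:addition}.

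There is also a smaller technical slip. You assert the identity already in $\L_*\otimes\un{\sZ}_*(|D|\cap|D'|)_{E_*}$, but the inner step --- writing $\un{[C\cap C'^J\to\cdot]}_{E_*}$ as a sum indexed by the possibly reducible divisors $D_i\cap C'^J$ rather than by their irreducible components --- is exactly the content of lemma~\ref{lem:Additivity}, whose proof uses both $\<\sR^{Sect}_*\>$ and $\<\sR^{FGL}_*\>$ and therefore holds only in $\Omega_*$. Working in $\Omega_*$ throughout repairs this point, but you are still left with the shared-component case above.
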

 
\begin{proof}   Our hypotheses on $T$, $D$ and $D'$ imply that $D$ is in good position with respect to $D', E_*$,  and  $D'$ is in good position with respect to $D, E_*$. By remark~\ref{rem:GenPosition},   $C$ is in good position and admissible with respect to $D', E_*$,  and  $C'$ is in good position and admissible with respect to $D, E_*$, so all the terms in the conclusion are defined.  

Write $D=\sum_in_iD_i$, $D'=\sum_jn'_jD'_j$ with each $D_i$, $D_j'$ irreducible, and similarly $C=\sum_im_iD_i$ and $C'=\sum_jm'_jD'_j$ with $0\le m_i$, $0\le m_j'$. We proceed by induction on $m:=\sum_im_i$ and $m':=\sum_im'_i$.

Suppose $m=m'=1$; we may suppose $C=D_1$ and $C'=D'_1$.  In case $C=C'$, then $C\subset |D'|$ and $C'\subset |D|$, so
\begin{align*}
(i_{D'}^*C)([C'\to |D'|]_{D, E_*})_{E_*}&=\cn(\sO(C))([C'\to |D|\cap|D'|])\\&= (i_D^*C')([C\to|D|]_{D', E_*'})_{E_*}
\end{align*}
in $\Omega_*(|D|\cap|D'|)_{E_*}$. 

Suppose $C\neq C'$. By lemma~\ref{lem:GoodPosition}(2) the smooth divisor $C\cap C'$ on $C'$ is in good position with respect to $E_*$; similarly, the smooth divisor $C\cap C'$ on $C$ is also in good position with respect to $E_*$. Applying the relations $\<\sR^{Sect}_*\>(-)_{E_*}$ gives us the identities
\begin{align*}
&[C\cap C'\to |C|]_{E_*}=\cn(O_Y(C'))(1_C^{E_*})\text{ in }\Omega_*(|C|)_{E_*},\\
&[C\cap C'\to |C'|]_{E_*}=\cn(O_Y(C))(1_{C'}^{E_*})\text{ in }\Omega_*(|C'|)_{E_*}.
\end{align*}
Suppose $C\subset |D'|$ but $C'\not\subset |D|$. Then pushing forward the first identity to $|D|\cap |D'|$ gives
\[
[C\cap C'\to |D|\cap|D'|]_{E_*}=i^{C}_*(\cn(O_Y(C'))(1_{C}^{E_*}))
\]
in $\Omega_*(|D|\cap|D'|)_{E_*}$, where $i^C:C\to |D|\cap |D'|$ is the inclusion. This yields the identities in $\Omega_*(|D|\cap|D'|)_{E_*}$
\begin{align*}
(i_{D}^*C')([C\to |D|]_{D', E_*})_{E_*}&=i^C_*(\cn(O_Y(C'))(1_C^{E_*}))\\
&=[C\cap C'\to |D|\cap|D'|]_{E_*}\\
&=(i_{D'}^*C)([C'\to |D'|]_{D, E_*})_{E_*}.
\end{align*}
By symmetry, we have the desired identity if $C'\subset |D|$ but $C\not\subset |D'|$. If $C\cup C' \subset |D|\cap |D'|$, then 
 in $\Omega_*(|D|\cap|D'|)_{E_*}$
\begin{align*}
(i_{D}^*C')([C\to |D|]_{D', E_*})_{E_*}&=i^C_*(\cn(O_Y(C'))(1_C^{E_*}))\\
&=[C\cap C'\to |D|\cap|D'|]_{E_*}\\
&=i^{C'}_*(\cn(O_Y(C))(1_{C'}^{E_*}))\\
&=(i_{D'}^*C)([C'\to |D'|]_{D, E_*})_{E_*}.
\end{align*}
Finally, if $C\not\subset |D'|$ and $C'\not\subset|D|$, then 
\begin{align*}
(i_{D}^*C')([C\to |D|]_{D', E_*})_{E_*}&=[C\cap C'\to |D|\cap|D'|]_{E_*}\\
&=(i_{D'}^*C)([C'\to |D'|]_{D, E_*})_{E_*}.
\end{align*}

In the general case, we may assume that $D_1$ is a component of $C$. Let  $C_1=D_1$, $C_0=C-C_1$. By symmetry, it suffices to induct on
$m$ and assume the result for the pairs $C_0, C'$ and $C_1, C'$. Thus
\begin{align*}
&(i_{D'}^*C_0)([C'\to|D'|]_{D, E_*})_{E_*}=(i_{D}^*C')([C_0\to|D|]_{D', E_*})_{E_*}\\
&(i_{D'}^*C_1)([C'\to|D'|]_{D, E_*})_{E_*}=(i_{D}^*C')([C_1\to|D|]_{D', E_*})_{E_*}
\end{align*}
in $\Omega_*(|D|\cap|D'|)_{E_*}$.

The divisor class $[C'\to|D'|]_{D, E_*}$ is an $\L_*$-linear combination of cobordism cycles of the form $(g:Y\to |D'|, M_1,\ldots, M_s)$
with $g(Y)\subset |D|$, or  with $g^*(C_1)$ a smooth divisor on $Y$, so we may apply lemma~\ref{lem:addition}(2) to give
\begin{multline*}
(i_{D'}^*C)([C'\to|D'|]_{D, E_*})_{E_*}=\\
(i_{D'}^*C_0)([C'\to|D'|]_{D, E_*})_{E_*}+(i_{D'}^*C_1)([C'\to|D'|]_{D, E_*})_{E_*}\\
+G_{11}(\cn(O_T(C_1)),\cn(O_T(C_0)))((i_{D'}^*C_1)([C'\to|D'|]_{D,E_*})_{E_*}.
\end{multline*}
Using our induction hypothesis, together with lemma~\ref{lem:ChernClassComp2} and lemma~\ref{lem:addition}(1), we have the identities in $\Omega_*(|D|\cap|D'|)_{E_*}$:

\begin{align*}
(i_{D'}^*C)&([C'\to|D'|]_{D, E_*})_{E_*}\\
&=(i_D^*C')([C_0\to|D|]_{D', E_*})_{E_*}+(i_D^*C')([C_1\to|D|]_{D', E_*})_{E_*}\\
&\hskip 10pt +G_{11}(\cn(O_T(C_1)),\cn(O_T(C_0)))((i_D^*C')([C_1\to|D|]_{D', E_*})_{E_*})\\
&=(i_D^*C')\big([C_0\to|D|]_{D', E_*}+[C_1\to|D|]_{D', E_*}\\
&\hskip 10pt +G_{11}(\cn(O_T(C_1)),\cn(O_T(C_0)))([C_1\to|D|]_{D', E_*})\big)_{E_*}\\
&=(i_D^*C')([C\to|D|]_{D', E_*})_{E_*},
\end{align*}
as desired.
\end{proof}

\subsection{Linear equivalent pseudo-divisors} We show how to relate
the intersection  $D_0(-)_D$ with a naive intersection $D_1\cap(-)$ for linearly equivalent pseudo-divisors $D_0$, $D_1$, in a particular situation.

\begin{proposition}\label{prop:LinearEquiv2} 
Let $W$ be in $\Sm_k$ and let $E_1,\ldots, E_r$ be pseudo-divisors on $W$. Let $f:T\to W$ be a morphism in $\Sm_k$ and let $D_0, D_1$ and $B$ be simple normal crossing divisors on $T$. We suppose that $\id_T$ is in $\sM(T)_{D_0, E_*}\cap \sM(T)_{B, E_*}$, that $D_0+B$ is a simple normal crossing divisor on $T$ and that $O_T(D_0)\cong O_T(D_1)$. In addition, suppose that $T$ has an open subscheme $V$ with $|D_1|\subset V$ such that
\addtolength{\textwidth}{-15pt}

\ \hskip5pt\begin{minipage}[c]{\textwidth}
\ \\
\hbox to0pt{\hss (a)\ }The restriction of $f$ to $V$ is a smooth morphism $f_V:V\to W$ of relative dimension one and with geometrically irreducible fibers.\\
\hbox to0pt{\hss (b)\ }There is a simple normal crossing divisor $\bar{B}$ on $W$ such that $\bar{B}$ is admissible with respect to $E_*$ and such that $B\cap V=f_V^*(\bar{B})$. \\
\hbox to0pt{\hss (c)\ }$f_V:V\to W$ admits a section $s:W\to V$ and $D_1$ is the reduced divisor $s(W)$.\\
\end{minipage}
\addtolength{\textwidth}{15pt}
\\
Let $i_j:|D_j|\cap|B|\to |B|$ be the inclusion, $i=0,1$. Then the Cartier divisor $B$ on $T$ is admissible with respect to $D_0, E_*$, the Cartier divisor $B\cap D_1$ on $D_1$ is admissible with respect to $E_*$ and
\[
i_{0*}\left(D_0([B\to |B|]_{D_0, E_*})_{E_*}\right)=i_{1*}\left([B\cap D_1\to |D_1|\cap|B|]_{E_*}\right)
\]
in $\Omega_*(|B|)_{E_*}$.
\end{proposition}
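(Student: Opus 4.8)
The plan is to rewrite both sides as a Chern class operator applied to the single divisor class $[B\to|B|]_{E_*}\in\Omega_*(|B|)_{E_*}$, and then to compare them using $O_T(D_0)\cong O_T(D_1)$.

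After a routine reduction one may assume $T$, and hence $W$, irreducible; then $|B|\ne T$, since $B$ is an effective Cartier divisor on $T$. The admissibility statements in the proposition come out as by-products of the main argument: as $D_0+B$ is a simple normal crossing divisor and $\id_T\in\sM(T)_{D_0,E_*}$, the divisor $B$ is in good position with respect to $D_0,E_*$, hence admissible with respect to $D_0,E_*$ (remark~\ref{rem:GenPosition}(1)) and, by lemma~\ref{lem:AdmissibleIntersection}, admissible with respect to $E_*$; admissibility of $B\cap D_1$ with respect to $E_*$ falls out of the application of lemma~\ref{lem:DivClassCommutativity}(1) below (via lemma~\ref{lem:GoodPosition}(2)).

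For the left-hand side I would apply lemma~\ref{lem:ClassPushforward} with $Y=T$, $D=D_0$, the given $B$ and $E_*$, and $C=D_0$ (so that $|C|\subset|D|$): using $\id_T\in\sM(T)_{D_0,E_*}\cap\sM(T)_{B,E_*}$ and that $D_0+B$ is simple normal crossing, and writing $i_B\colon|B|\to T$ for the inclusion, this gives
\[
i_{0*}\bigl(D_0([B\to|B|]_{D_0,E_*})_{E_*}\bigr)=\cn(i_B^*O_T(D_0))([B\to|B|]_{E_*})
\]
in $\Omega_*(|B|)_{E_*}$, where $D_0(-)_{E_*}$ on classes over $|B|$ means intersection with $i_B^*D_0$. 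Since $O_T(D_0)\cong O_T(D_1)$, the operators $\cn(i_B^*O_T(D_0))$ and $\cn(i_B^*O_T(D_1))$ agree. For the right-hand side I would apply lemma~\ref{lem:DivClassCommutativity}(1) with $X=Y=T$, $f=\id_T$, $D=B$, $Z=D_1$ and $C=B$; noting that $i_{D_1}^*B=B\cap D_1$ has support $|D_1|\cap|B|$ and that the map $i_{CZ}$ there is then $i_1$, this gives
\[
\cn(i_B^*O_T(D_1))([B\to|B|]_{E_*})=i_{1*}\bigl([B\cap D_1\to|D_1|\cap|B|]_{E_*}\bigr).
\]
Combining the two displays with $O_T(D_0)\cong O_T(D_1)$ yields the proposition.

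The hypotheses of lemma~\ref{lem:DivClassCommutativity}(1) that require checking are: $\id_T\in\sM(T)_{B,E_*}$ (given); $D_1$ is a smooth codimension one closed subscheme of $T$ (by (c) it is the image of the section $s$ of the smooth relative-dimension-one morphism $f_V$, hence isomorphic to $W$); $|B|\ne T$ and $C=B$ is supported in $|B|$; no component of $D_1$ is contained in $|B|$ (else some component of $W=f_V(s(W))$ would lie in $|\bar B|$, impossible for a divisor $\bar B$); and, the crucial one, $D_1$ is in good position with respect to $B,E_*$. Since $|B|\ne T$, the leading pseudo-divisor of $\id_T$ for $(B,E_1,\ldots,E_r)$ is $B$, so by Definition~\ref{Def:GoodPosition} this means exactly that $D_1+B$ is a simple normal crossing divisor on $T$. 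This I would verify on the open cover $\{T\setminus|D_1|,\,V\}$ of $T$ (which covers since $|D_1|\subset V$): on $T\setminus|D_1|$ the divisor $D_1$ is empty and $B$ is simple normal crossing; on $V$ we have $(D_1+B)|_V=s(W)+f_V^*(\bar B)$ by (b) and (c), and since $f_V$ is smooth and $\bar B$ is simple normal crossing on $W$, $f_V^*(\bar B)$ is simple normal crossing on $V$, while $s$ meets $f_V^*(\bar B)$ and all its faces transversally — in a local coordinate system on $W$ whose first $k$ coordinates $x_1,\ldots,x_k$ cut out a face of $\bar B$, with $t$ the extra fibre coordinate on $V$, the graph $s(W)=\{t=f_V^*\phi\}$ makes $t-f_V^*\phi,f_V^*x_1,\ldots,f_V^*x_k$ part of a regular system of parameters. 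I expect this last point — deducing from (a)--(c) that $D_1+B$ is a simple normal crossing divisor on $T$, i.e.\ that $D_1$ is in good position with respect to $B,E_*$ — to be the main obstacle; everything else is formal.
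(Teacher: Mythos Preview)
Your proof is correct and follows essentially the same strategy as the paper: both reduce each side to $\cn(i_B^*O_T(D_j))([B\to|B|]_{E_*})$ and then invoke $O_T(D_0)\cong O_T(D_1)$. The only difference is in how the right-hand side is handled. The paper works face by face on $B$: for each irreducible component $F$ of a face $B^J$ meeting $V$, it uses the section structure (a)--(c) to check that $D_1\cap F$ is in good position with respect to $E_*$ on $F$ (locating the leading pseudo-divisor $E_{i_0}$ for $F$ via the corresponding face of $\bar B$), then applies the $\langle\sR^{Sect}_*\rangle$ relations directly; faces disjoint from $V$ contribute zero. You instead verify the single global condition that $D_1+B$ is simple normal crossing on $T$ (i.e.\ $D_1$ is in good position with respect to $B,E_*$) and then invoke lemma~\ref{lem:DivClassCommutativity}(1) as a black box, which carries out precisely that face-by-face argument internally. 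Your verification that $D_1+B$ is SNC on the open cover $\{T\setminus|D_1|,\,V\}$ via the section/pullback description is sound, and the remaining hypotheses of lemma~\ref{lem:DivClassCommutativity}(1) are checked correctly. Your packaging is slightly cleaner, at the cost of checking a marginally stronger (but equally available) transversality condition.
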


\begin{proof} By lemma~\ref{lem:ClassPushforward}, $B$ is admissible with respect to $D_0, E_*$ and $E_*$, and
\[
i_{0*}\left(D_0([B\to |B|]_{D_0, E_*})_{E_*}\right)=\cn(O_T(D_0))([B\to |B|]_{E_*})
\]
in $\Omega_*(|B|)_{E_*}$.

On the other hand,  let $B^J$ be a face of $B$ with non-empty intersection with $V$. Then by our assumptions on $f_V$,  there is a corresponding face $\bar{B}^J$ of $\bar{B}$ such that $f_V: (B\cap V)^J\to \bar{B}^J$ is smooth with fibers of dimension one. Let $F$ be an irreducible component of $B^J$ with $F\cap V\neq\0$. Then $f(F)$ is a dense subset of an irreducible component $\bar{F}$ of $\bar{B}^J$ and  $F$ is the closure of $f_V^{-1}(\bar{F})$. If $E_{i_0}$ is the leading pseudo-divisor for $\bar{F}$ (with respect to $E_*$), then $E_{i_0}$ is also the leading pseudo-divisor for $F$. Since $F$ is admissible with respect to $E_*$, $f^*E_{i_0}\cap F$ is a simple normal crossing divisor on $F$.  As $D_1$ is equal to $s(W)$, we see that $(D_1+\Div f^*E_{i_0})\cap F$ is a simple normal crossing divisor on $F$, and thus $D_1\cap F$ is in good position on $F$ with respect to $E_*$.  If $\bar{F}$ has no leading pseudo-divisor for $E_*$, the same holds for $F$, and the smooth divisor $D_1\cap F$ on $F$ is again in good position on $F$ with respect to $E_*$. If $F'$ is an irreducible component of $B^J$ with $F'\cap V=\0$, then, as $D_1\cap F'=\0$, $D_1\cap F'$ is again in good position with respect to $E_*$. Thus the smooth divisor $D_1\cap B^J$ on $B^J$ is in good position with respect to $E_*$. 

Thus, by the relations $\<\sR^{Sect}_*\>(B^J)_{E_*}$, if $B^J$ is a face of $B$ with non-empty intersection with $V$, then 
\begin{equation}\label{eqn:*}
[B^J\cap D_1\to |B^J|]_{E_*}=\cn(O_T(D_1))(1_{B^J}^{E_*})
\end{equation}
in $\Omega_*(|B^J|)_{E_*}$. Similarly, if $B^K$ is a face of $B$ with empty intersection with $V$, then $B^K\cap D_1=\0$ and 
\[
\cn(O_T(D_1))(1_{B^K}^{E_*})=0,
\]
by applying  $\<\sR_*^{Sect}\>(B^K)_{E_*}$ in the case of an empty divisor on $B^K$.

From (b) and (c), the divisor $B\cap D_1$ on $D_1$  is admissible with respect to $E_*$. The divisor class $[B\cap D_1\to |B|\cap|D_1|]_{E_*}$ is given by a sum of the form
\[
[B\cap D_1\to |B|\cap|D_1|]_{E_*}=\sum_JF_J(\cn(L_1),\ldots)(\bar\iota^J_*(1_{B^J\cap D_1}^{E_*}))
\]
where the sum is over the faces $B^J$ of $B$ with $B^J\cap V\neq\0$,  and $\bar\iota^J:B^J\cap D_1\to |B|\cap|D_1|$ is the inclusion. We have a similar description of $[B\to |B|]_{E_*}$:
\[
[B\to |B|]_{E_*}=\sum_JF_J(\cn(L_1),\ldots)(\iota^J_*(1_{B^J}^{E_*}))+\sum_KF_K(\cn(L_1),\ldots)(\iota^K_*(1_{B^K}^{E_*}))
\]
where the first sum is over faces $B^J$ of $B$ with $B^J\cap V\neq\0$ and the second sum is over faces $B^K$ of $B$ with $B^K\cap V=\0$. 

Putting this all together  and using \eqref{eqn:*} gives us
\begin{align*}
i_{0*}\left(D_0([B\to |B|]_{D_0, E_*})_{E_*}\right)&=\cn(O_T(D_0))([B\to |B|]_{E_*})\\
&=\cn(O_T(D_1))([B\to |B|]_{E_*})\\
&=\sum_JF_J(\cn(L_1),\ldots)\cn(O_T(D_1))(\iota^J_*(1_{B^J}^{E_*}))\\
&=i_{1*}\left(\sum_JF_J(\cn(L_1),\ldots)(\bar\iota^J_*(1_{B^J\cap D_1}^{E_*}))\right)\\
&=i_{1*}([B\cap D_1\to |B|\cap|D_1|]_{E_*}).
\end{align*}
\end{proof}

\section{A moving lemma}\label{sec:MovLem} Let $E_1, \ldots, E_r, D$ be pseudo-divisors on a $k$-scheme $X$. We will show in this section that the forgetful map  
\[
\res_{E_*,D/E_*}:\Omega_*(X)_{E_*,D}\to\Omega_*(X)_{E_*}
\]
is an isomorphism and thus   $\res_{E_*/\0}:\Omega_*(X)_{E_*}\to \Omega_*(X)$ is an isomorphism. This enables us to the define the intersection map
\[
D(-):\Omega_*(X)\to \Omega_{*-1}(X)
\]
as the composition
\[
\Omega_*(X)\xrightarrow{\res_{D/\0}^{-1}} \Omega_*(X)_D\xrightarrow{D(-)}\Omega_{*-1}(X).
\]

\subsection{Blow-up diagrams}\label{subsec:BlowUp}   We discuss some properties of a generalized deformation diagram; this is a reformulation  and extension of the discussion in \cite[\S 3.2.1]{CobordismBook}, collecting most of what we will need in the following omnibus construction.

\begin{lemma}\label{lem:Blowup} Let $Y$ be in $\Sm_k$, let $Z\subset Y$ be a closed subscheme, let $\tau:Y'\to Y$ be the blowup of $Y$ along $Z$. Let $Y':=\Proj_{\sO_Y}\oplus_{n\ge0}\sI_Z^n$  and let $O(1)\to Y'$ be the tautological quotient line bundle, with zero section $s:Y'\to O(1)$. Let $\rho:T\to Y\times\A^1$ be the blowup of $Y\times\A^1$ along $Z\times0$, let $\<Y\times0\>, \<Z\times\A^1\>\subset T$ be the proper transforms of $Y\times0$, $Z\times\A^1$, respectively, and let $T^0=T\setminus\<Z\times\A^1\>$.   Let $E=\tau^{-1}(Z)\subset Y$ be the exceptional divisor of $\tau$  and let $\sE=\rho^{-1}(Z\times 0)\subset T$ be the exceptional divisor of $\rho$. Then
\\
(1) $\<Y\times0\>$ is contained in $T^0$.\\
(2)There is an isomorphism $\alpha:\<Y\times0\>\to Y'$ making the diagram
\[
\xymatrix{
\<Y\times0\>\ar[r]^-\alpha\ar[dr]_{p_1\circ\rho}&Y'\ar[d]^\tau\\&Y}
\]
commute.\\
(3) There is a morphism of $Y$-schemes $\pi:T^0\to Y'$, such that $\sE\cap T^0=\pi^{-1}(E)$ and with $\alpha=\pi_{|\<Y\times0\>}$.\\
(4) There is an isomorphism of $Y'$-schemes $\psi:T^0\to O(1)$ with   $\psi(\<Y\times0\>)=s(Y')\subset O(1)$.
\end{lemma}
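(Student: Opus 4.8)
The plan is to reduce everything to explicit computations in affine charts of two $\Proj$'s. All four assertions are Zariski-local on $Y$, and both sides of every identity respect the decomposition of $Y$ into connected components, so I may assume $Y$ irreducible; discarding the degenerate case $Z=Y$ (where $Y'=\emptyset$, $T^0=\emptyset$, and all claims are vacuous), I fix an affine open $U=\Spec A\subset Y$ with $A$ a domain and $\sI_Z|_U=(f_1,\dots,f_n)$, each $f_i\neq 0$. Over $U$ one has $Y'=\Proj_A\bigl(\oplus_m\sI_Z^m\bigr)$ presented by $X_i\mapsto f_i$, and $T=\Proj_{A[t]}\bigl(\oplus_m(\sI_Z+(t))^m\bigr)$ presented by $X_i\mapsto f_i$, $S\mapsto t$, where $t$ is the coordinate on $\A^1$. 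The first step is the chart computation: in the chart $\{S\neq 0\}$ of $T$ the relations $f_iS=tX_i$ give $f_i=tx_i$ with $x_i:=X_i/S$, so $\sE\cap\{S\neq 0\}=\{t=0\}$; and in the chart $\{X_i\neq 0\}$, localizing the two graded rings at $X_i\mapsto f_i$ identifies $\bigl(\oplus_m(\sI_Z+(t))^m\bigr)_{(X_i)}$ with the polynomial extension $\bigl(\oplus_m\sI_Z^m\bigr)_{(X_i)}[u_i]$, where $u_i:=S/X_i$; thus $T\cap\{X_i\neq 0\}\cong(Y'\cap\{X_i\neq 0\})\times\A^1$, with $t=f_iu_i$ and $\sE\cap\{X_i\neq 0\}=\{f_i=0\}$.

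Next I would identify the proper transforms. Using that $\rho$ is an isomorphism away from $Z\times 0$ and that $\overline{C}\cap W=\overline{C\cap W}$ (closure in $W$) for $W$ open: in $\{S\neq 0\}$ the preimage of $(Y\setminus Z)\times 0$ is empty, since there $f_i=tx_i$ vanishes whenever $t=0$; hence $\langle Y\times 0\rangle$ misses $\{S\neq 0\}$. Dually, in each $\{X_i\neq 0\}$ the preimage of $(Z\times\A^1)\setminus(Z\times 0)$ is empty, since $f_i=0$ there forces $t=f_iu_i=0$; hence $\langle Z\times\A^1\rangle$ misses every $\{X_i\neq 0\}$. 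Therefore $\langle Y\times 0\rangle\subset\bigcup_i\{X_i\neq 0\}$ with $\langle Y\times 0\rangle\cap\{X_i\neq 0\}=\{u_i=0\}$, and $\langle Z\times\A^1\rangle\subset\{S\neq 0\}$ with $\langle Z\times\A^1\rangle\cap\{S\neq 0\}=\{x_1=\dots=x_n=0\}$. In particular the two proper transforms are disjoint, proving (1), and $T^0=T\setminus\langle Z\times\A^1\rangle=\bigcup_i\{X_i\neq 0\}$, which the chart description exhibits as an $\A^1$-bundle over $Y'$ with the loci $\{u_i=0\}$ as its zero section.

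For (3) I would produce $\pi$ from the universal property of the blow-up rather than by gluing: from $t=f_iu_i$ we get $(t)\cdot\sO_{T^0}\subset\sI_Z\cdot\sO_{T^0}$, so $\sI_Z\cdot\sO_{T^0}=\sI_{Z\times 0}\cdot\sO_{T^0}=\sO_{T^0}(-\sE)$ is invertible; since $T^0\to Y$ then pulls $\sI_Z$ back to an invertible ideal sheaf, it factors uniquely through a $Y$-morphism $\pi:T^0\to Y'$, which on each $\{X_i\neq 0\}$ is visibly the projection forgetting $u_i$. Then $\pi^{-1}(E)=V(\sI_Z\cdot\sO_{T^0})=V(\sI_{Z\times 0}\cdot\sO_{T^0})=\sE\cap T^0$. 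Restricting $\pi$ to $\langle Y\times 0\rangle\subset T^0$ carries $\{u_i=0\}$ isomorphically onto $Y'\cap\{X_i\neq 0\}$ for each $i$, so $\alpha:=\pi|_{\langle Y\times 0\rangle}$ is an isomorphism $\langle Y\times 0\rangle\xrightarrow{\sim}Y'$; it is a $Y$-morphism because $\pi$ is, which gives (2) together with its commuting triangle, and also completes (3).

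Finally, for (4): on the overlap $\{X_i\neq 0\}\cap\{X_j\neq 0\}$, the identities $u_iX_i=S=u_jX_j$ yield $u_i=(X_j/X_i)u_j$, which is precisely the transition law for the fibre coordinate of the total space of $O(1)=\sO_{Y'}(1)=\sO_{Y'}(-E)$ in its tautological trivialisations by the $X_i$; hence the $u_i$ glue to an isomorphism of $Y'$-schemes $\psi:T^0\xrightarrow{\sim}O(1)$, and $\psi$ carries $\langle Y\times 0\rangle=\{u_i=0\}$ onto the zero section $s(Y')$. I expect the only real obstacle to be bookkeeping: checking carefully that the localization of the Rees algebra of $\sI_Z+(t)$ at $X_i$ is the asserted polynomial ring, that the proper transforms are exactly the subschemes written above, and that the sign convention makes $T^0$ the total space of $\sO_{Y'}(1)$ and not of its dual.
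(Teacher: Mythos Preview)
Your argument is correct and complete; the chart identifications, the proper--transform computations, and the transition--function check for $O(1)$ all go through as you describe. The route, however, differs from the paper's in emphasis. The paper works globally with the sheaves of graded $\sO_Y$--algebras $\sB_\bullet=\oplus_n\sI_Z^n$ and $\sC_\bullet=\oplus_n(\sI_Z[t]+(t))^n$: it reads off (1) and (2) from the fact that the degree--one ideals $(t_{(1)})$ and $\sI_Z[t]_{(1)}$ together generate $\sC_\bullet^+$, obtains $\pi$ directly from the inclusion of graded algebras $\sB_\bullet\hookrightarrow\sC_\bullet$, and builds $\psi$ from the graded $\sB_\bullet$--algebra map $\sB_\bullet[x]\to\sC_\bullet$, $x\mapsto t_{(1)}$, only descending to affine charts at the very end to write down the inverse. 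Your version instead immediately passes to standard charts $\{X_i\neq0\}$, $\{S\neq0\}$ and does all the work there, and you get $\pi$ from the universal property of the blow--up rather than from an explicit ring map. The paper's approach makes the naturality in $Y$ and the compatibility $\alpha=\pi|_{\langle Y\times0\rangle}$ essentially tautological (both come from the same quotient $\sC_\bullet\to\sC_\bullet/(t_{(1)})$), while your approach makes the line--bundle identification in (4) more transparent, since the transition law $u_i=(X_j/X_i)u_j$ drops out of the chart description without any further computation. Either way the affine--chart verification is unavoidable; you simply front--load it.
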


\begin{proof}  Denote the sheaf of graded $\sO_Y$-algebras $\oplus_{n\ge0}\sI_Z^n$ by $\sB_\bullet$. Let $p:Y\times\A^1\to Y$ be the projection. For a sheaf $\sF$ on $Y$ we write $\sF[t]$ for the sheaf $p^*\sF$ on $Y\times\A^1$ and we identify $Y\times\A^1$ with $\Spec_{\sO_Y}p_*\sO_Y[t]$. Let $\sC_\bullet$ be the graded sheaf of $\sO_Y[t]$-algebras $\oplus_{n\ge0}(\sI_Z[t]+(t)\sO_Y[t])^n$; by definition, $T$ is the $Y\times\A^1$-scheme $\Proj_{\sO_Y[t]}\sC_\bullet$ and $Y'=\Proj_{\sO}\sB_\bullet$.

The proper transforms $\<Y\times0\>$ and $\<Z\times\A^1\>$ are defined by the homogeneous ideal sheaves in $\sC_\bullet$ generated in degree one by $(t)$ and $\sI_Z[t]$, respectively. As $\sI_Z[t]$ and $t$ generate $\sC_\bullet^+$ as a sheaf of ideals in $\sC_\bullet$, it follows that $\<Y\times0\>\cap\<Z\times\A^1\>=\0$ and thus $\<Y\times0\>\subset T^0$, proving (1). Writing $t_{(1)}$ for $t\in \sC_1$,  the quotient algebra $\sC_\bullet/(t_{(1)})$ is $\sO_Y[t]\oplus \oplus_{n\ge1}\sI_Z^n$, which is  isomorphic to $\sB_\bullet$ up to a bounded algebra, hence $p_1\circ\rho:\<Y\times\A^1\>\to Y$ is isomorphic  to $\tau:Y'\to Y$ as a $Y$-scheme, giving us the isomorphism $\alpha:\<Y\times\A^1\>\to Y'$ in (2). 

For (3), we have the evident inclusion of sheaves of graded $\sO_Y$-algebras $\sB_\bullet \to \sC_\bullet$. Let $Q_\bullet\subset \sC_\bullet$ be a sheaf of homogeneous prime ideals. Then  $Q_{\bullet}\cap \sB_{\bullet}\supset \sB_{\bullet}^+$  if and only if $Q_{1}\supset \sI_{Z}$ if and only if the corresponding  point $[Q_\bullet]$ of $T$ lies in $\<Z\times\A^1\>$. Thus the inclusion $\sB_\bullet \to \sC_\bullet$ induces a well-defined morphism of $Y$-schemes $\pi:T^0\to Y'$.  The closed subscheme $E\subset Y'$ is defined by the homogenous ideal sheaf in $\sB_\bullet$ generated by $\sI_Z$ (in degree 0), and $\sE\subset T$ is similarly defined by the homogeneous ideal sheaf in $\sC_\bullet$ generated by $\sI_Z[t]+(t)\sO_Y[t]$ in degree 0. But if we restrict to a principal open subset $U_f$  of $T$ formed by inverting a section $f$ of $\sI_Z\subset \sC_1$ over some open subset $V$ of $Y$, the ideal sheaves on $U_f$ corresponding to $\sI_Z\sC_\bullet$ and $(\sI_Z, t)\sC_\bullet$ agree, since $t_{(1)}/f_{(1)}$ is a regular function on $U_f$ and $t=f\cdot(t_{(1)}/f_{(1)})$. Thus $\sE\cap T^0$ is defined by the ideal sheaf $\sI_Z\sC_\bullet$, which shows that $\pi^{-1}(E)=\sE\cap T^0$. The composition $\sB_\bullet \to \sC_\bullet\to  \sC_\bullet/(t_{(1)})$ is the map used to defined $\alpha:\<Y\times\A^1\>\to Y'$, finishing the proof of (3).

The line bundle $O(1)\to Y'$ is the affine $Y'$-scheme $\Spec_{\sO_{Y'}}\Sym^*\sO(-1)$ and, as $\sO(-1)$ is an invertible sheaf, $\Sym^*\sO(-1)=\oplus_{m\ge0}\sO(-m)$. The invertible sheaf $\sO(-m)$ is the invertible sheaf associated to the graded $\sB_\bullet$-module $\sB_\bullet[-m]$,  $B_\bullet[-m]_p=\sB_{p-m}$, and so $\Sym^*\sO(-1)$ is the sheaf of algebras associated to the polynomial algebra  $\sB_\bullet[x]$ over $\sB_\bullet$ with generator $x$ in degree 1. The image of the zero section in $O(1)$ is defined by the ideal $(x)\sB_\bullet[x]\subset \sB_\bullet[x]$. 

For the proof of (4), we have the map of graded $\sB_\bullet$-algebras $\psi^*:\sB_\bullet[x]\to\sC_\bullet$, which sends $\sB_nx^m=\sI_Z^nx^m$ to $\sC_{m+n}=(\sI_Z[t]+t\sO_Y[t])^{m+n}$ by setting $\psi^*(a x^m)=at^m$, where $a$ is a section of $\sI_Z^n$. 

We claim that $\psi^*$ defines an isomorphism of $Y'$-schemes $\psi:T^0\to O(1)$ and sends $\<Y\times0\>$ to $s(Y')\subset O(1)$.  To show this, it suffices to  handle the case of affine $Y$, $Y=\Spec A$,  and to show in addition that the morphism $\psi$ we define is natural in $A$. 

Let $Z\subset Y$ be defined by an ideal $I\subset A$ and let $B_\bullet=\oplus_{n\ge0}I^n$, $C_\bullet=\oplus_{n\ge0}(I[t]+tA[t])^n$. For $a\in I^m$, we denote the corresponding element of $B_m$ by $a_{(m)}$. For $f\in I$, we have the principal open subschemes $V_f\subset Y'$ defined by $f_{(1)}\in B_1$ and  $U_f\subset T$ defined by $f_{(1)}\in C_1$. $Y'$ is covered by the $V_f$,  $T^0$ is covered by the $U_f$ and $\pi:T^0\to Y'$ restricts to $\pi_f:U_f\to V_f$. 

By definition 
\begin{align*}
&V_f=\Spec B_\bullet[1/f_{(1)}]_0\\
&O(1)_{|V_f}=\Spec  B_\bullet[1/f_{(1)}]_{0}[x/f_{(1)}]\\
&U_f=\Spec C_\bullet[1/f_{(1)}]_0,
\end{align*}
with projection $O(1)_{|V_f}\to V_f$ given by the inclusion 
\[
B_\bullet[1/f_{(1)}]_0\to  B_\bullet[1/f_{(1)}]_0[x/f_{(1)}]. 
\]
The map $\psi^*$ gives rise to the homomorphism of $B_\bullet[1/f_{(1)}]_0$-algebras
\[
\psi^*_f:B_\bullet[1/f_{(1)}]_0[x/f_{(1)}]\to C_\bullet[1/f_{(1)}]_0
\]
with $\psi^*_f(x/f_{(1)})=t_{(1)}/f_{(1)}$. If $g$ is another element of $I$, the element $x/f_{(1)}$ maps to $g\cdot [x/(fg)_{(1)}]$ under restriction map  for $O(1)_{|V_{fg}}\subset O(1)_{|V_f}$. As $g\cdot [t_{(1)}/(fg)_{(1)}]=t_{(1)}/f_{(1)}$, the
maps $\psi^*_f$ and $\psi^*_{fg}$ are compatible with the respective restriction maps for $V_{fg}\subset V_f$ and $U_{fg}\subset U_f$, so the family $(\psi_f^*)_{f\in I}$ gives a well-defined morphism of $Y'$-schemes $\psi:T^0\to O(1)$. Clearly $\psi$ is natural in $A$.  The inverse to $\psi^*_f$ is given by sending $t^ia_{(m)}t_{(1)}^{n-m}/f_{(1)}^n\in (I[t]+tA[t])^n/f_{(1)}^n$, $a\in I^m$, $i\ge0$, to $f^i\cdot (a_{(m)}/f_{(1)}^m)\cdot (x/f_{(1)})^{n-m+i}$ (use the relation $t=f\cdot(t_{(1)}/f_{(1)})$). Thus $\psi$ is an isomorphism. As $\<Y\times0\>\cap U_f$ is defined by the ideal $(t_{(1)}/f_{(1)})$ and the zero section $s(V_f)$ in $O(1)_{|V_f}$ is defined by the ideal $(x/f_{(1)})$, $\psi$ restricts to an isomorphism of $\<Y\times0\>$ with $s(Y')$. 
\end{proof}

\subsection{Distinguished liftings}\label{SubSec:DistinguishedLifting} Given a finite type
$k$-scheme $X$ with  pseudo-divisors $E_1, \ldots, E_r,D$, we describe method for lifting elements of
$\sZ_*(X)_{E_*}$ to $\Omega_*(X)_{E_*, D}$. 

\begin{lemma}\label{lem:DistinguishedLifting} Let $Y$ be in $\Sm_k$ and
 let $E_1,\ldots, E_r, E_{r+1}$ be effective (non-zero) divisors on $Y$; we allow the case $r=0$, that is, we have only the divisor $E_{r+1}$. We let $E_*$ denote the sequence $E_1,\ldots, E_r$. Suppose that  $\id_Y$ is in $\sM(Y)_{E_*}$. Then\\
 (A)  there is a projective birational morphism
$\rho:W\to Y\times\P^1$, with $W\in\Sm_k$,  such that,  letting $\sE$ be the exceptional divisor of $\rho$,   letting $\<Y\times0\>$  denote  the proper transform to $W$ of $Y\times0$,  and letting $\hat\tau:\<Y\times0\>\to Y$ be the restriction of $p_1\circ\rho$, 
we have
\begin{equation}\label{eqn:DistingLiftingConditions}
\end{equation}
\begin{enumerate}
\item The fundamental locus of $\rho$ is contained in $|E_1|\times0$.
\item $\<Y\times0\>$ is smooth, the morphism $\hat{\tau}:\<Y\times0\>\to Y$ is birational, with fundamental locus contained in $|E_1|\cap |E_{r+1}|$, and $\hat{\tau}$ is in $\sM(Y)_{E_*, E_{r+1}}$.
\item  The morphism $p_1\circ\rho:W\to Y$ is in $\sM(Y)_{E_*}$ and $\rho^*(Y\times0)$ is in good position with respect to $E_*$.
\item  The morphism $\rho:W\to Y\times \P^1$ is in $\sM(Y\times\P^1)_{Y\times0, E_*, E_{r+1}}$.
\end{enumerate}
(B) If $\tau:Y'\to Y$ is a projective birational morphism with $Y'\in\Sm/k$, with fundamental locus contained in $|E_1|\cap|E_{r+1}|$ and with $\tau$ in $\sM(Y)_{E_*, E_{r+1}}$, then there is a $\rho$ as above, satisfying (1)-(4),   with $\hat\tau:\<Y\times0\>\to Y$ isomorphic to the $Y$-scheme $\tau:Y'\to Y$, such that 
\begin{enumerate}
\item[(5)] $\<Y\times0\>\cap \<E_1\times\P^1\>=\0$,
\end{enumerate} 
where $\<|E_1|\times\P^1\>$ is the proper transform to $W$ of $|E_1|\times\P^1$.
\\
(C) Let $A\subset Y$ be an effective Cartier divisor, in good position with respect to $E_*$, and suppose that $\tau:Y'\to Y$ is a morphism satisfying the hypotheses in (B) such that $\tau^*A$ is in good position with respect to $E_*, E_{r+1}$. Then there is a $\rho:W\to Y\times\P^1$ satisfying (1)-(5) and with 
$\hat\tau:\<Y\times0\>\to Y$ isomorphic to   $Y'\to Y$ as $Y$-schemes, such that $\rho^*(A\times\P^1)$ is in good position with respect to $Y\times0, E_*, E_{r+1}$ and  $\rho^*(A\times\P^1+Y\times0)$ is in good position with respect to $E_*$.
 \end{lemma}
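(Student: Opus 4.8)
\emph{Reduction.} The plan is to build, for all three parts, one generalized deformation diagram and to read off the assertions from its properties. Part (B) is the special case $A=\emptyset$ of (C) (the empty divisor is in good position with respect to every sequence), and (A) follows from (B) once one produces, by resolution of singularities, a $\tau:Y'\to Y$ satisfying the hypotheses of (B) --- a projective birational morphism with fundamental locus contained in $|E_1|\cap|E_{r+1}|$ and with $\tau\in\sM(Y)_{E_*,E_{r+1}}$ (since each $E_i$ is a nonzero effective Cartier divisor on the irreducible $Y$, the leading pseudo-divisor for $\id_Y$ with respect to $E_*$ is $E_1$, and one resolves so that the divisors $\bigcap_{i=1}^s\Div\tau^*E_i$ acquire, for all $s\le r+1$, the nested structure demanded by Definition~\ref{def:AdmissibleMaps}). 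So I concentrate on (C).

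\emph{The deformation diagram.} Given $\tau:Y'\to Y$ as in (C), realise it as a blowup $Y'=\Bl_ZY=\Proj_{\sO_Y}\bigoplus_{n\ge0}\sI_Z^n$ with $Z$ a closed subscheme whose support is the fundamental locus of $\tau$, hence contained in $|E_1|\cap|E_{r+1}|$. Let $\rho_0:T_0\to Y\times\P^1$ be the blowup of $Y\times\P^1$ along $Z\times0$, with exceptional divisor $\sE_0$, and apply Lemma~\ref{lem:Blowup} to $Y\times\A^1\subset Y\times\P^1$: the proper transform $\<Y\times0\>$ is disjoint from $\<Z\times\A^1\>$, is isomorphic over $Y$ to $\tau:Y'\to Y$, and a neighbourhood of it in $T_0$ is identified with (an open part of) the total space of $O(1)\to Y'$, in which $\<Y\times0\>$ is the zero-section and $\sE_0$ is the pullback of the exceptional divisor $\tau^{-1}(Z)$ of $\tau$. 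In particular $T_0$ is already smooth near $\<Y\times0\>$, and --- using Lemmas~\ref{lem:AdmissibleSupport}, \ref{lem:AdmissibleIntersection} and \ref{lem:GoodPosition} together with this line-bundle picture --- the hypotheses that $\tau\in\sM(Y)_{E_*,E_{r+1}}$ and that $A$, respectively $\tau^*A$, is in good position with respect to $E_*$, respectively $E_*,E_{r+1}$, say exactly that in a suitable neighbourhood $U$ of $\<Y\times0\>$ the total transform to $T_0$ of the divisor $(Y\times0)+(Y\times\infty)+\sum_{i=1}^{r}(\Div E_i\times\P^1)+(\Div E_{r+1}\times\P^1)+(A\times\P^1)$, together with $\sE_0$, is a simple normal crossing divisor exhibiting the nestings required for admissibility.

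\emph{Passage to $W$.} I would then pass to $\rho:W\to Y\times\P^1$ in two steps, both performed away from $U$. First, a blowup with centre the Cartier divisor $\<\Div E_1\times\P^1\>\cap\<Y\times0\>$ of $\<Y\times0\>\cong Y'$ (Cartier because, by admissibility of $\tau$, it is simple normal crossing on $Y'$): blowing up a Cartier divisor of $\<Y\times0\>$ inside the smooth ambient scheme leaves $\<Y\times0\>$ and its map to $Y$ unchanged while separating $\<Y\times0\>$ from $\<\Div E_1\times\P^1\>$, yielding (5). Second, apply resolution of singularities relative to $U$: this produces $\rho:W\to Y\times\P^1$ with $W\in\Sm_k$, an isomorphism over $U$ --- so $\hat\tau:\<Y\times0\>\to Y$ remains isomorphic to $\tau:Y'\to Y$, giving (2) --- and makes the total transform of the big divisor above, together with the full exceptional locus, a simple normal crossing divisor on $W$. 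Conditions (1), (3), (4) and the good-position assertions of (C) then follow by unwinding: all fundamental loci lie over $|E_1|\times0$; the iterated scheme-theoretic intersections occurring in the admissibility of $p_1\rho$ (with respect to $E_*$) and of $\rho$ (with respect to $Y\times0,E_*,E_{r+1}$) are, by construction, sub-divisors of the simple normal crossing divisors $\Div(p_1\rho)^*E_1$, respectively $\Div\rho^*(Y\times0)$, hence Cartier or empty; and $\Div\rho^*(Y\times0)$ and $\Div\rho^*(A\times\P^1)$ are summands of a simple normal crossing divisor on $W$ that is in good position.

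\emph{Main obstacle.} The crux is the verification flagged above: that the hypotheses on $\tau$ and on $A$ genuinely force the relevant total transform to be simple normal crossing with the correct nestings in a neighbourhood of $\<Y\times0\>$, since it is precisely this that lets resolution be taken as an isomorphism there and hence preserves the prescribed modification $\<Y\times0\>\cong Y'$. The accompanying work --- realising $\tau$ as a blowup, choosing the centres of the two steps, and extending everything to an admissible, good-position configuration on all of $W$ away from $U$ by further blowups, as well as producing the $\tau$ of (A) --- is routine but intricate bookkeeping built on Lemmas~\ref{lem:AdmissibleSupport}, \ref{lem:AdmissibleIntersection}, \ref{lem:GoodPosition} and the blowup description of Lemma~\ref{lem:Blowup}.
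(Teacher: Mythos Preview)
Your overall architecture---realise $\tau$ as a blowup, form the deformation $T_0=\Bl_{Z\times0}(Y\times\P^1)$, read off the local structure near $\<Y\times0\>$ from Lemma~\ref{lem:Blowup}, then resolve singularities away from the good locus---matches the paper. The reductions (C)$\Rightarrow$(B)$\Rightarrow$(A) are fine.

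The substantive difference is in how you obtain condition (5). The paper does \emph{not} introduce a separate blowup to disjoin $\<Y\times0\>$ from $\<|E_1|\times\P^1\>$. Instead, having written $\tau$ as the blowup of $Y$ along a subscheme $Z_0$ with $|Z_0|\subset|E_1|\cap|E_{r+1}|$, it replaces $Z_0$ by the subscheme $Z$ with ideal $\sI_Z=\sI_{Z_0}\cdot\sI_{E_1}$. Since $\sI_{E_1}$ is invertible, $\Bl_ZY\cong\Bl_{Z_0}Y=Y'$, but now $|Z|=|E_1|$, so that $\<Z\times\P^1\>=\<|E_1|\times\P^1\>$ and Lemma~\ref{lem:Blowup}(1) gives $\<Y\times0\>\cap\<|E_1|\times\P^1\>=\emptyset$ immediately in $W_1$. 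This trick also guarantees at once that $(\<Y\times0\>+\sE_1+\rho_1^*(E_1\times\P^1))\cap U_1$ is a simple normal crossing divisor on the smooth open $U_1$, so the subsequent resolution $\psi:W\to W_1$ can be taken with fundamental locus contained in $F_1=|\sE_1|\cap|\<|E_1|\times\P^1\>|$, which is disjoint from $\<Y\times0\>$.

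Your alternative---a separation blowup along $\<E_1\times\P^1\>\cap\<Y\times0\>$---is not quite right as stated. You assert that both further steps are ``performed away from $U$'', but the centre of your first step lies in $\<Y\times0\>\subset U$, so this is false. It is true that blowing up along a Cartier divisor of $\<Y\times0\>$ leaves the proper transform of $\<Y\times0\>$ unchanged, but it does modify $U$, and you would then need to re-establish the simple normal crossing and admissibility properties on the new open neighbourhood before invoking relative resolution in your second step. You also need the centre to be smooth (not merely Cartier on $\<Y\times0\>$) and to identify it correctly: what meets $\<Y\times0\>\cong Y'$ is the proper transform of $E_1$, not $\Div\tau^*E_1$, and these differ by exceptional components. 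None of this is fatal, but the paper's ideal-thickening device sidesteps all of it in one line.
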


\begin{proof} We may assume that $Y$ is irreducible. We begin the proof of (A) by showing that there exists a morphism $\tau:Y'\to Y$ satisfying the hypotheses in (B). We first suppose $r>0$. 

Let $\tilde\tau:\tilde{Y}\to Y$ be the blowup of $Y$ along $E_{(r+1)}:=\cap_{i=0}^{r+1}E_i$. As $|E_{(r+1)}|\subset |E_1|\cap|E_{r+1}|$, we see that the fundamental locus $F$ of $\tilde\tau$ is contained in $|E_1|\cap|E_{r+1}|$. Let $\tilde E$ be the exceptional divisor of $\tilde\tau$. Since $\id_Y$ is in $\sM(Y)_{E_*}$, it follows that $\tilde{Y}\setminus\tilde{E}\to Y\setminus F$ is in $\sM(Y\setminus F)_{E_*}$, in particular,  $\tilde\tau^*E_1\setminus \tilde E$ is a simple normal crossing divisor on $\tilde Y\setminus\tilde E$.

By resolution of singularities, there is a projective birational morphism $\phi:Y'\to \tilde Y$ with $Y'\in \Sm/k$ and with fundamental locus contained in $|\tilde{E}|$, such that $\phi^*(\tilde\tau^*E_1)$ is a simple normal crossing divisor on $Y'$. Letting $\tau:Y'\to Y$ be the composition $\tilde\tau\circ\phi$,  it follows immediately that $\tau$ has the desired properties. We note that in particular, the exceptional divisor $E'$ of $\tau$ is supported in $|\tau^*E_1|$, and   $\tau^*E_1$ is a simple normal crossing divisor on $Y'$. 

If $r=0$, we simply take $\tau:Y'\to Y$ to be a blowup of $Y$ along a closed subscheme of $Y$ supported in $|E_1|$ so that $Y'$ is in $\Sm_k$ and $\tau^*E_1$ is simple normal crossing divisor on $Y'$. 

Having shown the existence of a projective birational morphism $\tau:Y'\to Y$ with fundamental locus contained in $|E_1|\cap |E_{r+1}|$, such that $\tau$ is in  $\sM(Y)_{E_*, E_{r+1}}$, we choose any such morphism and proceed with the construction of $\rho:W\to Y\times \P^1$. 

Since $\tau$ is  a projective birational morphism and $Y$ is smooth, there is a closed subschem $Z_0$ with support equal to the fundamental locus of $\tau$ (which is  contained in $|E_1|\cap|E_{r+1}|$) such that $\tau$ is the blow-up of $Y$ along $Z_0$. Let $Z$ be the closed subscheme of $Y$ with ideal sheaf $\sI_Z=\sI_{Z_0}\cdot\sI_{E_1}$. Then $|Z|=|E_1|$ and since $\sI_{E_1}$ is a locally principal ideal sheaf, $\tau$ is also isomorphic to the blowup of $Y$ along $Z$.  Let $q:O(1)\to Y'$ be the tautological quotient line bundle on $Y'$ corresponding to this identification.

Let $\rho_1:W_1\to Y\times\P^1$ be the blowup of $Y\times\P^1$ along $Z\times0$, let $\<|E_1|\times \P^1\>_1$ and $\<Y\times 0\>_1$ be the proper transforms of $|E_1|\times \P^1$ and $Y\times 0$ to $W_1$, respectively, and let $\sE_1\subset W_1$ be the exceptional divisor of $\rho_1$. Let $W_1^0=\rho_1^{-1}(Y\times\A^1)\cap (W_1\setminus \<|E_1|\times \P^1\>_1)$, $F_1=|\sE_1|\cap \<|E_1|\times \P^1\>_1$ and $U_1=W_1\setminus F_1\supset W_1^0$. By lemma~\ref{lem:Blowup} we have
\addtolength{\textwidth}{-15pt}

\ \hskip5pt\begin{minipage}[c]{\textwidth}
\ \\
\hbox to0pt{\hss (a)\ }$\<Y\times 0\>_1\subset W_1^0$.\\
\hbox to0pt{\hss (b)\ }$p_1\circ\rho_1:\<Y\times 0\>_1\to Y$ and $\tau:Y'\to Y$ are  isomorphic  as  $Y$-schemes.\\
\hbox to0pt{\hss (c)\ }There is a $Y$-morphism $\pi:W_1^0\to Y'$ and an isomorphism of $Y'$-schemes $\psi:W_1^0\to O(1)$, with $\psi$ sending 
$\<Y\times 0\>_1$ to the zero section of $O(1)$ and sending $\sE_1\cap W_1^0$ isomorphically onto $(\tau\circ q)^{-1}(Z)$.
\end{minipage}
\addtolength{\textwidth}{15pt}
\\
In particular, $U_1$ is smooth, $\<Y\times 0\>_1\subset U_1$,  $(\<Y\times 0\>_1+\sE_1+\rho_1^*(E_1\times\P^1))\cap U_1$ is a simple normal crossing divisor on $U_1$ and $\rho_1^{-1}(Y\times0\cap\cap_{i=1}^{r+1}E_i\times\P^1)\cap W_1^0$ is a Cartier divisor on $W_1^0$.

We claim that $\id_{U_1}$ is in $\sM(U_1)_{E_*}\cap \sM(U_1)_{Y\times 0, E_*, E_{r+1}}$. Indeed, let $U_1':=U_1\setminus |\rho_1^*(Y\times0)|$. Then $U_1'=Y\times(\P^1\setminus 0)$ as a $Y$-scheme and $Y\times0$ pulls back to the empty divisor on $U_1'$, so $\id_{U_1'}$ is in $\sM(U'_1)_{E_*}\cap \sM(U'_1)_{Y\times 0, E_*, E_{r+1}}$. Similarly, $\id_{W_1^0}$ is in $\sM(W^0_1)_{E_*}\cap  \sM(W^0_1)_{Y\times 0,  E_*, E_{r+1}}$ by properties (a)-(c), and as $U_1=W_1^0\cup U_1'$, our claim is verified.

By resolution of singularities, there is a projective birational morphism $\psi:W\to W_1$ with $W\in \Sm/k$ such that $\psi$ has fundamental locus contained in $F_1$,  $\psi^*(\sE_1+\<Y\times 0\>_1+\rho_1^*(E_1\times\P^1))$  is a simple normal crossing divisor on $W$ and $\id_W$ is in $\sM(W)_{E_*}\cap  \sM(W)_{Y\times 0, E_*, E_{r+1}}$. Indeed,  we may proceed as in the construction of $Y'$. First blow up the  subscheme
$\rho_1^{-1}(Y\times0\cap \cap_{i=1}^{r+1}E_i\times\P^1)$ of $W_1$, forming $\psi_1:W_2\to W_1$. Since $\rho_1^{-1}(Y\times0\cap \cap_{i=1}^{r+1}E_i\times\P^1)\cap U_1$ is a Cartier divisor on $U_1$, the fundamental locus of $\psi_1$ is contained in $F_1$.  Next, blow up $W_2$ along a closed subscheme lying over $F_1$, $\psi_2:W\to W_2$, so that $W$ is in $\Sm_k$ and, letting $\psi:=\psi_1\circ\psi_2$,  $\psi^*(\sE_1+\<Y\times 0\>_1+\rho_1^*(E_1\times\P^1))$ a simple normal crossing divisor. Letting $\rho:W\to Y\times\P^1$ be the composition $\rho_1\circ\psi$, we claim that $\rho$ satisfies our conditions (1)-(5). 

Property (1) is a direct consequence of our choice of $Z$.   Property (2) follows from (b).
Properties (3) and (4) are verified in the previous paragraph; to see that  $\rho^*(Y\times0)$ is in good position with respect to $E_*$, we  note that $\rho^*(Y\times0)+\rho^*(E_1\times\P^1)=\<Y\times 0\>+\sE+\rho^*(E_1\times\P^1)$, which is a simple normal crossing divisor on $W$. We have constructed $W$ starting with an arbitrary projective birational morphism $\tau:Y'\to Y$ with $Y'\in\Sm/k$, with fundamental locus contained in $|E_1|\cap |E_{r+1}|$ and with $\tau$ in $\sM(Y)_{E_*, E_{r+1}}$, and from (b), $\<Y\times0\>$ and $Y'$ are isomorphic $Y$-schemes. 

Finally, we see from (a) that the $\rho$ we have constructed satisfies (5). This completes the proof of (A) and (B).

For (C), we note that $\rho_1^*(A\times\P^1)\cap W_1^0=\pi^*(\tau^*(A))$, and $|\rho_1^*(Y\times0)\cap W_1^0|=\<Y\times0\>\cup|\pi^*(\tau^*E_1)|$. Since $\<Y\times0\>$ goes over to the zero-section of $O(1)\to Y'$, and $\tau^*(A+E_1)$ is a simple normal crossing divisor on $Y'$,  it follows  that $\rho_1^*(A\times\P^1+Y\times0+E_1\times \P^1)\cap W_1^0$ is a simple normal crossing divisors on $W_1^0$. Since $A$ is in good position with respect to $E_*$, we see that  $\rho_1^*(A\times\P^1+E_1\times \P^1)\setminus |\sE_1|$ is a simple normal crossing divisor on $W_1\setminus |\sE_1|$, and thus  $\rho_1^*(A\times\P^1+Y\times0+E_1\times \P^1)\cap U_1$ is a simple normal crossing divisor on $U_1$. Taking $\psi:W\to W_1$ as above, we may blow up $W$ further in a closed subscheme lying over $F_1$ and change notation so that $\psi^*\rho_1^*(A\times\P^1+Y\times0+E_1\times \P^1)$ is a simple normal crossing divisor on $W$, and all the properties of $\rho:W\to Y\times\P^1$ listed in (A) and (B) still hold. This proves (C).
\end{proof}

\begin{definition}\label{Def:DistinguishedLifting} Let $X$ be in $\Sch_k$ with pseudo-divisors $D_1,\ldots, D_n$, $D$.\\ 
(1) Let $f:Y\to X$ be in $\sM(X)_{D_*}$ with $Y$ irreducible.  Let $E_1,\ldots, E_r$ be the sequence of Cartier divisors  $\Div f^*D_{i_1},\ldots, \Div f^*D_{i_r}$ on $Y$, where $1\le i_1<\ldots<i_r\le n$ and $\{D_{i_1},\ldots, D_{i_r}\}$ is the set of pseudo-divisors $D_i$ such that $f(Y)\not\subset |D_i|$. If $f(Y)\not\subset |D|$, let $E_{r+1}=\Div f^*D$, if $f(Y)\subset |D|$, let $E_{r+1}=E_r$; in this latter case, if $r=0$, we take $E_1,\ldots, E_r, E_{r+1}$ to be the empty sequence.

Let  $\rho:W\to Y\times\P^1$ be a birational morphism satisfying the conditions \eqref{eqn:DistingLiftingConditions} for $f:Y\to X$, $E_1,\ldots, E_r, E_{r+1}$; in the case of an empty sequence we assume $\rho$ satisfies \eqref{eqn:DistingLiftingConditions} after replacing $|E_1|$ with $Y$. In either case, the element $\rho^*(Y\times0)(1_W^{Y\times 0, D, D_*})_{D_*,D}\in \Omega_*(|\rho^*(Y\times0)|)_{D_*, D}$ is defined. Let $\bar\rho:|\rho^*(Y\times0)|\to Y$ be the map induced by $\rho$. We call the element
\[
(f\circ \bar\rho)_*(\rho^*(Y\times0)(1_W^{Y\times 0, D, D_*})_{D, D_*})
\]
 of $\Omega_*(X)_{D_*, D}$ a {\em distinguished lifting} of $f\in\sM(X)_{D_*}$.\\
\\
(2) Let $\eta=(f:Y\to X, L_1,\ldots, L_m)$ be a cobordism cycle on $X$
with $f\in \sM(X)_{D_*}$. Choose $\rho:W\to Y\times\P^1$ as in (1), and let $\tilde L_i=(p_1\rho)^*L_i$. We call the element
\[
(f\circ \bar\rho)_*\big(\cn(\tilde L_1)\circ\ldots\circ\cn(\tilde L_m)(\rho^*(Y\times0)(1_W^{Y\times 0, D, E_*})_{D, E_*}))\big)
\]
of $\Omega_*(X)_{D_*, D}$ a distinguished lifting of $\eta$.  We extend this notion to arbitrary elements of $\L_*\otimes\sZ_*(X)_{D_*}$ by $\L_*$-linearity.
\end{definition}

\begin{remark}\label{rem:DistLiftRewrite} Take $(f:Y\to X, L_1,\ldots, L_r)\in \sZ(X)_{D_*}$ and let $\rho:W\to Y\times\P^1$
be a morphism satisfying the conditions  \eqref{eqn:DistingLiftingConditions} for the sequence of divisors $E_1,\ldots, E_{r+1}$ given in definition~\ref{Def:DistinguishedLifting}.  Then the distinguished lifting of  $(f:Y\to X, L_1,\ldots, L_r)$ associated to $\rho$ is given  by
\[
f_*\big(\cn(L_1)\circ\ldots\circ\cn(L_r)((p_1\circ\rho)_*
([\rho^*(Y\times0)\to W]_{D_*,D}))\big),
\]
since $\rho^*(Y\times0)(1_W^{Y\times0, E_*, E_{r+1}})_{E_*, E_{r+1}}=[\rho^*(Y\times0)\to |\rho^*(Y\times0)|]_{D_*,D}$.
\end{remark}

The term ``distinguished lifting'' is justified by the following result:

\begin{lemma}\label{lem:DistLifting} Let $D_1,\ldots, D_n, D$ be pseudo-divisors on $X$, and write $D_*:=D_1,\ldots, D_n$. Let $x$ be in  $\L_*\otimes\sZ_*(X)_{D_*}$ and let $x_D\in \Omega_*(X)_{D_*, D}$ be a distinguished lifting. Let $\can:\L_*\otimes\sZ_*(X)_{D_*}\to \Omega_*(X)_{D_*}$ be the canonical map and  $\res_{D_*,D/D_*}:\Omega_*(X)_{D_*, D}\to \Omega_*(X)_{D_*}$ the forgetful map. Then 
\[
\res_{D_*,D/D_*}(x_D)=\can(x)
\]
 in $\Omega_*(X)_{D_*}$.
\end{lemma}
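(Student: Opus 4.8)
The plan is to reduce, by $\L_*$-linearity and functoriality, to the case of a single cobordism cycle $\eta = (f:Y\to X, L_1,\ldots, L_m)$ with $f\in\sM(X)_{D_*}$ and $Y$ irreducible, and then to the universal case $X=Y$, $f=\id_Y$, since both $\can$ and $\res_{D_*,D/D_*}$ commute with push-forward and with the Chern class operators $\cn(L_i)$ (this pushes the $L_i$ out of the way entirely, as in remark~\ref{rem:DistLiftRewrite}). So it suffices to show that $\res_{D_*, D/D_*}$ applied to the distinguished lifting of $1_Y^{D_*}$, namely $(p_1\circ\rho)_*\big([\rho^*(Y\times0)\to W]_{D_*,D}\big)$, equals $1_Y^{D_*}$ in $\Omega_*(Y)_{D_*}$.

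First I would apply $\res_{D_*,D/D_*}$: since the forgetful maps are natural with respect to $f_*$ and with respect to the formation of divisor classes (``these divisor classes\ldots are compatible via the forget maps $\res_{D_*/D_*'}$'', stated just after the definition of the refined divisor class), we get $(p_1\circ\rho)_*\big([\rho^*(Y\times0)\to W]_{D_*}\big)$ in $\Omega_*(Y)_{D_*}$. Now $\rho:W\to Y\times\P^1$ is projective birational with $W\in\Sm_k$, and by condition (3) of \eqref{eqn:DistingLiftingConditions} the map $p_1\circ\rho:W\to Y$ is in $\sM(Y)_{D_*}$ and $E:=\rho^*(Y\times0)$ is in good position with respect to $D_*$. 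Therefore lemma~\ref{lem:DivClass} (applied on $W$, with $\id_W\in\sM(W)_{D_*}$) gives $[E\to W]_{D_*} = [W; O_W(E)]_{D_*} = \cn(O_W(\rho^*(Y\times0)))(1_W^{D_*})$ in $\Omega_*(W)_{D_*}$. Since $O_W(\rho^*(Y\times0)) = (p_1\circ\rho)^*O_Y(Y\times0)$ and $O_Y(Y\times0)$ (i.e.\ the pullback to $Y$ of $O_{\P^1}(1)$ along the structure map, restricted at $0$) is the trivial bundle — more precisely $\rho^*(Y\times0)$ is linearly equivalent to $\rho^*(Y\times\infty)$, whose support is disjoint from the fundamental locus of $\rho$, so $O_W(\rho^*(Y\times0))\cong (p_1\circ\rho)^*(\text{trivial})$ — the relations $\<\sR^{Dim}_*\>$ (for the trivial bundle pulled back from the point, via $p_1\circ\rho$ factoring through $Y$) force this class to be $1_W^{D_*}$ after the relevant identification, and the cleanest route is: $O_W(E)\cong \mathcal O_W(\rho^*(Y\times\infty))$, and $\cn$ of the latter applied to $1_W^{D_*}$ equals $[\rho^*(Y\times\infty)\to W]_{D_*}$, which push-forward kills the exceptional part and recovers $1_Y^{D_*}$ because $\rho$ is an isomorphism near $Y\times\infty$ and $p_1$ restricted to $Y\times\infty$ is $\id_Y$.

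Carrying this last step out carefully is where the real work lies: one must identify $(p_1\circ\rho)_*\big(\cn(O_W(\rho^*(Y\times0)))(1_W^{D_*})\big)$ with $1_Y^{D_*}$. The argument is that $\rho^*(Y\times0) = \<Y\times0\> + \sum (\text{exceptional components})$, while $\rho^*(Y\times\infty)=\<Y\times\infty\>$ is already smooth and maps isomorphically to $Y$ under $p_1\circ\rho$ (the fundamental locus of $\rho$ lies over $Y\times 0$ by condition (1)); since $O_W(\rho^*(Y\times0))\cong O_W(\rho^*(Y\times\infty))$, lemma~\ref{lem:DivClass} again gives $\cn(O_W(\rho^*(Y\times\infty)))(1_W^{D_*}) = [\rho^*(Y\times\infty)\to W]_{D_*} = \iota_*(1_{\<Y\times\infty\>}^{D_*})$, and pushing forward along $p_1\circ\rho$, which carries $\<Y\times\infty\>$ isomorphically onto $Y$, yields $1_Y^{D_*}$. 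The linear equivalence $O_W(\rho^*(Y\times0))\cong O_W(\rho^*(Y\times\infty))$ holds because both are pullbacks via $p_1\circ\rho$ composed with the projection to $\P^1$ of $O_{\P^1}(0)\cong O_{\P^1}(\infty)$. The main obstacle is thus purely bookkeeping about pseudo-divisors and good position — making sure lemma~\ref{lem:DivClass} applies at each invocation (i.e.\ that $\rho^*(Y\times\infty)$ is in good position with respect to $D_*$, which follows since its support $\<Y\times\infty\>\cong Y$ sits away from the exceptional locus and $\id_Y\in\sM(Y)_{D_*}$), and that $\res_{D_*,D/D_*}$ genuinely commutes with all operations used. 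Once those are in place, the chain of equalities $\res_{D_*,D/D_*}(x_D) = (p_1\rho)_*[\rho^*(Y\times0)\to W]_{D_*} = (p_1\rho)_*[\rho^*(Y\times\infty)\to W]_{D_*} = 1_Y^{D_*} = \can(x)$ closes the argument.
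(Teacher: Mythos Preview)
Your proposal is correct and follows essentially the same route as the paper: reduce to $1_Y^{D_*}$, apply $\res$ to get $[\rho^*(Y\times0)\to W]_{D_*}$, use lemma~\ref{lem:DivClass} and the linear equivalence $O_W(\rho^*(Y\times0))\cong O_W(\rho^*(Y\times t))$ for $t\neq 0$ (the paper uses $t=1$, you use $t=\infty$; either works since $\rho$ is an isomorphism away from $Y\times0$), then push forward. The middle paragraph where you briefly suggest that $O_W(\rho^*(Y\times0))$ is a pullback of a trivial bundle and invoke $\<\sR^{Dim}_*\>$ is muddled and should be deleted---that line bundle is not trivial, and the argument you settle on afterward (comparing with $\rho^*(Y\times\infty)$) is the right one.
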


\begin{proof} It suffices to handle of a cobordism cycle $\eta:= (f:Y\to X, L_1,\ldots, L_r)$; as the maps  $\sZ_*(X)_{D_*} \xrightarrow{\can} \Omega_*(X)_{D_*}\xleftarrow{\res} \Omega_*(X)_{D_*, D}$ are compatible with arbitrary 1st Chern class operators, we reduce to the case $x=(f:Y\to X)$ with $Y$ irreducible.  Let $\rho:W\to Y\times\P^1$ be the birational morphism used to define $x_D$ and let $i:|\rho^*(Y\times0)|\to W$ be the inclusion.   The divisors $B_0:=\rho^*(Y\times0)$ and $B_1:=\rho^*(Y\times 1)$ are both in good position with respect to $D_*$ and are linearly equivalent on $W$, and $\id_W$ is in $\sM(W)_{D_*}$.   Hence, by lemma~\ref{lem:DivClass}, we have
\begin{equation}\label{eqn:**}
[\rho^*(Y\times0)\to W]_{D_*}=[\rho^*(Y\times1)\to W]_{D_*}
\end{equation}
in $\Omega_*(W)_{D_*}$. In addition, $\id_W$ is in $\sM(W)_{B_0, D_*}$ and it follows directly from the definition of the operation $(Y\times0)(-)_{D_*, D}$  that 
\[
i_*(\rho^*(Y\times0)(1_W^{Y\times0, D_*, D})_{D_*, D})=[\rho^*(Y\times0)\to W]_{D_*, D}
\]
and thus $x_D=(f\circ p_1)_*([\rho^*(Y\times0)\to W]_{D_*, D})$. 

Pushing forward the identity \eqref{eqn:**} via $f\circ p_1\circ \rho$ yields
\begin{align*}
\res_{D_*,D/D_*}(x_D)&=\res_{D_*,D/D_*}([\rho^*(Y\times0)\to X]_{D_*,D})\\
&=[\rho^*(Y\times0)\to X]_{D_*}\\
&=[\rho^*(Y\times1)\to X]_{D_*}
\end{align*}
in $\Omega_*(X)_{D_*}$. As $\rho^*(Y\times 1)\to X$ is isomorphic as an $X$-scheme to $f:Y\to X$, the result follows.
\end{proof}

This yields the following similar result.

\begin{lemma}\label{lem:TrivDistLift} Let $\eta$ be in $\sZ_*(X)_{D_*, D}$. Then
\[
\Phi_{X, D_*, D}(\res_{D_*, D/D_*}(\eta))=\can(\eta)
\]
in  $\Omega_*(X)_{D_*, D}$.
\end{lemma}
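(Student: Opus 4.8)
The plan is to follow the proof of Lemma~\ref{lem:DistLifting} almost verbatim, the new point being that when $\eta$ already lies in $\sZ_*(X)_{D_*,D}$ the comparison of divisor classes used there takes place directly in $\Omega_*(-)_{D_*,D}$ rather than only in $\Omega_*(-)_{D_*}$. Since all the maps involved are additive, it suffices to treat a single cobordism cycle $\eta=(f:Y\to X,L_1,\ldots,L_m)$ (so $Y$ is irreducible) with $f\in\sM(X)_{D_*,D}$. First I would fix a birational morphism $\rho:W\to Y\times\P^1$ of the kind used to form a distinguished lifting $\Phi_{X,D_*,D}(\eta)$, i.e.\ satisfying the conditions \eqref{eqn:DistingLiftingConditions} for the divisor sequence $E_1,\ldots,E_{r+1}$ attached to $f$ in Definition~\ref{Def:DistinguishedLifting}(1); since $f\in\sM(X)_{D_*,D}$ this sequence has $\Div f^*D$ as its last entry $E_{r+1}$ (and is the empty sequence in the degenerate case $r=0$, $f(Y)\subset|D|$). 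By Remark~\ref{rem:DistLiftRewrite}, $\Phi_{X,D_*,D}(\eta)$ is obtained from $(p_1\circ\rho)_*\bigl([\rho^*(Y\times0)\to W]_{D_*,D}\bigr)$ by applying $\cn(L_1)\circ\cdots\circ\cn(L_m)$ and then $f_*$.

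The heart of the proof is the following bookkeeping claim: $\id_W$ is in $\sM(W)_{D_*,D}$, and the divisors $B_0:=\rho^*(Y\times0)$ and $B_1:=\rho^*(Y\times1)$ on $W$ are simple normal crossing divisors in good position with respect to $D_*,D$. For the first assertion: using conditions (3) and (4) of Lemma~\ref{lem:DistinguishedLifting} together with the remarks following Definition~\ref{def:AdmissibleMaps}, one checks that the pseudo-divisors among $D_1,\ldots,D_n,D$ essential for $f\circ p_1\circ\rho$ are exactly $D_{i_1},\ldots,D_{i_r}$ (and $D$, if $f(Y)\not\subset|D|$), with associated Cartier divisors on $W$ the pullbacks $(p_1\circ\rho)^*E_j$, so condition (4) yields $\id_W\in\sM(W)_{B_0,D_*,D}$ and hence $\id_W\in\sM(W)_{D_*,D}$. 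For $B_0$: if $\id_W$ has a leading pseudo-divisor with respect to $D_*,D$ its divisor is $(p_1\circ\rho)^*E_1$, and the requirement that $B_0+(p_1\circ\rho)^*E_1$ be a simple normal crossing divisor on $W$ is precisely condition (3); if there is no leading pseudo-divisor, $B_0$ is in good position for that reason. For $B_1$: condition (1) makes $\rho$ an isomorphism over a neighbourhood of $Y\times1$, so $B_1$ is smooth and reduced, isomorphic to $Y$ as a $Y$-scheme via $p_1\circ\rho$, and $B_1+(p_1\circ\rho)^*E_1$ is a simple normal crossing divisor near $B_1$ because $E_1$ is one on $Y$; thus $B_1$ too is in good position with respect to $D_*,D$. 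By Remark~\ref{rem:GenPosition}(1), $B_0$ and $B_1$ are then admissible with respect to $D_*,D$, so that the classes $[B_j\to W]_{D_*,D}$ are defined.

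Granting this, Lemma~\ref{lem:DivClass}, applied over $W$ for the sequence $D_*,D$, gives $[B_0\to W]_{D_*,D}=[W;\sO_W(B_0)]_{D_*,D}$ and $[B_1\to W]_{D_*,D}=[W;\sO_W(B_1)]_{D_*,D}$; since $Y\times0$ and $Y\times1$ are linearly equivalent on $Y\times\P^1$, so are $B_0$ and $B_1$ on $W$, whence $\sO_W(B_0)\cong\sO_W(B_1)$ and therefore $[B_0\to W]_{D_*,D}=[B_1\to W]_{D_*,D}$ in $\Omega_*(W)_{D_*,D}$. Substituting $B_1$ for $B_0$ in the description of $\Phi_{X,D_*,D}(\eta)$ from the first paragraph, and using that $B_1$ is smooth and reduced, so that $[B_1\to|B_1|]_{D_*,D}=1_{B_1}^{D_*,D}$ and $(p_1\circ\rho)_*\bigl([B_1\to W]_{D_*,D}\bigr)=1_Y^{D_*,D}$ under the isomorphism $B_1\cong Y$ over $Y$, I would conclude that $\Phi_{X,D_*,D}(\eta)=f_*\bigl(\cn(L_1)\circ\cdots\circ\cn(L_m)(1_Y^{D_*,D})\bigr)=(f:Y\to X,L_1,\ldots,L_m)=\can(\eta)$ in $\Omega_*(X)_{D_*,D}$.

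The one genuine obstacle is the bookkeeping of the middle paragraph — verifying that $\id_W\in\sM(W)_{D_*,D}$ and that $B_0$ (and $B_1$) are in good position with respect to the full sequence $D_*,D$ — which requires tracking which of $D_1,\ldots,D_n,D$ are essential for $f\circ p_1\circ\rho$, identifying the leading pseudo-divisor of $\id_W$ with the appropriate pullback $(p_1\circ\rho)^*E_1$, and dealing with the degenerate cases $r=0$ and $f(Y)\subset|D|$. Once this is in place the remaining steps are a transcription of the corresponding part of the proof of Lemma~\ref{lem:DistLifting}, with $D_*$ replaced throughout by $D_*,D$.
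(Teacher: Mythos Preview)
Your direct approach is natural, but the central bookkeeping claim has a gap. Conditions (3) and (4) of lemma~\ref{lem:DistinguishedLifting} for the sequence $E_1,\ldots,E_r,E_{r+1}$ do \emph{not} yield $\id_W\in\sM(W)_{D_*,D}$. Condition~(3) only says $p_1\circ\rho\in\sM(Y)_{E_*}$, i.e.\ $\id_W\in\sM(W)_{D_*}$; condition~(4) says $\id_W\in\sM(W)_{B_0,D_*,D}$, which is a statement about the iterated intersections $B_0\cap\Div D_1\cap\cdots$ and does not imply that $\cap_{i=1}^n\Div(p_1\rho)^*D_i\cap\Div(p_1\rho)^*D$ is a Cartier divisor on $W$. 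Your ``hence $\id_W\in\sM(W)_{D_*,D}$'' is exactly the missing ingredient, and without it the notion of $B_0$ being in good position with respect to $D_*,D$ (definition~\ref{Def:GoodPosition}) is not even available. The construction of $\rho$ in the proof of lemma~\ref{lem:DistinguishedLifting} blows up $W_1$ specifically to achieve the conditions listed in (3) and (4), and achieving $\id_W\in\sM(W)_{E_*,E_{r+1}}$ is not among them.

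The paper circumvents this by a doubling trick rather than by redoing the comparison of divisor classes. One passes to the sequence $D_*,D,D$: since repeating the last pseudo-divisor changes nothing, $\sM(X)_{D_*,D}=\sM(X)_{D_*,D,D}$, and a $\rho$ satisfying \eqref{eqn:DistingLiftingConditions} for the longer sequence $E_1,\ldots,E_{r+1},E_{r+1}$ also satisfies them for $E_1,\ldots,E_{r+1}$. One then applies the already-proven lemma~\ref{lem:DistLifting} to the longer sequence to get $\res_{D_*,D,D/D_*,D}\bigl(\Phi_{X,D_*,D,D}(1_X^{D_*,D})\bigr)=\can(1_X^{D_*,D})$ in $\Omega_*(X)_{D_*,D}$, and observes that the left side equals $\Phi_{X,D_*,D}\bigl(\res_{D_*,D/D_*}(1_X^{D_*,D})\bigr)$ because the same $\rho$ computes both liftings. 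Note that for the longer sequence, condition~(3) reads $p_1\circ\rho\in\sM(Y)_{E_*,E_{r+1}}$ and $B_0$ in good position with respect to $E_*,E_{r+1}$ --- precisely what you needed but could not extract. So the doubling device is not merely a shortcut: it is the mechanism that produces a $\rho$ with the stronger admissibility you were assuming, and it lets one invoke lemma~\ref{lem:DistLifting} wholesale rather than transcribe its proof.
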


\begin{proof} From the relations described in remark~\ref{rem:DistLiftProps}, we reduce to the case of $\eta=1_X^{D_*,D}\in\sM(X)_{D_*, D}\subset \sM(X)_{D_*}$, with $X$ irreducible and in $\Sm_k$.   Consider the new sequence of pseudo-divisors $D_*, D, D$. One sees directly that  $\sM(X)_{D_*, D}=\sM(X)_{D_*, D,D}$, so we have the element $1_X^{D_*, D,D}$ in $\sM(X)_{D_*, D,D}$ with  $\res_{D_*,D, D/D_*, D}(1_X^{D_*, D, D})=1_X^{D_*, D}$. If $\rho:W\to X\times\P^1$ satisfies the conditions \eqref{eqn:DistingLiftingConditions} for defining the distinguished lifting $\Phi_{X, D_*, D,D}(1_X^{D_*, D})$, then the same $\rho$ satisfies 
the conditions \eqref{eqn:DistingLiftingConditions} for defining the distinguished lifting $\Phi_{X, D_*, D}(1_X^{D_*})$. This implies that
\[
\res_{D_*, D, D/D_*,D}(\Phi_{X, D_*, D,D}(1_X^{D_*, D}))=\Phi_{X, D_*, D}(\res_{D_*, D/D_*}(1_X^{D_*, D})).
\]
By lemma~\ref{lem:DistLifting}, we have $\res_{D_*, D, D/D_*,D}(\Phi_{X, D_*, D,D}(1_X^{D_*, D}))=\can(1_X^{D_*,D})$ in $\Omega_*(X)_{D_*,D}$,  hence $\Phi_{X, D_*, D}(\res_{D_*, D/D_*}(1_X^{D_*, D}))=\can(1_X^{D_*,D})$, as desired. 
\end{proof}

Essential for the construction is the next result.

\begin{proposition}\label{prop:DistLifting} Let $\eta$ be in $\sZ_*(X)_{D_*}$, and let $\eta_1,\eta_2\in \Omega_*(X)_{D_*, D}$ be
distinguished liftings of $\eta$. Then $\eta_1=\eta_2$ in $\Omega_*(X)_{D_*, D}$.
\end{proposition}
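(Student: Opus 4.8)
The plan is to reduce the problem to comparing two blow-up diagrams $\rho_1\colon W_1\to Y\times\P^1$ and $\rho_2\colon W_2\to Y\times\P^1$ for a single generator $\eta=(f\colon Y\to X)$ with $Y$ irreducible; the passage to a general $\eta\in\sZ_*(X)_{D_*}$ is handled by naturality of the construction under the first Chern class operators (as in the reduction opening the proof of Lemma~\ref{lem:DistLifting}). Using Remark~\ref{rem:DistLiftRewrite}, each distinguished lifting is $f_*\circ(p_1\circ\rho_j)_*([\rho_j^*(Y\times0)\to W_j]_{D_*,D})$, so it suffices to work on $Y$ itself, replacing $X$ by $Y$, $f$ by $\id_Y$, and $D_*, D$ by $f^*(D_*), f^*D$; we must show the two divisor classes $(p_1\circ\rho_j)_*([\rho_j^*(Y\times0)\to W_j]_{D_*,D})$ agree in $\Omega_*(Y)_{D_*,D}$.

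The main device will be to dominate both $W_1$ and $W_2$ by a common $W_3$: given $\rho_1$ and $\rho_2$ satisfying the conditions \eqref{eqn:DistingLiftingConditions}, one takes a resolution $W_3\to W_1\times_{Y\times\P^1}W_2$ so that $W_3\to Y\times\P^1$ again satisfies all of \eqref{eqn:DistingLiftingConditions} --- this is exactly the kind of statement Lemma~\ref{lem:DistinguishedLifting}(B),(C) is designed to supply, since $W_3\to W_j$ has fundamental locus over the relevant exceptional loci, contained in $|E_1|\times\P^1$. So it is enough to treat the case where $\rho_2$ factors as $W_2\xrightarrow{\sigma}W_1\xrightarrow{\rho_1}Y\times\P^1$ with $\sigma$ projective birational, $W_2\in\Sm_k$, and $\sigma$ compatible with all admissibility and good-position requirements. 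Now the key geometric point: $\sigma$ restricts to a birational morphism $\sigma_0\colon B_0^{(2)}:=\rho_2^*(Y\times0)\to B_0^{(1)}:=\rho_1^*(Y\times0)$, both of which are simple normal crossing divisors in good position, and one computes that $[\,B_0^{(2)}\to W_2\,]_{D_*,D}$ pushes forward under $\sigma$ to $[\,B_0^{(1)}\to W_1\,]_{D_*,D}$. This is where I expect to invoke Lemma~\ref{lem:DivClass} (to replace each divisor class by a first Chern class of the associated line bundle on the ambient smooth scheme, $O_{W_j}(B_0^{(j)})$), the projection formula for $\cn$ under $\sigma_*$, and the fact that $\sigma^*O_{W_1}(B_0^{(1)})$ and $O_{W_2}(B_0^{(2)})$ differ by a line bundle supported on the $\sigma$-exceptional locus --- which, by construction, meets the relevant strata only in components that die under the dimension and section relations. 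Pushing the resulting identity down via $p_1\circ\rho_1$ gives the claim.

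The main obstacle is the bookkeeping in that last step: $\sigma^*(Y\times0)$ and $\rho_2^*(Y\times0)$ need not be equal as divisors on $W_2$ --- they differ by an effective $\sigma$-exceptional divisor $F$ supported over $|E_1|\times\P^1$, so $O_{W_2}(\rho_2^*(Y\times0))=\sigma^*O_{W_1}(\rho_1^*(Y\times0))\otimes O_{W_2}(-F)$ (or $+F$, depending on orientation). One must check that applying $\cn$ of this correction bundle to the divisor class, and then pushing forward, produces no net contribution in $\Omega_*(Y)_{D_*,D}$: the exceptional components lie over a locus of strictly smaller dimension in $Y$, so the terms vanish by the dimension relations $\langle\sR_*^{Dim}\rangle$ after push-forward, exactly as in the proof of Lemma~\ref{lem:WeakDim}. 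An alternative, possibly cleaner, route --- which I would try first --- is to bypass $\sigma$ entirely and instead compare each $(p_1\circ\rho_j)_*([\rho_j^*(Y\times0)\to W_j]_{D_*,D})$ to the \emph{same} object built from $\rho_j^*(Y\times1)$ via the linear equivalence $Y\times0\sim Y\times1$ on $W_j$ (Lemma~\ref{lem:DivClass} again, together with Proposition~\ref{prop:LinearEquiv2} applied over the section at $\infty$, whose fiber identifies both transforms with $Y'\to Y$); since $\rho_j^*(Y\times1)\to Y$ is an isomorphism and carries the leading divisor $\Div D$ to a fixed simple normal crossing divisor independent of $j$, both liftings equal one canonical class, and the proposition follows.
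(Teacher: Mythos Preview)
Your proposal contains a genuine gap in both routes you sketch, and the paper's argument is substantially different.

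\textbf{The domination approach.} Suppose you have $\sigma\colon W_2\to W_1$ with $\rho_2=\rho_1\circ\sigma$, so in fact $\rho_2^*(Y\times0)=\sigma^*\rho_1^*(Y\times0)$ as Cartier divisors (there is no exceptional correction here). Using Lemma~\ref{lem:DivClass} as you intend and the projection formula, you would get
\[
\sigma_*\bigl([\rho_2^*(Y\times0)\to W_2]_{D_*,D}\bigr)=\cn\bigl(O_{W_1}(\rho_1^*(Y\times0))\bigr)\bigl([\sigma\colon W_2\to W_1]_{D_*,D}\bigr),
\]
and to match this with $[\rho_1^*(Y\times0)\to W_1]_{D_*,D}$ you would need $[\sigma\colon W_2\to W_1]=1_{W_1}$ in $\Omega_*(W_1)_{D_*,D}$. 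That is birational invariance in the refined theory, which is not available at this point in the development; indeed, it is essentially a consequence of the very moving lemma whose proof rests on this proposition. Your attempt to absorb the discrepancy into the dimension relations $\langle\sR_*^{Dim}\rangle$ does not work: those relations kill Chern monomials pulled back from a scheme of too-small dimension, not arbitrary classes supported over a lower-dimensional locus.

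\textbf{The alternative route.} You propose comparing $[\rho_j^*(Y\times0)\to W_j]_{D_*,D}$ with the analogous class for $Y\times1$ via Lemma~\ref{lem:DivClass}. But that lemma requires $\id_{W_j}\in\sM(W_j)_{D_*,D}$ and $\rho_j^*(Y\times0)$ in good position with respect to $D_*,D$. Neither holds: conditions (3) and (4) of \eqref{eqn:DistingLiftingConditions} give only $\id_{W_j}\in\sM(W_j)_{D_*}$ with good position for $D_*$, and $\id_{W_j}\in\sM(W_j)_{Y\times0,\,D_*,\,D}$. The whole difficulty is that moving $D$ to the front of the sequence changes the admissibility constraints; your route collapses to the proof of Lemma~\ref{lem:DistLifting}, which only yields equality \emph{after} applying $\res_{D_*,D/D_*}$.

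\textbf{What the paper does.} The paper builds a two-parameter deformation $T\to X\times\P^1\times\P^1$, incorporating both blow-ups $W_1,W_2$ along the two $\P^1$ factors. It then applies Proposition~\ref{prop:Commutativity} to the pair of divisors $\sD_0=\phi^*(X\times0\times\P^1)$ and $\sD_0'=\phi^*(X\times\P^1\times0)$, obtaining an equality in $\Omega_*(|\sD_0|\cap|\sD_0'|)_{D_*,D}$, and then Proposition~\ref{prop:LinearEquiv2} (whose tailored hypotheses are precisely engineered for this situation) to slide each side to the fibre over $1$ in the \emph{other} $\P^1$. Those fibres are $W_1$ and $W_2$, and the identity pushes down to $\eta_1=\eta_2$. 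The commutativity relation is the essential input you are missing.
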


\begin{proof} First of all, we may assume that $\eta$ is a cobordism cycle $(f:Y\to X,L_1,\ldots, L_r)$, with $f\in \sM(X)_{D_*}$. Next, it follows from the formula in remark~\ref{rem:DistLiftRewrite} that, if $\tau$ is a distinguished lifting of $(f,L_1,\ldots,L_r)$, then there is a distinguished lifting  $\tilde{1}^{D_*}_Y\in \Omega_*(Y)_{D_*, D}$ of    $1^{D_*}_Y\in\Omega_*(Y)_{D_*}$ with
\[
\tau=f_*(\cn(L_1)\circ\ldots\circ \cn(L_r)(\tilde{1}^{D_*}_Y)).
\]
Thus, it suffices to consider the case of $X\in\Sm_k$, and to show that two distinguished liftings of $1^{D_*}_X\in\sM(X)_{D_*}$ agree in $\Omega_*(X)_{D_*, D}$.

We may assume that $X$ is irreducible. If $|D|=X$, then $\sZ_*(X)_{D_*}=\sZ_*(X)_{D_*,D}$ and $\Omega_*(X)_{D_*}=\Omega_*(X)_{D_*,D}$. By lemma~\ref{lem:DistLifting}, each distinguished lifting of $1^{D_*}_X$ agrees with $1^{D_*}_X$ in $\Omega_*(X)_{D_*}$, hence each two distinguished liftings of $1^{D_*}_X$ agree in $\Omega_*(X)_{D_*,D}$.
 
Thus, we may assume that $D$ is a Cartier divisor on $X$. Let $\eta_1$ and $\eta_2$ be two
distinguished liftings of $1^{D_*}_X$. We may also suppose that $D_1,\ldots, D_n$ are Cartier divisors on $X$ (we allow the case $n=0$).

Let  $(E_1,\ldots,  E_r, E_{r+1})$ be the sequence $(D_1,\ldots, D_n, D)$; in particular, $E_1=D_1$ for $r\ge1$, while $E_1=D$ in case $r=0$. Suppose  for $i=1,2$ that $\eta_i$ is constructed via a birational morphism $\rho_i: W_i\to X\times\P^1$ satisfying \eqref{eqn:DistingLiftingConditions} for the sequence $(E_1,\ldots,  E_r, E_{r+1})$. Let  $Z_i$ be a subscheme of $X\times\P^1$, supported in $|E_1|\times0$, such that $W_i$ is the blow-up of $X\times\P^1$ along $Z_i$, $i=1,2$. Let $\bar\rho_i:|\rho_i^*(X\times0)|\to X$ be the map induced by $\rho_i$, $i=1,2$; thus
\[
\eta_i=\bar\rho_{i*}(\rho_i^*(X\times0)(1_{W_i}^{X\times0, D_*, D}));\quad i=1,2.
\]

Let $\phi_1:T_1\to X\times\P^1\times\P^1$ be the blow-up along $Z_1\times\P^1$,
let $\<Z_2\>$ denote the proper transform of $p_{13}^*(Z_2)$ to $T_1$ and let $\psi_1:T_2\to T_1$ be the blow-up of $T_1$ along $\<Z_2\>$, with structure morphism $\phi_2:T_2\to X\times\P^1\times\P^1$, $\phi_2=\phi_1\circ\psi_1$. Let $\sE_2$ be the exceptional divisor of $T_2\to X\times\P^1\times\P^1$.

We claim there is a blow-up $\psi_2:T\to T_2$ of $T_2$ at a closed subscheme $Z$ supported over $|E_1|\times0\times0$,  such that $T$ is smooth over $k$, $\id_T$ is in $\sM(T)_{X\times\P^1\times0, D_*, D}\cap \sM(T)_{X\times0\times\P^1, D_*, D}$ and the divisor $X\times\P^1\times0+X\times0\times\P^1$ pulls back to a simple normal crossing divisor on $T$.  To see this, define open subschemes $U_2$, $T_2^1$ and $T_2^2$ of $T_2$ by 
\begin{align*}
&U_2:=T_2\setminus\phi^{-1}(|E_1|\times0\times0),\\
&T_2^1:=T_2\setminus\phi^{-1}(|E_1|\times\P^1\times0),\\
&T_2^2:=T_2\setminus\phi^{-1}(|E_1|\times0\times\P^1).
\end{align*}
Let $U_2^1\subset T_2^1$ and $U_2^2\subset T_2^2$ be the open subschemes
\begin{align*}
&U_2^1:=T_2\setminus\phi^{-1}(X\times\P^1\times0),\\
&U_2^2:=T_2\setminus\phi^{-1}(X\times0\times\P^1).
\end{align*}
Finally, let $U=(X\setminus |E_1|)\times \P^1\times\P^1\subset X\times\P^1\times\P^1$.

Clearly  $U_2$ is in $\Sm_k$,  $\phi_2:\phi_2^{-1}(U)\to U$ is an isomorphism and $\id_U$ is in  $\sM(U)_{D_*}\cap\sM(U)_{X\times0\times\P^1, D_*, D}\cap \sM(U)_{X\times\P^1\times0, D_*, D}$. Next,  $U_2^1=W_1\times(\P^1\setminus\{0\})$, so $\id_{U_2^1}$ is in $\sM(U_2^1)_{D_*}\cap \sM(U_2^1)_{X\times0\times\P^1, D_*, D}$. In addition, $\id_{U_2^1}$  is in  $\sM(U_2^1)_{X\times\P^1\times0, D_*, D}$, since $X\times\P^1\times0$ pulls back to the empty divisor on $U_2^1$.  As $T_2^1\setminus U_2^1$ is contained in $\phi_2^{-1}(U)$, this implies that 
\[
\id_{T_2^1}\in \sM(T_2^1)_{D_*}\cap\sM(T_2^1)_{X\times0\times\P^1, D_*, D}\cap\sM(T_2^1)_{X\times\P^1\times0, D_*, D}.
\]
 By symmetry, we have
\[
\id_{T_2^2}\in \sM(T_2^2)_{D_*}\cap\sM(T_2^2)_{X\times0\times\P^1, D_*, D}\cap\sM(T_2^2)_{X\times\P^1\times0, D_*, D}.
\]
Since $U_2=T_2^1\cup T_2^2$, this shows that 
\[
\id_{U_2}\in \sM(U_2)_{D_*}\cap\sM(U_2)_{X\times0\times\P^1, D_*, D}\cap\sM(U_2)_{X\times\P^1\times0, D_*, D}.
\]
Thus, there is a blow up $T_2'\to T_2$ with fundamental locus contained in $T_2\setminus U_2$ so that the closed subschemes of $X\times\P^1\times\P^1$,
\begin{align*}
&X\times\P^1\times0\cap\cap_{i=1}^sE_i\times\P^1\times\P^1;\ s=1, \ldots, n+1,\\
&X\times0\times\P^1\cap\cap_{i=1}^sE_i\times\P^1\times\P^1;\ s=1, \ldots, n+1,
\end{align*}
all pull back to Cartier divisors on $T_2'$. Blowing up $T_2'$ further, again in closed subschemes lying over $T_2\setminus U_2$, we may resolve the singularities of $T_2'$ and achieve that $X\times\P^1\times0+X\times0\times\P^1$ and $D_1\times\P^1\times\P^1$ pull back to  simple normal crossing divisors. This gives us the desired blow-up $\psi_2:T\to T_2$.

Let  $\phi:T\to X\times\P^1\times\P^1$ be the composition $\phi_1\circ \psi_1\circ\psi_2$. We have the Cartier divisors $\sD_0:=\phi^*(X\times0\times\P^1)$, $\sD_1:=\phi^*(X\times1\times\P^1)$, $\sD'_0:=\phi^*(X\times\P^1\times0)$ and $\sD'_1:=\phi^*(X\times\P^1\times1$).  We claim that
\begin{align*}
&\sD_0\text{ is admissible with respect to }\sD_0', D_*, D,\\
&\sD'_0\text{ is admissible with respect to }\sD_0, D_*, D.
\end{align*}
Indeed, by proposition~\ref{prop:Commutativity}, $\sD_0$ is admissible with respect to $X\times \P^1\times0, D_*, D$. Since $\sD'_0=\phi^*(X\times\P^1\times0)$, it follows that $\sD_0$ is admissible with respect to $\sD_0', D_*, D$. The argument for $\sD_0'$ is the same.

By proposition~\ref{prop:Commutativity} we have
\begin{equation}\label{eqn:Com1}
\sD_0([\sD'_0\to |\sD'_0|]_{\sD_0, D_*, D})_{D_*, D}=\sD'_0([\sD_0\to |\sD_0|]_{\sD'_0, D_*, D})_{D_*, D}
\end{equation}
in $\Omega_*( |\sD_0|\cap |\sD'_0|)_{D_*, D}$. 

We note that $T_1$ is isomorphic to $W_1\times\P^1$. Via this isomorphism, let $f:T\to W_1$ be the composition $p_1\circ\psi_1\circ\psi_2$ and let  $V=\phi^{-1}(X\times\P^1\times(\P^1\setminus\{0\}))\subset T$. As $T\to T_1$ is an isomorphism over $\phi^{-1}(X\times\P^1\times(\P^1\setminus\{0\})$, we see that the restriction $f_V:V\to W_1$ of $f$ identifies $V$ with $W_1\times(\P^1\setminus\{0\})$. Let $s:W_1\to V$ be the 1-section. Letting $\bar{D}=\rho_1^*(X\times0)$, we have $\sD_0\cap V=f_V^{-1}(\bar{D})$, $\sD_1'\subset V$ and $\sD_1'$ is the reduced divisor $s(W_1)$. As $\id_{W_1}$ is in $\sM(W_1)_{X\times0, D_*, D}$ be construction, the simple normal crossing divisor $\rho_1^*(X\times0)$ is admissible with respect to $D_*, D$ (lemma~\ref{lem:GoodPosition}(1)). Finally, $O_T(\sD_0')\cong O_T(\sD_1')$. 

Thus, the hypotheses for proposition~\ref{prop:LinearEquiv2} are satisfied (with $E_*=D_*, D$, $B=\sD_0$, $D_0=\sD_0'$ and $D_1=\sD_1'$) and we may conclude that $\sD_0\cap\sD_1'$ is a simple normal crossing divisor on $\sD_1'$, admissible with respect to $D_*, D$, and
\begin{equation}\label{eqn:Com2}
i_{0*}(\sD_0'([\sD_0\to |\sD_0|]_{\sD'_0, D_*, D})_{D_*, D})=i_{1*}([\sD_0\cap\sD_1'\to |\sD_0|\cap|\sD_1'|]_{D_*, D})
\end{equation}
in $\Omega_*(|\sD_0|)_{D_*, D}$, where $i_0: |\sD_0|\cap|\sD_0'|\to |\sD_0|$, $i_1: |\sD_0|\cap|\sD_1'|\to |\sD_0|$ are the inclusions. Similarly, 
 $\sD_0'\cap\sD_1$ is a simple normal crossing divisor on $\sD_1$, admissible with respect to $D_*, D$, and
\begin{equation}\label{eqn:Com3}
i'_{0*}(\sD_0([\sD'_0\to |\sD'_0|]_{\sD_0, D_*, D})_{D_*, D})=i'_{1*}([\sD'_0\cap\sD_1\to |\sD'_0|\cap|\sD_1|]_{D_*, D})
\end{equation}
in $\Omega_*(|\sD'_0|)_{D_*, D}$, where $i'_0: |\sD'_0|\cap|\sD_0|\to |\sD'_0|$, $i'_1: |\sD'_0|\cap|\sD_1|\to |\sD'_0|$ are the inclusions.

Putting \eqref{eqn:Com1},  \eqref{eqn:Com2} and  \eqref{eqn:Com3} together and pushing forward to  $X$ gives the identity
\[
[\sD'_0\cap\sD_1\to X]_{D_*, D}=[\sD_0\cap\sD_1'\to X]_{D_*, D}
\]
in $\Omega_*(X)_{D_*, D}$. But via the isomorphisms $\sD'_1\cong W_1$, $\sD_1\cong W_2$, we have 
\begin{align*}
&[\sD_0\cap\sD'_1\to X]_{D_*, D}=(\bar\rho_1)_*(\rho_1^*(X\times0)(1_{W_1}^{X\times0, D_*, D}))=\eta_1,\\
&[\sD'_0\cap\sD_1\to X]_{D_*, D}=(\bar\rho_2)_*(\rho_2^*(X\times0)(1_{W_2}^{X\times0, D_*, D}))=\eta_2,
\end{align*}
and thus $\eta_1=\eta_2$, completing the proof.
\end{proof}

\begin{remark}\label{rem:DistLiftProps}\ind{distinguished lifting!uniqueness}
Via proposition~\ref{prop:DistLifting}, we may speak of {\em the} distinguished lifting of an
element of
$\L_*\otimes\sZ_*(X)_{D_*}$ to $\Omega_*(X)_{D_*, D}$. We have the following properties of the
distinguished lifting:
\begin{enumerate}
\item Sending $\eta\in\L_*\otimes\sZ_*(X)_{D_*}$ to its distinguished lifting
$\tilde\eta$ defines an $\L_*$-linear homomorphism $\Phi_{X, D_*, D}:\L_*\otimes\sZ_*(X)_{D_*}\to\Om_*(X)_{D_*, D}$, making the diagram
\[
\xymatrixcolsep{50pt}
\xymatrix{
\L_*\otimes\sZ_*(X)_{D_*}\ar[r]^-{\Phi_{X, D_*, D}}\ar[rd]_{\can}&\Om_*(X)_{D_*, D}\ar[d]^{\res_{D_*, D/D_*}}\\
&\Om_*(X)_{D_*}}
\]
commute.
 \item Given $f:X'\to X$ projective, we have
 \[
 \Phi_{X, D_*, D}\circ f_*=f_*\circ \Phi_{X', D_*, D}.
 \]
  
\item If $L$ is a  line bundle on $X$, then 
\[
\Phi_{X, D_*, D}\circ\cn(L)=\cn(L)\circ\Phi_{X, D_*, D}.
\]

\item For $f:X'\to X$ smooth, we have
  \[
 \Phi_{X', D_*, D}\circ f^*=f^*\circ \Phi_{X, D_*, D}.
 \]

\end{enumerate}
Property (1) is just lemma~\ref{lem:DistLifting} and proposition~\ref{prop:DistLifting}. The  properties (2) and (3) follow from the formula in remark~\ref{rem:DistLiftRewrite}. For (4), suppose $g:Y\to X$ is in $\sM(X)_{D_*}$ and $\rho:W\to Y\times\P^1$ is used to construct the distinguished lifting of $g$. As $f$ is smooth, it follows that we may use $W':=X'\times_XW\to X'\times_XY\times\P^1$ to construct the distinguished lifting of $f^*(g)$, from which (4) follows easily.
\end{remark}

\begin{lemma}\label{lem:ChernLifting}  Let $F$ be in $\L_*[[u_1,\ldots, u_m]]$, let $f:W\to X$ be in $\sM(X)_{D_*,D}$, and let $L_1,\ldots,L_m$ be line bundles on $W$. Take an element $\eta\in\sZ_*(W)_{D_*}$and let $F_N$ denote the truncation of $F$ after total degree $N$. Then 
\begin{multline*}
\Phi_{W, D_*, D}(f_*(F_N(\cn(L_1),\ldots,\cn(L_m))(\eta)))\\=f_*(F(\cn(L_1),\ldots,\cn(L_m))(\Phi_{W, D_*, D}(\eta)))
\end{multline*}
for all $N$ sufficiently large.
\end{lemma}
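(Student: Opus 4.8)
The plan is to derive this purely formally, from the properties of the distinguished-lifting homomorphism $\Phi$ collected in Remark~\ref{rem:DistLiftProps} together with the local nilpotence of the operators $\cn(L_i)$; no geometric input beyond what is already packaged there is needed.

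First I would move $\Phi$ past the push-forward along the projective morphism $f:W\to X$ using property (2) of Remark~\ref{rem:DistLiftProps}, and past the operator $F_N(\cn(L_1),\ldots,\cn(L_m))$ using properties (1) and (3). Since $F_N$ is a \emph{polynomial}, $F_N(\cn(L_1),\ldots,\cn(L_m))$ is a finite $\L_*$-linear combination of finite compositions $\cn(L_1)^{\alpha_1}\circ\cdots\circ\cn(L_m)^{\alpha_m}$; so $\L_*$-linearity of $\Phi$ together with the identity $\Phi\circ\cn(L)=\cn(L)\circ\Phi$ (applied once per factor $\cn(L_i)$) gives, for \emph{every} $N$,
\[
\Phi_{X,D_*,D}\big(f_*(F_N(\cn(L_1),\ldots,\cn(L_m))(\eta))\big)=f_*\big(F_N(\cn(L_1),\ldots,\cn(L_m))(\Phi_{W,D_*,D}(\eta))\big),
\]
where on the right the operators $\cn(L_i)$ act on $\Omega_*(W)_{D_*,D}$. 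This step also uses that $f\in\sM(X)_{D_*,D}$, so that $\Phi_{W,D_*,D}$ is defined on $\L_*\otimes\sZ_*(W)_{D_*}$ and is compatible with the maps involved.

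The second and last step is the stabilization in $N$. The operators $\cn(L_i)$ on $\Omega_*(W)_{D_*,D}$ commute and are locally nilpotent, so applied to the single fixed element $\Phi_{W,D_*,D}(\eta)$ only finitely many monomials $\cn(L_1)^{\alpha_1}\circ\cdots\circ\cn(L_m)^{\alpha_m}$ act nontrivially; hence there is an $N_0$ such that
\[
F_N(\cn(L_1),\ldots,\cn(L_m))(\Phi_{W,D_*,D}(\eta))=F(\cn(L_1),\ldots,\cn(L_m))(\Phi_{W,D_*,D}(\eta))
\]
in $\Omega_*(W)_{D_*,D}$ for all $N\ge N_0$, the right-hand side being the well-defined action of the power series $F$. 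Applying $f_*$ and combining with the first step yields the lemma for all $N\ge N_0$.

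I do not expect a genuine obstacle. The one point to keep in mind — and the reason the conclusion is only asserted ``for $N$ sufficiently large'' — is that the stabilization in the second step holds at a fixed element of $\Omega_*(W)_{D_*,D}$, not uniformly in $\eta$; and since $f_*(F_N(\cdots)(\eta))$ lies in $\L_*\otimes\sZ_*(X)_{D_*}$, the left-hand occurrence of $\Phi$ is the lifting map $\Phi_{X,D_*,D}$ on $X$.
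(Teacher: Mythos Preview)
Your argument is correct and is exactly the approach the paper takes: the one-line proof there reads ``This follows from remark~\ref{rem:DistLiftProps}, using the relations $\<\sR_*^{Dim}\>(X)_{D_*, D}$,'' and your two steps unpack precisely this---properties (1), (2), (3) of the remark for the commutation, and the dimension relations (equivalently, local nilpotence of the $\cn(L_i)$ on $\Omega_*(W)_{D_*,D}$) for the stabilization. Your observation that the left-hand $\Phi$ should be $\Phi_{X,D_*,D}$ rather than $\Phi_{W,D_*,D}$ is also correct; this is a typo in the statement.
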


\begin{proof} This follows from remark~\ref{rem:DistLiftProps}, using the relations $\<\sR_*^{Dim}\>(X)_{D_*, D}$.
\end{proof}

\subsection{Lifting divisor classes}\label{subsec:LiftingDiv} We need some information on the distinguished lifting of divisor classes  before proving the main moving lemma. We fix pseudo-divisors $D_1,\ldots, D_n, D$ on $X$. 

Let $f:Y\to X$ be in $\sM(X)_{D_*}$.   We suppose  that $Y$ is irreducible,  $f(Y)\not\subset |D|$ and $f(Y)\not\subset |D_i|$ for $i=1,\ldots, n$.  Let $i:S\to Y$ be a smooth Cartier divisor on $Y$, in good position with respect to $D_*$. As in the proof of lemma~\ref{lem:DistinguishedLifting}, there is a blow-up $\tau:Y'\to Y$ with fundamental locus contained in $|\Div f^*D_1|\cap |\Div f^*D|$ such that $f\circ \tau:Y'\to X$ is in  $\sM(X)_{D_*, D}$. Blowing up further, we may assume that $\tau^*S$ is a simple normal crossing divisor on $Y'$, and is in good position with respect to $D_*, D$. Indeed, this is clearly the case after restriction to $Y'\setminus(|(f\circ\tau)^*D_1|\cap|(f\circ\tau)^*D|)$, and we need only blow $Y'$ up in smooth centers lying over $|(f\circ\tau)^*D_1|\cap|(f\circ\tau)^*D|$ so that $\tau^*S+\Div(f\circ\tau)^*D_1$ pulls back to a simple normal crossing divisor. 

We apply lemma~\ref{lem:DistinguishedLifting} with respect to the sequence of divisors 
\[
(E_1,\ldots, E_r, E_{r+1})=(\Div f^*D_1,\ldots, \Div f^*D_n, \Div f^*D)
\]
and the blow-up $Y'\to Y$,  forming a  projective birational morphism $\rho:W\to Y\times\P^1$  with $W\in \Sm_k$ satisfying the conditions (1)-(5) of that lemma, and such that the proper transform $\<Y\times0\>\subset W$ is isomorphic as $Y$-scheme to $Y'$. By 
part (C) of lemma~\ref{lem:DistinguishedLifting}, there is such a $W$ so that $\rho^*(S\times\P^1)$ is a simple normal crossing divisor on $W$, in good position with respect to $Y\times0, D_*, D$ and with 
$\rho^*(S\times\P^1+Y\times0)$ in good position with respect to $D_*$.

Let $\sE$ denote the exceptional divisor of $\rho$, $\<Y\times0\>$ the proper transform of $Y\times0$ and $\<S\times\P^1\>$ the proper transform of $S\times\P^1$.  Let $\tilde{S}=\rho^*(S\times\P^1)$, $\tilde{Y}=\rho^*(Y\times0)$ and let $i_S:|\tilde{S}|\to W$ be the inclusion.  Let $\bar\rho_S:|\rho_S^*(S\times0)|\to S$ be the map induced by $\rho_S$, and $\bar\rho:|\tilde Y|\to Y$ the map induced by $\rho$,

\begin{lemma}\label{lem:LiftingRelations} Let $q:|\tilde{Y}|\cap|\tilde{S}|\to X$ be the composition of the inclusion into $\tilde{Y}$ with $f\circ\bar\rho$. Then  \\[5pt]
(1) $f_*\bar\rho_*\left(\tilde{Y}(1_W^{Y\times0, D_*, D})_{D_*,D}\right)=\Phi_{X, D_*,D}([f:Y\to X]_{D_*})$.
\\[5pt]
(2) $q_*\left(i_S^*\tilde{Y}([\tilde{S}\to|\tilde S|]_{\tilde{Y}, D_*, D})_{D_*,D}\right)=\Phi_{X, D_*,D}(f_*([S\to Y]_{D_*}))$.
\end{lemma}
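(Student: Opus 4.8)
The plan is to prove (1) and (2) by relating the two sides to explicitly computed divisor classes on $W$, using that everything in sight is a divisor class on a scheme in $\Sm_k$, together with the descent and commutativity machinery already established. For (1), the key observation is that, since $\id_W$ is in $\sM(W)_{Y\times0, D_*, D}$ by the construction of $\rho$ in Lemma~\ref{lem:DistinguishedLifting}, the definition of the operation $(Y\times 0)(-)_{D_*, D}$ on the unit class gives $i_{\tilde Y*}(\tilde Y(1_W^{Y\times0, D_*, D})_{D_*,D})=[\tilde Y\to W]_{D_*, D}$, where $i_{\tilde Y}\colon|\tilde Y|\to W$ is the inclusion. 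Hence $f_*\bar\rho_*(\tilde Y(1_W^{Y\times0, D_*, D})_{D_*,D})=(f\circ p_1\circ\rho)_*([\tilde Y\to W]_{D_*, D})$, which by Remark~\ref{rem:DistLiftRewrite} (applied with the empty sequence of line bundles) is exactly the distinguished lifting $\Phi_{X, D_*, D}([f\colon Y\to X]_{D_*})$ associated to the morphism $\rho$; by Proposition~\ref{prop:DistLifting} this lifting is independent of the chosen $\rho$, giving~(1).

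For (2), the first step is to rewrite the left-hand side using Lemma~\ref{lem:ProjectionFormula} and Lemma~\ref{lem:DivClassCommutativity}(1): since $\rho^*(S\times\P^1)$ is in good position with respect to $Y\times0, D_*, D$ and no component of $\tilde S$ is contained in $|\tilde Y|$ (as $\rho^*(S\times\P^1+Y\times0)$ is a simple normal crossing divisor), Lemma~\ref{lem:DivClassCommutativity}(1) identifies $i_S^*\tilde Y([\tilde S\to|\tilde S|]_{\tilde Y, D_*, D})_{D_*,D}$, after pushing forward to $|\tilde Y|$, with $\cn(i_{\tilde Y}^*O_W(\tilde S))([\tilde Y\to|\tilde Y|]_{D_*, D})$ — equivalently, with the class of the divisor $\tilde S\cap\tilde Y$ on $\tilde Y$, transported via Lemma~\ref{lem:Additivity}. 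So the left side of (2) equals $(f\circ p_1\circ\rho)_*([\tilde S\cap\tilde Y\to W]_{D_*, D})$. The second step is to identify the right side: by Remark~\ref{rem:DistLiftRewrite} and Remark~\ref{rem:DistLiftProps}(2), $\Phi_{X, D_*,D}(f_*([S\to Y]_{D_*}))=f_*\Phi_{Y, D_*, D}([S\to Y]_{D_*})$, and one computes $\Phi_{Y, D_*, D}([S\to Y]_{D_*})$ using the morphism $\rho_S\colon W_S\to S\times\P^1$ obtained by restricting $\rho$ over $\<S\times\P^1\>$ — the point is that, because $S$ is a face-type divisor and $\rho^*(S\times\P^1)$ is in good position, the restriction of $\rho$ to the proper transform of $S\times\P^1$ is itself a legitimate choice of $\rho$ for the pair $(S\to Y, \text{divisors pulled back from }Y)$, so $\Phi_{Y, D_*, D}([S\to Y]_{D_*})=\bar\rho_{S*}(\rho_S^*(S\times 0)(1_{W_S}^{S\times0, D_*, D}))=[\tilde S\cap\tilde Y\to|\tilde S\cap\tilde Y|]_{D_*, D}$ (pushed into $Y$), again using the identification of the operation on the unit class with a divisor class. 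Pushing forward to $X$ via $f$ then matches the two sides.

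The main obstacle I expect is the bookkeeping in the second step of (2): one must check carefully that the restriction $\rho_S$ of $\rho$ to (the proper transform of) $S\times\P^1$ genuinely satisfies the conditions \eqref{eqn:DistingLiftingConditions} needed to compute the distinguished lifting of $[S\to Y]_{D_*}$ — that its fundamental locus sits inside the correct $|E_1|\cap|E_{r+1}|$ on $S$, that $\rho_S^*(S\times0)$ is in good position with respect to the pulled-back sequence, and that the smoothness and admissibility hypotheses transfer from $W$ to $W_S$. This is where Lemma~\ref{lem:GoodPosition} (especially parts (2), (4) and (6), governing how good position restricts to faces and smooth sub-divisors) and the explicit control over $\tilde S\cap\tilde Y$ built into the choice of $\rho$ via Lemma~\ref{lem:DistinguishedLifting}(C) do the work; once the restricted morphism is recognized as an admissible choice, Proposition~\ref{prop:DistLifting} guarantees the answer does not depend on it, and the identity falls out. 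The rest is a matter of chaining together the projection formula (Lemma~\ref{lem:ProjectionFormula}) and the compatibility of $\Phi$ with pushforward (Remark~\ref{rem:DistLiftProps}(2)).
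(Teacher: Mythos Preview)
Your argument for (1) is fine and matches the paper: by construction $\rho$ satisfies \eqref{eqn:DistingLiftingConditions}, so the left-hand side is by definition the distinguished lifting of $[f:Y\to X]_{D_*}$ associated to $\rho$.

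For (2), however, there is a genuine gap. Your claim that ``no component of $\tilde S$ is contained in $|\tilde Y|$ (as $\rho^*(S\times\P^1+Y\times0)$ is a simple normal crossing divisor)'' is false. The fundamental locus of $\rho$ lies in $|E_1|\times 0\subset Y\times 0$, so the exceptional divisor $\sE$ is supported in $|\tilde Y|$. When $S$ meets $|E_1|$, the pullback $\tilde S=\rho^*(S\times\P^1)$ acquires exceptional components: one has $\tilde S=\<S\times\P^1\>+A$ with $A>0$ supported in $|\sE|\subset|\tilde Y|$. (Being a simple normal crossing divisor does not prevent $\tilde S$ and $\tilde Y$ from sharing irreducible components.) Consequently Lemma~\ref{lem:DivClassCommutativity}(1) does not apply, and the expression ``the divisor $\tilde S\cap\tilde Y$ on $\tilde Y$'' is not well-defined. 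This is also why your two computations do not match: the right-hand side, computed via $W_S=\<S\times\P^1\>$, produces the class of $\rho_S^*(S\times 0)=\<S\times\P^1\>\cap\tilde Y$, not of $\tilde S\cap\tilde Y$.

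The paper handles this by first proving the reduction
\[
i_S^*\tilde{Y}([\tilde{S}\to|\tilde{S}|]_{\tilde{Y}, D_*,D})_{D_*,D}=i_S^*\tilde{Y}([\<S\times\P^1\>\to|\tilde{S}|]_{\tilde{Y}, D_*, D})_{D_*, D},
\]
arguing that the contribution from $A$ is $i^A_*\cn(i^{A*}O_W(\tilde Y))(\alpha)$ for some $\alpha$, and this vanishes because $O_W(\tilde Y)=\rho^*O_{Y\times\P^1}(Y\times 0)$ is \emph{trivial} in a neighborhood of $|A|$ (since $|A|\to Y\times\P^1$ factors through $Y\times 0$). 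Once $\tilde S$ is replaced by the proper transform $\<S\times\P^1\>$, your plan for the second step---verifying that $\rho_S:W_S\to S\times\P^1$ satisfies \eqref{eqn:DistingLiftingConditions} via Lemma~\ref{lem:GoodPosition}(6) and Remark~\ref{rem:GenPosition}---goes through essentially as you describe.
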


\begin{proof}  The assertion (1) follows from the fact that $\rho:W\to Y\times\P^1$ satisfies the conditions (1)-(4) of \eqref{eqn:DistingLiftingConditions}. 

For (2), we first claim that
\begin{equation}\label{eqn:PropTrans}
i_S^*\tilde{Y}([\tilde{S}\to|\tilde{S}|]_{\tilde{Y}, D_*,D})_{D_*,D}=i_S^*\tilde{Y}([\<S\times\P^1\>\to|\tilde{S}|]_{\tilde{Y}, D_*, D})_{D_*, D}
\end{equation}
in $\Omega_*(|\tilde{Y}|\cap|\tilde{S}|)_{D_*, D}$. Indeed, we may write the simple normal crossing divisor $\tilde{S}$ as
\[
\tilde{S} = \<S\times\P^1\> + A,
\]
where $A$ is an effective divisor, supported in $|\tilde{S}|\cap |\sE|$. Since the exceptional divisor $\sE$ is supported in $|\tilde{Y}|$,  $A$ is supported in $|\tilde{Y}|$. From the above decomposition of $\tilde{S}$, we have
\[
[\tilde{S}\to|\tilde{S}|]_{\tilde{Y}, D_*, D}=[\<S\times\P^1\>\to|\tilde{S}|]_{\tilde{Y}, D_*, D}+i^A_*\alpha,
\]
where $\alpha$ is a class in $\Omega_*(|A|)_{\tilde{Y}, D_*, D}$, and $i^A:|A|\to|\tilde{S}|$ is
the inclusion. Then
\begin{multline*}
i_S^*\tilde{Y}([\tilde{S}\to|\tilde{S}|]_{\tilde{Y}, D_*, D})_{D_*, D}\\=i_S^*\tilde{Y}([\<S\times\P^1\>\to|\tilde{S}|]_{\tilde{Y}, D_*, D})_{D_*, D}+
i^A_*\cn(i^{A*}O_W(\tilde{Y}))(\alpha).
\end{multline*}
But $\tilde{Y}=\rho^*(Y\times0)$, hence $O_W(\tilde{Y})\cong O_W$ in a neighborhood of $|A|$. Thus $\cn(i^{A*}O_W(\tilde{Y}))(\alpha)=0$, proving our claim.

We are thus reduced to showing that 
\[
q_*(i_S^*\tilde{Y}([\<S\times\P^1\>\to|\tilde{S}|]_{\tilde{Y}, D_*, D}))_{D_*, D}=\Phi_{X, D_*,D}(f_*([S\to Y]_{D_*})).
\]
We write $W_S$ for $\<S\times\P^1\>$; the restriction of $\rho$  defines a projective  morphism $\rho_S:W_S\to S\times\P^1$.

The fact that  $\rho^*(S\times\P^1)$ is a simple normal crossing divisor on $W$, in good position with respect to $Y\times0, D_*, D$ and with respect to $D_*$, implies that $\<S\times\P^1\>$ is a smooth Cartier divisor on $W$, in good position with respect to $Y\times0, D_*, D$ and with respect to $D_*$. By remark~\ref{rem:GenPosition}, $p_1\circ\rho_S:W_S\to S$ is in $\sM(S)_{D_*}$ and $\rho_S:W_S\to S\times\P^1$ is in $\sM(S\times\P^1)_{S\times0, D_*, D}$. Let $\<S\times0\>_S$ denote the proper transform of $S\times0\subset S\times\P^1$ to $W_S$ and let $S'\subset Y'$ denote the proper transform of $S$ to $Y'$. As $\<Y\times0\> \cong Y'$ as $Y$-schemes, it follows that $\<S\times0\>_S$ is isomorphic to $S'$ as $S$-schemes. Since $\tau^*(S)$ is a simple normal crossing divisor on $Y'$, and is in good position with respect to $D_*, D$, it follows again from remark~\ref{rem:GenPosition} that $S'\to S$ is in $\sM(S)_{D_*, D}$. Since $\rho^*(S\times\P^1+Y\times0)$ is in good position with respect to $D_*$,   lemma~\ref{lem:GoodPosition}(6) tells us that the divisor $\rho_S^*(S\times0)=\rho^*(Y\times0)\cap\<S\times\P^1\>$ on $W_S$ is in good position with respect to $D_*$. This shows that $\rho_S:W_S\to S\times\P^1$ satisfies the conditions (1)-(4) of \eqref{eqn:DistingLiftingConditions}, except possibly for the requirements about the support of the fundamental loci of $\<S\times0\>_S\to S$ and $W_S\to S\times\P^1$; these properties follow directly from the corresponding requirements for  $\<Y\times0\>\to Y$ and $W\to Y\times\P^1$. Thus, we may use $W_S$ to compute the distinguished lifting of $f_*([S\to Y]_{D_*})$, giving 
\begin{align*}
\Phi_{X,D_*, D}(f_*[S\to Y]_{D_*})&=\Phi_{X, D_*,D}((f\circ i)_*(1_S^{D_*}))\\
&=(f\circ i\circ\bar\rho_S)_*\left(\rho_S^*(S\times0)(1_{W_S}^{S\times0,D_*,D})_{D_*,D}\right)\\
&=(f\circ \bar\rho)_*\left(\rho^*(Y\times0)([\<S\times\P^1\>\to W]_{Y\times0, D_*, D})_{D_*, D}\right)\\
&=q_*\left(i_S^*\tilde{Y}([\<S\times\P^1\>\to|\tilde{S}|]_{\tilde{Y}, D_*, D})_{D_*, D}\right)\\
&=q_*\left(i_S^*\tilde{Y}([\tilde{S}\to|\tilde{S}|]_{\tilde{Y}, D_*, D})_{D_*, D}\right),
\end{align*}
as desired.
\end{proof}

\subsection{ The proof of the moving lemma} We are now ready to prove
the main result of this section.

\begin{theorem}\label{thm:Moving}
Let $X$ be a finite type $k$-scheme, and $D_1, \ldots, D_n, D$ 
pseudo-divisors on $X$. Then the forgetful  map
$\res_{D_*, D/D_*}:\Omega_*(X)_{D_*,D}\to\Omega_*(X)_{D_*}$ is an isomorphism.
\end{theorem}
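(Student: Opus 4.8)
The plan is to construct an explicit inverse to $\res_{D_*, D/D_*}$ using the distinguished lifting homomorphism $\Phi_{X, D_*, D}:\L_*\otimes\sZ_*(X)_{D_*}\to\Omega_*(X)_{D_*, D}$ of remark~\ref{rem:DistLiftProps}. The first step is to show that $\Phi_{X, D_*, D}$ descends to a well-defined homomorphism $\bar\Phi_{X, D_*, D}:\Omega_*(X)_{D_*}\to\Omega_*(X)_{D_*, D}$, i.e.\ that it kills the subgroups $\<\sR_*^{Dim}\>(X)_{D_*}$, $\<\sR_*^{Sect}\>(X)_{D_*}$ and $\<\sR_*^{FGL}\>(X)_{D_*}$. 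The dimension relations are handled by lemma~\ref{lem:ChernLifting} together with the compatibility of $\Phi$ with push-forward and Chern classes (remark~\ref{rem:DistLiftProps}(2),(3)), exactly as in the proof of that lemma. The formal group law relations follow the same way, using that $\Phi$ is $\L_*$-linear and commutes with $\cn$ and $f_*$, so that a formal group law expression in Chern classes is carried to the corresponding expression downstairs. The section relations are the substantive point: given a cobordism cycle $(f:Y\to X, L_1,\ldots, L_m)$ in $\sZ_*(X)_{D_*}$ with $m>0$ and $i:Z\to Y$ the zero-scheme of a transverse section of $L_m$ with $Z$ in good position with respect to $D_*$, we must show $\Phi_{X, D_*, D}$ sends $[f, L_1,\ldots, L_m] - [f\circ i, i^*L_1,\ldots, i^*L_{m-1}]$ to zero. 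Reducing via remark~\ref{rem:DistLiftProps}(2),(3) to the case $X = Y$, $f = \id_Y$ and $m = 1$ (stripping off $L_1,\ldots,L_{m-1}$ as Chern operators), this is precisely the content of lemma~\ref{lem:LiftingRelations}: part (1) computes $\Phi_{X, D_*, D}([Y; O_Y(Z)]_{D_*})$ and part (2) computes $\Phi_{X, D_*, D}([Z\to Y]_{D_*})$, and one checks these two distinguished liftings agree by applying lemma~\ref{lem:IntersectionFormula} to the lifted data on $W$, since $\tilde Y([\tilde S\to|\tilde S|]_{\tilde Y, D_*, D})_{D_*, D} = \cn(i_S^*O_W(\tilde S))([\tilde S\to|\tilde Y|]_{D_*,D})$ when $\tilde S$ is in good position.

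Once $\bar\Phi_{X, D_*, D}:\Omega_*(X)_{D_*}\to\Omega_*(X)_{D_*, D}$ is defined, the two composites are computed. For $\res_{D_*, D/D_*}\circ\bar\Phi_{X, D_*, D} = \id_{\Omega_*(X)_{D_*}}$: this is immediate from remark~\ref{rem:DistLiftProps}(1) (equivalently lemma~\ref{lem:DistLifting}), which says $\res_{D_*, D/D_*}\circ\Phi_{X, D_*, D} = \can$ already at the level of $\L_*\otimes\sZ_*(X)_{D_*}$, hence passes to the quotient. For $\bar\Phi_{X, D_*, D}\circ\res_{D_*, D/D_*} = \id_{\Omega_*(X)_{D_*, D}}$: it suffices to check this on generators $\eta\in\sZ_*(X)_{D_*, D}\subset\sZ_*(X)_{D_*}$, and the required identity $\bar\Phi_{X, D_*, D}(\res_{D_*, D/D_*}(\eta)) = \can(\eta)$ in $\Omega_*(X)_{D_*, D}$ is exactly lemma~\ref{lem:TrivDistLift}. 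Thus $\bar\Phi_{X, D_*, D}$ is a two-sided inverse of $\res_{D_*, D/D_*}$, proving the theorem. Iterating over the single pseudo-divisors $D_1,\ldots, D_n, D$ in turn then gives that $\res_{D_*/\0}:\Omega_*(X)_{D_*}\to\Omega_*(X)$ is an isomorphism, which is what is needed to define $D(-):\Omega_*(X)\to\Omega_{*-1}(|D|)$.

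The main obstacle is establishing that $\Phi_{X, D_*, D}$ respects the section relations, i.e.\ the reduction to and proof of the key case via lemmas~\ref{lem:LiftingRelations} and \ref{lem:IntersectionFormula}; this is where the geometry of the deformation diagram $\rho:W\to Y\times\P^1$ enters and where the good-position bookkeeping from lemma~\ref{lem:DistinguishedLifting}(C) and lemma~\ref{lem:GoodPosition} is genuinely used. The subtlety is that the distinguished lifting of $[Z\to Y]_{D_*}$ must be computed using a deformation diagram for $Z$ that is compatible with the one chosen for $Y$ — this is arranged by using the proper transform $\<S\times\P^1\>$ inside $W$ rather than an independently chosen resolution — and that the difference between $\tilde S$ and $\<S\times\P^1\>$ (supported on the exceptional locus, hence in $|\tilde Y| = |\rho^*(Y\times0)|$, where $O_W(\tilde Y)$ is trivial) contributes nothing. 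Verifying well-definedness of $\Phi$ itself, i.e.\ proposition~\ref{prop:DistLifting} (independence of the choice of $\rho$), rests in turn on the commutativity proposition~\ref{prop:Commutativity} and the linear-equivalence proposition~\ref{prop:LinearEquiv2} applied to a double deformation on $Y\times\P^1\times\P^1$; that proposition is the real technical heart and everything here is downstream of it.
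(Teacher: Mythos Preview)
Your overall architecture matches the paper's proof exactly: build the commutative triangle from lemma~\ref{lem:DistLifting} and lemma~\ref{lem:TrivDistLift}, then show $\Phi_{X,D_*,D}$ descends through $\<\sR^{Dim}_*\>$, $\<\sR^{Sect}_*\>$, $\<\sR^{FGL}_*\>$ to give the two-sided inverse $\bar\Phi$. The dimension and formal-group-law steps are exactly as in the paper.

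The one place where your account is imprecise is the $\<\sR^{Sect}_*\>$ step. Lemma~\ref{lem:LiftingRelations}(1) gives $\Phi(1_Y^{D_*})$, not $\Phi([Y;O_Y(S)]_{D_*})$; one passes from the former to the latter via $\cn$-compatibility (remark~\ref{rem:DistLiftProps}(3)). More importantly, the displayed identity you attribute to lemma~\ref{lem:IntersectionFormula} does not type-check: $\tilde S=\rho^*(S\times\P^1)$ is a simple normal crossing divisor, not a smooth one, so lemma~\ref{lem:IntersectionFormula} does not apply directly, and the expression $[\tilde S\to|\tilde Y|]_{D_*,D}$ is not meaningful since $|\tilde S|\not\subset|\tilde Y|$. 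The paper's actual move here is cleaner: since $\tilde S$ is in good position with respect to $Y\times0, D_*, D$, lemma~\ref{lem:DivClass} gives
\[
[\tilde S\to W]_{Y\times0, D_*, D}=\cn(O_W(\tilde S))(1_W^{Y\times0, D_*, D}),
\]
and then one simply commutes $\cn(O_W(\tilde S))$ past $\rho^*(Y\times0)(-)_{D_*,D}$ via lemma~\ref{lem:ChernClassComp2} and pushes forward, using $(p_1\circ\rho)^*O_Y(S)\cong O_W(\tilde S)$. With that correction your argument is complete and coincides with the paper's.
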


\begin{proof}  By lemma~\ref{lem:DistLifting} and lemma~\ref{lem:TrivDistLift}, we have a commutative diagram
\[
\xymatrix{
\L_*\otimes\sZ_*(X)_{D_*, D}\ar[r]^-{\can}\ar[d]_{\res_{D_*, D/D_*}}&\Omega_*(X)_{D_*,D}\ar[d]^{\res_{D_*, D/D_*}}\\
\L_*\otimes\sZ_*(X)_{D_*}\ar[ur]_{\Phi_{X, D_*,D}}\ar[r]_-{\can}&\Omega_*(X)_{D_*}.
}
\]

Since $\can:\L_*\otimes\sZ_*(X)_{D_*}\to\Omega_*(X)_{D_*}$ is surjective, the surjectivity of $\res_{D_*, D/D_*}$ follows. As $\can:\L_*\otimes\sZ_*(X)_{D_*,D}\to \Omega_*(X)_{D_*,D}$ is surjective, so is $\Phi_{X, D_*,D}$ and the injectivity of $\res_{D_*, D/D_*}$ will follow if we show that $\Phi_{X, D_*,D}$ descends to an $\L_*$-linear homomorphism $\bar{\Phi}_{X, D_*,D}:\Omega_*(X)_{D_*}\to\Omega_*(X)_{D_*, D}$.

First, we show that $\Phi:=\Phi_{X, D_*, D}$ descends to $\Phi_1:\L_*\otimes\uu{\sZ}_*(X)_{D_*}\to\Omega_*(X)_{D_*,D}$. As $\Phi$ is $\L_*$-linear and is compatible with 1st Chern class operators, we need only consider an element of the form $\eta:=(f:Y\to X, \pi^*L_1,\ldots, \pi^*L_m)$ of $\<\sR_*^{Dim}\>(X)_{D_*}$. Here $f:Y\to X$ is in $\sM(X)_{D_*}$, $\pi:Y\to Z$ is a smooth morphism to some $Z\in\Sm_k$, $L_1,\ldots, L_m$ are line bundles on $Z$, and $m>\dim_kZ$. By lemma~\ref{lem:ChernLifting}, 
\[
\Phi(\eta)=f_*\left(\cn(\pi^*L_1)\circ\cdots\circ\cn(\pi^*L_m)(\Phi_{Y, D_*, D}(1_Y^{D_*}))\right).
\]
The operator $\cn(\pi^*L_1)\circ\ldots\circ\cn(\pi^*L_m)$ is zero on $\Omega_*(Y)_{D_*,D}$ by the relations $\<\sR_*^{Sect}\>(Y)_{D_*, D}$, so  $\Phi(\eta)=0$, as desired.

Next, we check that $\Phi_1$ descends to $\Phi_2:\L_*\otimes\uu{\Om}_*(X)_{D_*}\to\Omega_*(X)_{D_*, D}$. Since $\Phi$ is $\L_*$-linear,  intertwines the operators $\cn(L)$ on $\sZ_*(X)_{D_*}$ and on $\Om(X)_{D_*, D}$, and is compatible with pushforward, it suffices to check that $\Phi_1$ vanishes on elements of the form
\[
f_*(\cn(O_Y(S))(1_Y^{D_*})-[i:S\to Y]_{D_*}),
\]
for $f:Y\to X$ in $\sM(X)_{D_*}$, and $i:S\to Y$ the inclusion of a smooth divisor in good position with respect to $D_*$. 

By lemma~\ref{lem:ChernLifting}, 
\[
\cn(O_Y(S))(\Phi_{Y, D_*, D}(1_Y^{D_*}))=\Phi_{Y, D_*, D}(\cn(O_Y(S))(1^{D_*}_Y)).
\]

On the other hand, let $\rho:W\to Y\times\P^1$ be as constructed at the beginning of \S\ref{subsec:LiftingDiv} for the pair $(Y, S)$. Retaining the notation from that section, the Cartier divisor $\rho^*(S\times\P^1)$ is in good position with respect to $Y\times0, D_*, D$ and $[\rho:W\to Y\times \P^1]$ is in $\sM(Y\times\P^1)_{Y\times0, D_*, D}$, so by lemma~\ref{lem:DivClass} 
\[
[\rho^*(S\times\P^1)\to W]_{Y\times0, D_*, D}=\cn(\sO_W(\rho^*(S\times\P^1)))(1_W^{Y\times0, D_*, D}).
\]
Therefore, with $\bar\rho:|\rho^*(Y\times0)\to Y$ the map induced by $\rho$, we have
\begin{align*}
\Phi_{Y, D_*, D}(&[S\to Y]_{D_*})\\&=(\bar\rho)_*(\rho^*(Y\times0)([\rho^*(S\times\P^1)\to W]_{Y\times0, D_*, D})_{D_*, D})\\
&\hskip200pt \text{by lemma~\ref{lem:LiftingRelations}}\\
&=(\bar\rho)_*(\rho^*(Y\times0)(\cn(\sO_W(\rho^*(S\times\P^1)))(1_W^{Y\times0, D_*, D})))\\
&=\cn(O_Y(S))(\bar\rho_*(\rho^*(Y\times0)(1_W^{Y\times0, D_*, D})))\\
&=\cn(O_Y(S))(\Phi_{Y, D_*,D}(1_Y^{D_*})).
\end{align*}
Pushing forward to $X$ gives the desired identity.

Finally, we check that $\Phi_2$ descends to $\bar\Phi:\Omega_*(X)_{D_*}\to\Omega_*(X)_{D_*, D}$.  As above, it suffices to show that
\[
\Phi_2\left(f_*(F_\L(\cn(L),\cn(M))(1_Y^{D_*}) -\cn(L\otimes M)(1_Y^{D_*}))\right)=0
\]
for each $f:Y\to X$ in $\sM(X)_{D_*}$ and each pair of line bundles $L, M$ on $Y$, where $F_\L$ is the universal formal group law.  But 
\begin{align*}
\Phi_{Y, D_*, D}&(F_\L(\cn(L),\cn(M))(1_Y^{D_*}) -\cn(L\otimes M)(1_Y^{D_*}))\\
&=(F_\L(\cn(L),\cn(M))-\cn(L\otimes M))(\Phi_{Y, D_*, D}(1_Y^{D_*}))\\
&=0,
\end{align*}
using the relations $\<\sR_*^{FGL}\>(Y)_{D_*, D}$ in $\Omega_*(Y)_{D_*, D}$. Pushing forward to $X$ gives the desired identity. 

This  completes the descent and the proof of the theorem.
\end{proof}

\section{The intersection map}\label{sec:IntersectionMap} We are now in a position to define the intersection map and describe its main properties.

\begin{definition} Let $X$ be in $\Sch_k$, $D$ a pseudo-divisor on $X$. The {\em intersection map}
\begin{equation}\label{eqn:Intersection}
D(-):\Omega_*(X)\to \Omega_{*-1}(|D|)
\end{equation}
is defined as the composition
\[
\Omega_*(X)\xrightarrow{\res_{D/\0}^{-1}}\Omega_*(X)_D\xrightarrow{D(-)_D}\Omega_{*-1}(|D|);
\]
the map $\res_{D/\0}:\Omega_*(X)_D\to \Omega_*(X)$ being an isomorphism by theorem~\ref{thm:Moving}.
\end{definition}

As an aid to verifying properties of the intersection map, we prove the following technical result.

\begin{lemma} \label{lem:Generators} Take $X\in \Sch_k$ with pseudo-divisors $D_1, D_2$. Then the canonical map
\[
\L_*\otimes(\sZ_*(X)_{D_1, D_2}\cap\sZ_*(X)_{D_2, D_1}\cap \sZ_*(X)_{D_1+D_2})\to \Omega_*(X)
\]
is surjective. 
\end{lemma}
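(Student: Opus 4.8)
The plan is to reduce the statement to showing that every cobordism cycle generating $\Omega_*(X)$ can be replaced, up to the relations defining $\Omega_*$, by one lying in the triple intersection $\sZ_*(X)_{D_1,D_2}\cap\sZ_*(X)_{D_2,D_1}\cap\sZ_*(X)_{D_1+D_2}$. Since $\Omega_*(X)=\Omega_*(X)_\0$ and $\can\colon\L_*\otimes\sZ_*(X)\to\Omega_*(X)$ is surjective, it is enough to lift an arbitrary generator $[f\colon Y\to X,L_1,\dots,L_m]$ (with $Y$ irreducible) to an element of the triple intersection that maps to the same class in $\Omega_*(X)$. As usual, by factoring through Chern class operators we reduce to the case $m=0$, i.e.\ to a single $f\colon Y\to X$ in $\sM(X)$, and by the blow-up/deformation machinery of \S\ref{SubSec:DistinguishedLifting} it suffices to treat $f=\id_Y$ with $Y\in\Sm_k$ irreducible.

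The core of the argument is then a resolution-of-singularities step: starting from $\id_Y$, I would produce a projective birational morphism $\tau\colon Y'\to Y$ with $Y'\in\Sm_k$ such that the pulled-back pseudo-divisors become simultaneously ``nice''. Concretely, writing $D_i'=\Div\tau^*D_i$ when $\tau(Y')\not\subset|D_i|$ (and noting $\tau^*(D_1+D_2)$ has $\Div\tau^*(D_1+D_2)=D_1'+D_2'$ in that case), one wants $D_1'+D_2'$ to be a simple normal crossing divisor on $Y'$ \emph{and} the scheme-theoretic intersection $D_1'\cap D_2'$ to be a Cartier divisor (or empty) on $Y'$. A single application of resolution of singularities to $Y$ along (a scheme structure on) $|D_1|\cup|D_2|$ followed by a further resolution making $\tau^*D_1+\tau^*D_2$ an snc divisor achieves the snc condition; the condition that $D_1'\cap D_2'$ be Cartier can be arranged by additionally blowing up along $\tau^*(D_1\cap D_2)$, exactly as in the construction of $Y'$ inside the proof of lemma~\ref{lem:DistinguishedLifting}. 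Once $D_1'+D_2'$ is snc with $D_1'\cap D_2'$ Cartier, $\id_{Y'}$ is automatically admissible for the sequences $D_1',D_2'$, for $D_2',D_1'$, and for $D_1'+D_2'$ simultaneously — this is just the definition~\ref{def:AdmissibleMaps}(ii) unwound via remark (4) — so $[f\circ\tau\colon Y'\to X]$ lies in all three groups $\sZ_*(X)_{D_1,D_2}$, $\sZ_*(X)_{D_2,D_1}$, $\sZ_*(X)_{D_1+D_2}$ at once, hence in their intersection (using remark~\ref{rem:Injective} to make sense of the intersection).

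It then remains to check that the class of $[f\circ\tau\colon Y'\to X]$ in $\Omega_*(X)$ equals that of $[f\colon Y\to X]$. This is a standard deformation-to-the-normal-bundle argument, identical to the one used implicitly throughout \cite[\S2--3]{CobordismBook} and in lemma~\ref{lem:DistLifting} above: the blow-up $T\to Y\times\P^1$ along the center (times $0$) provides a cobordism between $\tau\colon Y'\to Y\to X$ (the fiber over $0$, its main component being $Y'$ after discarding the exceptional part, which contributes classes killed by the relations since the relevant line bundles become trivial) and $\id_Y\colon Y\to Y\to X$ (the fiber over $1$). More precisely one writes $[\rho^*(Y\times 0)\to X]=[\rho^*(Y\times 1)\to X]$ in $\Omega_*(X)$ by lemma~\ref{lem:DivClass}, identifies the right-hand side with $[f\colon Y\to X]$, and expands the left-hand side, noting that the exceptional components are of the shape $(\text{face},\dots,\triv,\dots)$ and vanish by $\<\sR_*^{Sect}\>$ — leaving exactly $[f\circ\tau\colon Y'\to X]$.

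\textbf{Main obstacle.}
The delicate point I expect is arranging, in one birational model, \emph{all three} admissibility conditions simultaneously together with the snc condition — in particular that $\Div\tau^*D_1\cap\Div\tau^*D_2$ is literally a Cartier divisor on $Y'$ and not merely a closed subscheme of the expected codimension; this is why one cannot simply resolve to snc but must follow the two-step blow-up recipe (first along $\tau^*(D_1\cap D_2)$, then resolve) used in lemma~\ref{lem:DistinguishedLifting}. Everything else — the reduction to $\id_Y$ via Chern operators and the $\P^1$-deformation, and the bookkeeping that the exceptional contributions die in $\Omega_*$ — is routine given the results already established.
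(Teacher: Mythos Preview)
Your overall architecture --- resolve to a nice model $Y'$, then use a $\P^1$-deformation to connect back to $Y$ --- matches the paper's. But there is a genuine gap in the final step: the exceptional contributions to $[\rho^*(Y\times0)\to Y]$ do \emph{not} vanish. Your claim that they are ``of the shape $(\text{face},\dots,\triv,\dots)$'' is wrong: in the divisor-class formula the terms indexed by faces inside the exceptional locus $\sE$ carry the line bundles $O_W(E_i)$ restricted to those faces, and these are not trivial. Concretely, for the blow-up of a smooth surface $Y$ at a point $p$, pushing $[\rho^*(Y\times0)\to|\cdot|]$ to $Y$ gives $1_Y=[Y'\to Y]+i_{p*}[\P^2]+(\text{correction on }\P^1)$, and $[\P^2]\neq0$ in $\L_2$. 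Equivalently, pushing further to $\Spec k$ would give $[Y']=[Y]$ in $\Omega_*(k)$, which fails because Chern numbers change under blow-up. So you cannot hope to exhibit a \emph{single} map $Y'\to X$ in the triple intersection representing $[f]$.

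The fix, which is what the paper does, is to abandon the attempt to isolate $[Y'\to X]$ and instead keep the \emph{entire} divisor class $\un{[\rho^*(Y\times0)\to Y]}$ as your lift. The point is then to construct $\rho:W\to Y\times\P^1$ carefully enough that $\rho$ lies in $\sM(Y\times\P^1)_{Y\times0,D_1,D_2}\cap\sM(Y\times\P^1)_{Y\times0,D_1+D_2}$ (and $\rho^*(Y\times0)$ is in good position for $D_1,D_2$). Then every face of $\rho^*(Y\times0)$, exceptional or not, is admissible for both $D_1,D_2$ and $D_1+D_2$, so the full class $\un{[\rho^*(Y\times0)\to Y]}$ already sits in $\L_*\otimes(\un\sZ_*(Y)_{D_1,D_2}\cap\un\sZ_*(Y)_{D_1+D_2})$. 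Lemma~\ref{lem:DivClass} gives $[\rho^*(Y\times0)\to Y]_{D_1,D_2}=[\rho^*(Y\times1)\to Y]_{D_1,D_2}=1_Y$ as you say, and you are done. Two further simplifications in the paper are worth noting: first, it starts from $f\in\sM(X)_{D_1,D_2}$ rather than arbitrary $f$ (legitimate since theorem~\ref{thm:Moving} is already available), so $D_1$ is already snc and $D_1\cap D_2$ already Cartier on $Y$; second, it observes that admissibility for $D_1,D_2$ together with admissibility for $D_1+D_2$ automatically gives admissibility for $D_2,D_1$, so only two of the three conditions need to be engineered.
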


\begin{proof} As the canonical maps $\L_*\otimes\sZ_*(X)_{D_1, D_2}\to \Omega_*(X)_{D_1, D_2}$
and $\L_*\otimes\sZ_*(X)\to \Omega_*(X)$ are surjective, it follows from  lemma~\ref{lem:TrivDistLift} that  $\res\circ\can:\L_*\otimes\sZ_*(X)_{D_1, D_2}\to \Omega_*(X)$ is surjective.  To prove the result, we need only show that  for $f:Y\to X$ in $\sM(X)_{D_1, D_2}$,  $\res_{D_1, D_2/\0}(f)$ is in the image of 
$\L_*\otimes(\sZ_*(X)_{D_1, D_2}\cap\sZ_*(X)_{D_2, D_1}\cap \sZ_*(X)_{D_1+D_2})\to \Omega_*(X)$. 

If either $f(Y)\subset |D_1|$ or $f(Y)\subset |D_2|$, then $f$ is already in the intersection  $\sM(X)_{D_1,D_2}\cap\sM(X)_{D_2,D_1}\cap \sM(X)_{D_1+D_2}$, so we may assume that $f^*D_i$ is a Cartier divisor on $Y$ for $i=1,2$. Changing notation, we may replace $X$ with $Y$ and $D_1, D_2$ with effective Cartier divisors on $Y$, so that $\id_Y$ is in $\sM(Y)_{D_1, D_2}$. Thus $D_1$ is a simple normal crossing divisor on $Y$ and $D_1\cap D_2$ is a Cartier divisor on $Y$. If $g:Z\to Y$ is in $\sM(Y)_{D_1, D_2}\cap \sM(X)_{D_1+D_2}$, then automatically $g$ is in $\sM(Y)_{D_2, D_1}$, as, in case $g(Z)\not\subset |D_1|\cup|D_2|$, then $g^*(D_1+D_2)$ is a simple normal crossing divisors on $Z$, so $g^*D_2$ is a simple normal crossing divisor, and as $g^*(D_1)\cap g^*(D_2)$ is a Cartier divisor on $Z$, $g$ is in $\sM(Y)_{D_2, D_1}$. Thus we need only show that $\id_Y$ is in the image of $\L_*\otimes(\sZ_*(Y)_{D_1, D_2}\cap \sZ_*(Y)_{D_1+D_2})\to \Omega_*(Y)$.

Let $\tau:Y'\to Y$ be a blowup of $Y$, so that $Y'$ is in $\Sm_k$ and $\tau^*(D_1+D_2)$ is a simple normal crossing divisor on $Y$; this can be accomplished with the blowup of a closed subscheme $Z_0$ of $Y$ supported in $|D_2|$. As $D_1\cap D_2$ is a Cartier divisor on $Y$, the same holds for $\tau^*D_1\cap \tau^*D_2$, and clearly $\tau^*D_1$ and $\tau^*D_2$ are simple normal crossing divisors on $Y'$. Thus $Y'\to Y$ is in $\sM(Y)_{D_1, D_2}\cap \sM(Y)_{D_1+D_2}$, $\tau^*D_2$ is in good position on $Y'$ with respect to $D_1, D_2$ and $\tau^*D_1$ is in good position on $Y'$ with respect to $D_2, D_1$. 

Let $Z\subset Y$ be the subscheme with ideal sheaf $\sI_Z:=\sI_{Z_0}\cdot\sI_{D_1}\cdot \sI_{D_2}$ and let $\rho_1:W_1\to Y\times\P^1$ be the blowup of $Y\times\P^1$ along $Z\times0$. As in the proof of lemma~\ref{lem:DistinguishedLifting}, we let $\<Y\times0\>_1$, $\<Z\times\P^1\>_1$ be respective proper transforms of $Y\times0$, $Z\times\P^1$, $\sE_1$ the exceptional divisor of $\rho_1$, $W_1^0:=W_1\setminus|\<Z\times\P^1\>_1|$ and $U_1:=W_1\setminus(|\sE_1|\cap |\<Z\times\P^1\>_1|$. Just as in the proof of  lemma~\ref{lem:DistinguishedLifting},  $W_1^0$ is isomorphic to the line bundle $O(1)\to Y'$ (as $Y$-schemes), with the zero section of $O(1)$ going over to $\<Y\times0\>_1\subset W_1^0$ and with  $O(1)_{|D_1\cup D_2}$ going over to $\sE_1\cap W_1^0$. Thus  $\<Y\times0\>_1$ is isomorphic to $Y'$,
\[
\rho_1^*(Y\times0)\cap U_1=\rho_1^*(Y\times0)\cap W^0_1=\sE_1\cap W_1^0+\<Y\times0\>_1
\]
 is a simple normal crossing divisor  on $U_1$ and $U_1$ is smooth over $k$. Furthermore,
\[
\rho_1^*(Y\times0)\cap \rho_1^*p_1^*(D_1+D_2)\cap U_1=\rho_1^*(Y\times0)\cap \rho_1^*p_1^*(D_1+D_2)\cap W^0_1=\sE_1\cap W_1^0
\]
is a Cartier divisor. Thus  $\id_{U_1}$ is in $\sM(U_1)_{D_1, D_2}\cap \sM(U_1)_{Y\times0, D_1, D_2}\cap \sM(U_1)_{Y\times0, D_1+D_2}$. Thus, we may blow up $W_1$ in a closed subscheme supported in $W_1\setminus U_1$, $\psi:W\to W_1$, so that  $W$ is smooth over $k$,   $\rho:=\rho_1\circ \psi$ is in   
\[
\sM(Y\times\P^1)_{Y\times0, D_1, D_2}\cap \sM(Y\times\P^1)_{Y\times0, D_1+D_2}\cap \sM(Y\times\P^1)_{D_1, D_2}
\]
and $\rho^*(Y\times0)+\rho^*(D_1\times\P^1)$ is a simple normal crossing divisor on $W$. Thus $\rho^*(Y\times0)$ is in good position with respect to $D_1, D2$; clearly $\rho^*(Y\times1)$ is in good position with respect to $D_1, D_2$.

As $\rho^*(Y\times0)$ is in good position with respect to $D_1, D_2$, lemma~\ref{lem:DivClass} gives the identity
\[
[\rho^*(Y\times0)\to Y\times\P^1]_{D_1, D_2}=\rho_*(\cn(\rho^*O_{Y\times\P^1}(Y\times0))(1_W^{D_1, D_2}))
\]
in $\Omega_*(Y\times\P^1)_{D_1, D_2}$. As the smooth divisor $\rho^*(Y\times1)$ on $W$ is in good position with respect to $D_1, D_2$, the relations $\<\sR_*^{Sect}\>(W)_{D_1, D_2}$ give the identity 
\[
[\rho^*(Y\times1)\to Y\times\P^1]_{D_1, D_2})=\rho_*(\cn(O_W(\rho^*(Y\times1))(1_W^{D_1, D_2}))
\]
in $\Omega_*(Y\times\P^1)_{D_1, D_2}$. Since $O_W(\rho^*(Y\times1))\cong O_W(\rho^*(Y\times0))$ and the smooth divisor $\rho^*(Y\times1)$ is as a $Y$-scheme isomorphic to $Y$,  we may push forward to $Y$, giving
\[
[\rho^*(Y\times0)\to Y]_{D_1, D_2}= [\rho^*(Y\times1)\to Y]_{D_1, D_2}=1_Y^{D_1, D_2}
\]
in $\Omega_*(Y)_{D_1, D_2}\cong\Omega_*(Y)$. 

On the other hand,  as $\rho$ is also in $\sM(Y)_{Y\times0, D_1+D_2}$, and 
\[
\un{Y\times0}\left(\un{[\rho:W\to Y\times\P^1]}_{Y\times0, D_1+D_2}\right)_{D_1+D_2}=\un{[\rho^*(Y\times0)\to Y\times\P^1]}_{D_1+D_2}
\]
in $\L_*\otimes\un{\sZ}_*(Y\times\P^1)_{D_1+D_2}$. Pushing forward to $Y$, the two divisor classes 
$\un{[\rho^*(Y\times0)\to Y]}_{D_1+D_2}$ and $\un{[\rho^*(Y\times0)\to Y]}_{D_1,D_2}$ are equal in $\L_*\otimes\un{\sZ}_*(Y)$,  thus 
$\un{[\rho^*(Y\times0)\to Y]}_{D_1, D_2}$ is in 
\[
\L_*\otimes(\un{\sZ}_*(Y)_{D_1+D_2}\cap \un{\sZ}_*(Y)_{D_1,D_2})\subset \L_*\otimes\un{\sZ}_*(Y)
\]
and $\id_Y$ is in the image of $\L_*\otimes(\sZ_*(Y)_{D_1, D_2}\cap \sZ_*(Y)_{D_1+D_2})\to \Omega_*(Y)$, as desired.
\end{proof}

\begin{proposition}\label{prop:IntersectionProperties} Let $D$ be a pseudo-divisor on an $X\in\Sch_k$. 
The intersection map \eqref{eqn:Intersection} has the following properties. 
\\
(1) The map $D(-)$ is $\L_*$-linear.\\
(2) Let $f:X'\to X$ be a projective morphism, $f_D:|f^*D|\to |D|$ the restriction of $f$. Then $f_*\circ (f^*D)(-)=D(-)\circ f_*$.\\
(3) Let $f:X'\to X$ be a smooth quasi-projective morphism, $f_D:|f^*D|\to |D|$ the restriction of $f$. Then $f_D^*\circ D(-)=(f^*D)(-)\circ f^*$.\\
(4) Let $L$ be a line bundle on $X$. Then $\cn(L)\circ D(-)=D(-)\circ \cn(L)$.\\
(5) Take $X'\in \Sch_k$, and let $p:X'\times X\to X$ be the projection. Then for $\eta'\in \Omega_*(X)$, $\eta'\in \Omega_*(X')$, we have 
\[
(p^*D)(\eta'\times \eta)=\eta'\times D(\eta)
\]
in $\Omega_*(X'\times|D|)$.\\
(6) Let $D'$ be a second pseudo-divisor. Then the two maps 
\[
D(-)\circ D'(-), D'(-)\circ D(-):\Omega_*(X)\to \Omega_{*-2}(|D|\cap|D'|)
\]
are equal.\\
(7) Let $D'$ be a second pseudo-divisor. Suppose that $O_X(D)\cong O_X(D')$ and let $i:|D|\to X$, $i':|D'|\to X$ be the inclusions. Then 
\[
i_*\circ D(-)=i'_*\circ D'(-):\Omega_*(X)\to \Omega_{*-1}(X).
\]
\end{proposition}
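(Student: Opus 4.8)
## Proof Proposal for Proposition~\ref{prop:IntersectionProperties}

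The plan is to prove each of the seven properties by reducing, via Theorem~\ref{thm:Moving}, to an already-established statement about the refined intersection map $D(-)_{D_*}$ on the groups $\Omega_*(X)_{D_*}$, where the subscript sequence is chosen large enough that all the operators in sight are simultaneously defined. The general strategy for each item is the same: express both sides as the image under a composition of forgetful isomorphisms of a statement living in a refined cobordism group, then quote the corresponding refined identity proved in Sections~\ref{sec:Intersection}--\ref{sec:Relations}.

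First I would dispose of properties (1)--(5), which are the ``naturality'' items. For (1), $\L_*$-linearity is immediate since $D(-)_D$ is $\L_*$-linear by construction and $\res_{D/\0}^{-1}$ is $\L_*$-linear by Theorem~\ref{thm:Moving}. For (2), (3), (4), (5) one first checks that the forgetful maps $\res$ commute with $f_*$, $f^*$, $\cn(L)$, and external products (this is recorded right after Definition~\ref{Def:tildeOmegaD}), hence so do their inverses; then one invokes Lemma~\ref{lem:ProjectionFormula}, the smoothness part of Lemma~\ref{lem:ProjectionFormula}, Lemma~\ref{lem:ChernClassComp} (and its power-series extension, together with Lemma~\ref{lem:ChernClassComp2}), and the fact that $\un C(-)$ is built from $\cn$ and pushforward, respectively, all of which descend to $\Omega_*(-)_{D_*}$ by Section~\ref{sec:Descent}. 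A small point for (2) is that one must be working inside $\Omega_*(X')_{f^*D}$ and use that $\res_{f^*D/\0}$ is an isomorphism on $X'$ as well, which Theorem~\ref{thm:Moving} also provides.

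The substantive items are (6) and (7), and here is where Lemma~\ref{lem:Generators} is essential. For (6): given $\eta\in\Omega_*(X)$, by Lemma~\ref{lem:Generators} (applied with $D_1=D$, $D_2=D'$) we may represent $\res^{-1}$-images of $\eta$ by a cycle $z\in\L_*\otimes\bigl(\sZ_*(X)_{D,D'}\cap\sZ_*(X)_{D',D}\cap\sZ_*(X)_{D+D'}\bigr)$. Then $D'(D(\eta))$ and $D(D'(\eta))$ are both computed from $z$, the first as $D'(D(z)_{D'})_{}$ landing in $\Omega_{*-2}(|D|\cap|D'|)$ after applying the appropriate forgetful inverses on $|D|$ and $|D|\cap|D'|$, and symmetrically for the second; the equality of the two then reduces, after pushing the computation of $D(z)$ down to a generator $f:Y\to X$ in $\sM(X)_{D,D'}$, to Proposition~\ref{prop:Commutativity} applied on $Y$ with $C=\Div f^*D$, $C'=\Div f^*D'$ (and suitably truncated Chern-class series, controlled by Lemma~\ref{lem:WeakDim}). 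One must be slightly careful that the forgetful isomorphisms on $|D|$, $|D'|$ and $|D|\cap|D'|$ are compatible with the pushforwards appearing in the two composites; this compatibility is exactly what makes the common refinement group $\sZ_*(X)_{D+D'}$ (rather than the $D|D'$ groups of \cite{CobordismBook}) do its job. For (7): again use Lemma~\ref{lem:Generators} with $D_1=D$, $D_2=D'$ to pick a representing cycle in the triple intersection, reduce to a generator, and on $Y$ apply Proposition~\ref{prop:LinearEquiv2} (with $B$ trivial, or rather with the relevant divisor playing the role of $B$) to identify $i_*D(-)$ with $i'_*D'(-)$ via the isomorphism $O_X(D)\cong O_X(D')$ pulled back to $Y$; the $D_1+D_2$-refinement guarantees the linear-equivalence argument can be run admissibly.

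The main obstacle I anticipate is the bookkeeping in (6): keeping track of which refined group each intermediate class lives in, and verifying that the two nested intersection operations, each of which a priori requires a different refinement subscript, can be performed on a single common representative. Concretely, to make sense of $D'(D(\eta))$ one wants $D(\eta)$ to already lie in (the image of) $\Omega_*(|D|)_{D'}$, and symmetrically; Lemma~\ref{lem:Generators} supplies a cycle in $\sZ_*(X)_{D,D'}\cap\sZ_*(X)_{D',D}$ precisely so that both orders make sense from the same starting data, but one still has to check that applying $\res^{-1}$ at each stage does not disturb the other order's computation — i.e. that the distinguished-lifting / forgetful-isomorphism machinery is genuinely symmetric in $D$ and $D'$ on such cycles. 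Once that is pinned down, Proposition~\ref{prop:Commutativity} closes the argument. The remaining work in (1)--(5) and (7) is routine diagram-chasing of the kind already carried out in Section~\ref{sec:Descent}.
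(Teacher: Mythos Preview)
Your strategy for (1)--(5) and (6) is essentially the paper's: reduce via Theorem~\ref{thm:Moving} and Lemma~\ref{lem:Generators} to a generator $f:Y\to X$ with $Y$ irreducible in $\Sm_k$ and $\id_Y$ in the relevant intersection of refined monoids, then invoke the appropriate refined identity. For (6) the paper does exactly this, finishing with Proposition~\ref{prop:Commutativity} in the case where both $f^*D$ and $f^*D'$ are honest Cartier divisors on $Y$; note however that you must also treat separately the degenerate cases $|f^*D|=Y$ or $|f^*D'|=Y$, where one or both intersections reduce to Chern class operators and the commutativity is immediate. This is easy but should be mentioned, since Proposition~\ref{prop:Commutativity} literally requires $D$ and $D'$ to be effective Cartier divisors on $T$.

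The real problem is your treatment of (7). Proposition~\ref{prop:LinearEquiv2} is the wrong tool: its hypotheses demand a very specific deformation picture (an open $V\subset T$ fibered in curves over $W$ with a section cutting out $D_1$), which is nowhere available in the situation of (7), and its conclusion compares $D_0$ applied to a divisor class $[B\to|B|]$ with a naive intersection $B\cap D_1$, not $D_0(-)$ with $D_1(-)$. Taking ``$B$ trivial'' yields nothing. The correct finishing step, once you have reduced to $\id_Y$ with $Y$ smooth irreducible and $\id_Y\in\sM(Y)_D\cap\sM(Y)_{D'}$, is Lemma~\ref{lem:DivClass}: if $|D|\neq Y$ then $\Div D$ is a simple normal crossing divisor in good position with respect to $\0$, so $i_*(D(1_Y))=[\Div D\to Y]=\cn(O_Y(D))(1_Y)$, and likewise for $D'$; since $O_Y(D)\cong O_Y(D')$ the two agree. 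The cases where one or both of $|D|,|D'|$ equal $Y$ are handled by the definition of the intersection as a Chern class. So your reduction is right, but the lemma you name does not apply; replace it with Lemma~\ref{lem:DivClass}.
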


\begin{proof} The properties (1)-(4) follow directly from lemma~\ref{lem:ProjectionFormula}, lemma~\ref{lem:ChernClassComp} and the definition of the map $D(-)$. For (5), we use (1)-(4) to reduce to the case $X, X'\in \Sm_k$,  $\eta=\id_X$, $\eta'=\id_{X'}$; in particular, $p:X'\times X\to X$ is smooth and quasi-projective. As $1_{X'}\times 1_X=1_{X'\times X}=p^*(1_X)$, we have
\[
p^*(D)(1_{X'}\times 1_X)=p^*(D)(p^*(1_X))=p_D^*(D(1_X))=1_{X'}\times D(1_X),
\]
the second equality following from (3).

For (6), we use lemma~\ref{lem:Generators} to reduce to showing that
\[
D(D'(\eta))=D'(D(\eta))
\]
for $\eta\in \L_*\otimes(\sZ_*(X)_{D, D'}\cap\sZ_*(X)_{D', D}\cap \sZ_*(X)_{D+D'})$. Since the maps $D(-), D'(-)$ are $\L_*$-linear and compatible with 1st Chern class operators, we reduce to showing $D(D'([f:Y\to X]))=D'(D([f:Y\to X])$ for $f\in 
\sM_*(X)_{D, D'}\cap\sM_*(X)_{D', D}\cap \sM_*(X)_{D+D'}$. As the intersection maps are compatible with $f_*$, we  reduce to  the case $X=Y$ and $f=\id_Y$. We may assume that $Y$ is irreducible. Since $\id_Y$ is in $\sM_*(X)_{D, D'}\cap\sM_*(X)_{D', D}$, it is not necessary to apply the inverse of any of the forgetful maps to compute the intersection maps.

If $|D|=|D'|=Y$, then
\begin{multline*}
D(D'(1_Y^{D, D'}))=\cn(O_Y(D))(\cn(O_Y(D'))(1_Y))\\=\cn(O_Y(D'))(\cn(O_Y(D))(1_Y))=D'(D(1_Y^{D', D})).
\end{multline*}
If $|D|=Y$, but $|D'|\neq Y$, then 
\begin{multline*}
D(D'(1_Y^{D, D'}))=\cn(O_Y(D))(D'(1_Y^{D, D'}))\\=D'(\cn(O_Y(D))(1^{D'}_Y))=D'(D(1_Y^{D, D'})).
\end{multline*}
By symmetry, we have the desired identity in case $|D'|=Y$, but $|D|\neq Y$ as well.

Suppose both $D$ and $D'$ are Cartier divisors on $Y$. Since $\id_Y$ is in $\sM_*(Y)_{D+D'}\cap \sM(Y)_{D,D'}\cap\sM(Y)_{D',D}$,  $D+D'$ is 
simple normal crossing divisor. We may then apply proposition~\ref{prop:Commutativity} to yield
\[
i_{D'}^*(D)([D'\to |D'|]_D)=i_D^*(D')([D\to |D|]_D)
\]
in $\Omega_*(Y)$. Thus
\begin{align*}
D(D'(1_Y^{D, D'}))&=i_{D'}^*(D)(D'(1_Y^{D, D'}))\\
&=i_{D'}^*(D)([D'\to |D'|]_D)\\
&=i_D^*(D')([D\to |D|]_{D'})\\
&=D'(D(1_Y^{D', D})).
\end{align*}

For (7), we may assume as above that $X=Y$ is in $\Sm_k$, irreducible and that $\id_Y$ is in $\sM_*(Y)_{D+D'}\cap \sM(Y)_{D,D'}\cap\sM(Y)_{D',D}$. It suffices to show that $i_*(D(\id_Y))=i'_*(D'(\id_Y))$. Our assumptions imply that $\id_Y$ is in $\sM(Y)_{D}\cap\sM(Y)_{D'}$.

Suppose $Y=|D|$. Then $i=\id_Y$ and $D(\id_Y)=\cn(O_Y(D))(\id_Y)$. Thus, if $|D'|=Y$, we have $i_*(D(\id_Y))=i'_*(D'(\id_Y))$. If $|D'|\neq Y$, then $D'$ is a simple normal crossing divisor on $Y$ and $D'(\id_Y)=[D'\to|D'|]$. Thus $D'$ is in good position with respect to the empty sequence $\0$ and by lemma~\ref{lem:DivClass}, 
\[
i'_*(D'(\id_Y))=\cn(O_Y(D'))(\id_Y)=\cn(O_Y(D))(\id_Y)=i_*(D(\id_Y)).
\]

If both $|D|\neq Y$ and $|D'|\neq Y$, then the same computation as above gives the desired result.
\end{proof}

 \end{document}